\documentclass{article}
\usepackage[T1]{fontenc}
\usepackage{lmodern}
\usepackage{geometry,amsmath,amssymb,mathrsfs,microtype}
\usepackage{enumitem,needspace}
\usepackage[hidelinks]{hyperref}
\hypersetup{pdftitle={Parahoric Spherical Whittaker Functions},
  pdfauthor={Markos Karameris},
  pdfkeywords={Whittaker functions, parahoric subgroups, Hecke algebras, Rankin--Selberg integrals}}
\newcommand{\GL}{\mathrm{GL}}
\newenvironment{proof}{\noindent\textbf{Proof\ }}{\hspace*{\fill}$\Box$\medskip}
\newtheorem{lemma}{Lemma}
\newtheorem{proposition}{Proposition}
\newtheorem{corollary}{Corollary}
\newtheorem{definition}{Definition}
\newtheorem{remark}{Remark}
\newtheorem{theorem}{Theorem}
\title{Parahoric Spherical Whittaker Functions}
\author{Markos Karameris}
\date{}
\begin{document}
\maketitle

\begin{abstract}
Let $F$ be a non-archimedean local field and let $K_\mu$ be a standard
parahoric subgroup of $\GL_n(F)$. We express the diagonal values of
normalized parahoric Whittaker eigenfunctions with trivial central
character using blockwise Casselman--Shalika formulas with the appropriate
modulus factor. Every irreducible generic Iwahori-spherical representation
has a one-dimensional fixed space for a suitable parahoric, with no
invariants under a strictly larger parahoric. The full Hecke character
of this line determines the representation. We compute its adjacent-block
Hecke eigenvalues and, under additional irreducibility hypotheses,
obtain an explicit finite formula on every Bruhat cell using an opposite induced model,
Whittaker-preserving intertwiners, and Demazure--Whittaker operators.
We apply these formulas to local Rankin--Selberg integrals for
$\GL_n\times\GL_1$ and for compatible parahorics in
$\GL_n\times\GL_m$. In the latter case, the lowest Laurent coefficient
and the equal-rank functional equation determine the integral. Explicit
translations and rescalings then give test vectors for the local
$L$-factor.
\end{abstract}

\section{Introduction and Notation}

Explicit Whittaker functions provide local test vectors for automorphic
$L$-functions. The spherical formulas of Shintani and Casselman--Shalika
\cite{shint,casselman-shalika} describe the maximal compact fixed vector,
while Baruch and the author \cite{bar-kar} compute the Iwahori-fixed
Whittaker function of a Steinberg representation. We study the
intermediate standard parahoric subgroups.

Theorem~\ref{thm:diag} gives the diagonal values of a normalized
parahoric Hecke eigenfunction as a product of Schur Laurent polynomials,
one for each Levi block. For a composition $\mu$, Proposition~\ref{prop:ferrers-fixed-line}
constructs a generic representation induced from Steinberg factors of
lengths $(\mu^+)^t$ whose $K_\mu$-fixed space is a line. This line is
new with respect to parahoric inclusion: no strictly larger parahoric
fixes a nonzero vector. Every irreducible generic Iwahori-spherical
representation occurs. Its full parahoric Hecke character, together
with the canonical partition $\mu$, determines it uniquely
(Corollary~\ref{cor:parahoric-character-classification}). This realizes
the standard segment classification through one-dimensional parahoric
Hecke modules inside the Iwahori-fixed modules.
Under the additional hypotheses of Section~\ref{sec:opposite-induced-model}, an opposite
induced model realizes its normalized Whittaker function.
Theorem~\ref{thm:all-bruhat-cells} then combines the Iwahori formulas of
Brubaker--Buciumas--Bump--Gustafsson \cite{bbbg-whittaker} with block
symmetrization to evaluate it on every Bruhat cell.

Section~\ref{sec:rankin-selberg-gl1} evaluates the
$\GL_n\times\GL_1$ integral from the diagonal formula.
Section~\ref{sec:rankin-selberg-gln-glm} treats $n>m$ when the larger
parahoric is obtained by appending singleton blocks. After reduction to
an equal-rank integral, a unique extremal term determines its lowest
coefficient. The Rankin--Selberg ideal theorem and functional equation
\cite{jpss}, together with the conductor, force the remaining Laurent
polynomial to be a monomial. This yields the integral and explicit
translated test vectors without multiplying the two Whittaker formulas
throughout the full Bruhat cell sum.

Let $F$ be a non-archimedean local field and let $|\cdot|$ be the normalized absolute value on $F$. We denote by $\mathcal{O}$ its ring of integers, by $\mathfrak{p}=\varpi\mathcal{O}$ its maximal ideal, and by $\mathfrak{f}=\mathcal{O}/\mathfrak{p}$ its residue field, of cardinality $q$. Let $G=\GL_n(F)$ and consider the following subgroups of $G$: the center $\mathcal{Z}$, consisting of the scalar matrices $z(t)$ with $t\in F^\times$; the maximal compact subgroup $K=\GL_n(\mathcal{O})$; the standard Borel subgroup $B$ of upper triangular matrices; the maximal unipotent subgroup $N$ of upper triangular unipotent matrices; and $N_{\mathcal{O}}=N\cap K$.

Let $T$ be the diagonal torus. For $\bar{k}=(k_1,\ldots,k_n)\in\mathbb Z^n$, write
$$
\begin{aligned}
d(t_1,\ldots,t_n)&=\operatorname{diag}(t_1,\ldots,t_n),\\
\varpi^{\bar{k}}=d_{\bar{k}}&=d(\varpi^{k_1},\ldots,\varpi^{k_n}).
\end{aligned}
$$
Put $T^+=\{\varpi^{\bar{k}}:k_1\geq\cdots\geq k_n\}$ and $T^0=T\cap K$.
Let $\mathbf{W}\simeq S_n$ be the Weyl group, generated by the simple reflections $s_1,\ldots,s_{n-1}$, and let $w_0$ be its longest element. All Weyl representatives are unsigned permutation matrices. Let $J$ be the standard Iwahori subgroup, namely the inverse image of the upper triangular Borel subgroup of $\GL_n(\mathfrak{f})$ under the reduction map $K\to \GL_n(\mathfrak{f})$. More generally, $J_m$ denotes the standard Iwahori subgroup of $\GL_m(F)$.

Throughout the paper, $\operatorname{Ind}$ denotes normalized smooth parabolic induction. We write $\nu=|\cdot|$ for the normalized absolute-value character of $F^\times$.

The root system is $\Phi=\{\alpha_{i,j}:i\neq j\}$, where
$\alpha_{i,j}(d(t_1,\ldots,t_n))=t_i/t_j$. Its positive, negative, and simple roots are
$$
\Phi^+=\{\alpha_{i,j}:i<j\},\qquad \Phi^-=-\Phi^+,\qquad
\Delta=\{\alpha_i=\alpha_{i,i+1}:1\leq i<n\}.
$$
For the matrix unit $e_{i,j}$, put $x_{i,j}(t)=I+te_{i,j}$, also denoted
$x_\alpha(t)$ for $\alpha=\alpha_{i,j}$. Every $d\in T$ has a unique
expression $d=\varpi^{\lambda(d)}d_0$, with $\lambda(d)\in\mathbb Z^n$
and $d_0\in T^0$. Then
$$
\begin{aligned}
dx_\alpha(t)d^{-1}&=x_\alpha\bigl(\alpha(d)t\bigr)
=x_\alpha\bigl(\alpha(d_0)t\varpi^{\langle\alpha,\lambda(d)\rangle}\bigr),\\
wx_\alpha(t)w^{-1}&=x_{w\alpha}(t).
\end{aligned}
$$

We use both root-theoretic and composition notation for standard parahorics.
For an ordered composition $\mu=(\mu_1,\ldots,\mu_r)$ of $n$, put
$$
m_0=0,\qquad m_a=\mu_1+\cdots+\mu_a,\qquad
C_a=\{m_{a-1}+1,\ldots,m_a\}\quad(1\leq a\leq r).
$$
Define
$$
S_\mu=
\Delta\setminus\{\alpha_{m_1},\ldots,\alpha_{m_{r-1}}\}.
$$
Each block with $\mu_a>1$ contributes a connected component of $S_\mu$ of size $\mu_a-1$. Set
$$
\begin{aligned}
W_\mu=W_{S_\mu}&=\langle s_i:\alpha_i\in S_\mu\rangle
\simeq S_{\mu_1}\times\cdots\times S_{\mu_r},\\
K_\mu=K_{S_\mu}&=\bigsqcup_{w\in W_\mu}JwJ.
\end{aligned}
$$
Equivalently, $K_\mu$ is the inverse image in $K$ of the standard parabolic subgroup of $\GL_n(\mathfrak f)$ with Levi block sizes $(\mu_1,\ldots,\mu_r)$.

Conversely, every subset $S\subseteq\Delta$ determines a unique ordered composition $\mu(S)$ by declaring the omitted simple roots to be the block boundaries. When $S$ is fixed, we write
$$
W_S=W_{\mu(S)},\qquad
\Phi_S=\langle S\rangle,\qquad
K_S=K_{\mu(S)}.
$$
When convenient, we identify $S$ with the index set $\{i:\alpha_i\in S\}$.

\section{Auxiliary Decompositions}\label{sec:decompositions}

Normalize the Haar measure on $G$ by
$$
\operatorname{vol}(K_S)=1.
$$
Let $\mathcal{H}(G,K_S)$ be the Hecke algebra of compactly supported, $K_S$-bi-invariant functions on $G$. For $g\in G$, let
$$
T_g=\mathbf{1}_{K_SgK_S}.
$$
We write $e_S=\operatorname{vol}(K_S)^{-1}\mathbf1_{K_S}$ for the
spherical idempotent in $\mathcal H(G,J)$; with our measure, $e_S=\mathbf1_{K_S}$.
The Hecke algebra acts by right convolution on right $K_S$-invariant functions $f:G\to\mathbb{C}$:
$$
(T_gf)(x)=\int_{K_SgK_S}f(xh)\,dh.
$$

\begin{lemma}\label{cosum1}
Suppose that
$$
K_SgK_S=\bigsqcup_{i=1}^m\gamma_iK_S.
$$
If $f$ is right $K_S$-invariant, then
$$
(T_gf)(x)=\sum_{i=1}^m f(x\gamma_i).
$$
\end{lemma}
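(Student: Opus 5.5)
The plan is to split the integral over the double coset into integrals over the given disjoint left cosets, and to evaluate each of those using right $K_S$-invariance of $f$ together with the normalization $\operatorname{vol}(K_S)=1$.

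Since $K_SgK_S=\bigsqcup_{i=1}^m\gamma_iK_S$ is a finite disjoint union of measurable (open compact) sets, additivity of the integral gives
$$
(T_gf)(x)=\int_{K_SgK_S}f(xh)\,dh=\sum_{i=1}^m\int_{\gamma_iK_S}f(xh)\,dh .
$$

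For a single coset, substitute $h=\gamma_ik$ with $k\in K_S$. The Haar measure is left invariant, so $dh=dk$ and
$$
\int_{\gamma_iK_S}f(xh)\,dh=\int_{K_S}f(x\gamma_ik)\,dk .
$$
Right $K_S$-invariance of $f$ gives $f(x\gamma_ik)=f(x\gamma_i)$ for every $k\in K_S$. The integrand is therefore constant, and the integral equals $f(x\gamma_i)\operatorname{vol}(K_S)=f(x\gamma_i)$. Summing over $i$ yields the stated formula.

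No step is a genuine obstacle. The only point to check is that translating $K_S$ on the left preserves volume. This holds because Haar measure on $G$ is left invariant; indeed $\GL_n(F)$ is unimodular, so either invariance would suffice. The finiteness of $m$ is automatic, since $K_SgK_S$ is compact and is covered by the open cosets $\gamma_iK_S$.
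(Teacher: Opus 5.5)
Your proof is correct and is the same as the paper's. The paper argues in two lines that each coset $\gamma_iK_S$ has volume $\operatorname{vol}(K_S)=1$, and that $h\mapsto f(xh)$ is constant equal to $f(x\gamma_i)$ on that coset; your substitution $h=\gamma_ik$ and your appeal to left invariance of Haar measure just spell out the volume step.
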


\begin{proof}
Each right coset has volume $1$, and $f$ is constant on it, so
$\int_{\gamma_iK_S}f(xh)\,dh=f(x\gamma_i)$. Summing proves the formula.
\end{proof}

\Needspace{5\baselineskip}
The following standard Iwahori double-coset decomposition will be used below.

\begin{lemma}\label{cosrep}
Let $w\in\mathbf{W}$. Fix an ordering of the inversion set
$$
\Phi(w)=\{\alpha\in\Phi^+:w^{-1}\alpha\in\Phi^-\}.
$$
Then $|\Phi(w)|=\ell(w)$ and
$$
JwJ=\bigsqcup_{(t_\alpha)\in\mathfrak{f}^{\ell(w)}}
\left(\prod_{\alpha\in\Phi(w)}x_\alpha(\widetilde{t}_\alpha)\right)wJ,
$$
where $\widetilde{t}_\alpha\in\mathcal{O}$ is any fixed lift of $t_\alpha\in\mathfrak{f}$ and the product is taken in the chosen order.
\end{lemma}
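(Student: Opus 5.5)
The plan is to use the Iwahori factorization of $J$ and reduce everything to a count of cosets. Write $N^-_1$ for the lower unipotent matrices with entries in $\mathfrak p$, $T^0$ as before, and $N_{\mathcal O}$. Then $J=N_{\mathcal O}T^0N^-_1$, and the product may be taken in any order. Since $w$ normalizes $T^0$, we get
$$
JwJ=N_{\mathcal O}\,w\,J .
$$
Now split $N_{\mathcal O}=N'_w N''_w$. Here $N'_w=\prod_{\alpha\in\Phi(w)}x_\alpha(\mathcal O)$ and $N''_w=\prod_{\alpha\in\Phi^+\setminus\Phi(w)}x_\alpha(\mathcal O)$. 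This is valid because both $\Phi(w)$ and its complement in $\Phi^+$ are closed sets of positive roots. For $\alpha\notin\Phi(w)$ we have $w^{-1}\alpha\in\Phi^+$, so
$$
w^{-1}x_\alpha(\mathcal O)w=x_{w^{-1}\alpha}(\mathcal O)\subseteq J,
$$
and therefore $N''_w w J=wJ$. This gives $JwJ=N'_w wJ$.

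The identity $|\Phi(w)|=\ell(w)$ is the standard count of inversions. I will quote it, or prove it by induction using the fact that $s_i$ permutes $\Phi^+\setminus\{\alpha_i\}$.

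Next we check that the cosets depend only on the residues $t_\alpha\in\mathfrak f$. Suppose $u,u'\in N'_w$. Then $uwJ=u'wJ$ exactly when $w^{-1}u^{-1}u'w\in J$. Since $u^{-1}u'\in N'_w$ and conjugation by $w^{-1}$ sends the root groups of $\Phi(w)$ to negative root groups, this element lies in $N^-\cap J=N^-_1$. That condition holds exactly when $u^{-1}u'\in N'_w(\mathfrak p)$, meaning all its root coordinates lie in $\mathfrak p$.

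The subgroup $N'_w(\mathfrak p)$ is normal in $N'_w$. This is the kernel of reduction modulo $\mathfrak p$ restricted to this subgroup, and it holds because $\Phi(w)$ is closed. So the right cosets $uwJ$ with $u\in N'_w$ are in bijection with $N'_w/N'_w(\mathfrak p)$, which is the group $N'_w(\mathfrak f)$.

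It remains to show that the ordered products $\prod_\alpha x_\alpha(\widetilde t_\alpha)$, for $(t_\alpha)\in\mathfrak f^{\ell(w)}$, form a set of representatives of $N'_w(\mathfrak f)$. This is where I expect the main technical point to lie. It is the claim that, for a closed set of positive roots and any fixed ordering, the map $\prod_\alpha\mathfrak f\to N'_w(\mathfrak f)$ given by ordered multiplication is a bijection. The order-independence is needed because the statement allows an arbitrary ordering.

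I would prove the bijection by filtering by height. Let $N'_{\geq h}$ be generated by the root groups $x_\alpha$ with $\alpha\in\Phi(w)$ of height at least $h$. These subgroups are normal, and each quotient $N'_{\geq h}/N'_{\geq h+1}$ is abelian and equal to $\prod_{\mathrm{ht}\,\alpha=h}\mathfrak f$, by the commutator formula $[x_\alpha,x_\beta]\subseteq x_{\alpha+\beta}$. An induction on height then shows that ordered products in any order reach every element; surjectivity follows by peeling off the lowest-height coordinates. Injectivity then follows from the count: both sets have size $q^{\ell(w)}$, since $|N'_w(\mathfrak f)|=q^{|\Phi(w)|}$.

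Combining the three steps gives the stated disjoint decomposition.
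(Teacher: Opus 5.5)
The paper gives no proof of this lemma; it quotes it as the standard Iwahori double-coset decomposition, so there is no argument in the paper to compare yours with. Your proof is the standard one and it is correct. The Iwahori factorization reduces $JwJ$ to $N_{\mathcal O}wJ$. Splitting $N_{\mathcal O}$ along $\Phi(w)=\Phi^+\cap w\Phi^-$ and its complement reduces this further to $N'_wwJ$. The stabilizer computation through $N^-\cap J=N^-_1$ identifies the cosets with $N'_w(\mathfrak f)$. Finally, the height filtration gives bijectivity of ordered products in any order; its successive quotients are central, which is exactly why the ordering does not matter.

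A few steps deserve one more line each.
\begin{itemize}
\item In deriving $JwJ=N_{\mathcal O}wJ$, it is not enough that $w$ normalizes $T^0$. You also need $w^{-1}N^-_1w\subseteq J$, which holds because conjugation by a permutation matrix preserves $I+M_n(\mathfrak p)\subseteq J$.
\item The factorization $N_{\mathcal O}=N'_wN''_w$ must hold on $\mathcal O$-points, not just over $F$. This is the integral form of the product decomposition for the closed sets $\Phi(w)$ and $\Phi^+\setminus\Phi(w)$, which is defined over $\mathbb Z$.
\item The counting step should be made non-circular. Identify $N'_w(R)$, for $R=\mathcal O$ or $\mathfrak f$, with the pattern group of unipotent upper triangular matrices whose off-diagonal support lies in $\Phi(w)$. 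This is a group because $\Phi(w)$ is closed, and it visibly has $q^{|\Phi(w)|}$ elements over $\mathfrak f$. The entries at distance exactly $h$ from the diagonal then give an additive homomorphism on the layer with zero entries at distances $1,\ldots,h-1$, with kernel the next layer. This identifies your quotients $N'_{\geq h}/N'_{\geq h+1}$ with $\mathfrak f^{\#\{\alpha\in\Phi(w):\,\mathrm{ht}\,\alpha=h\}}$ directly, so surjectivity plus the count gives the bijection without assuming it.
\end{itemize}
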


Using the Bruhat decomposition of $K$ modulo $K_S$ together with the Iwasawa decomposition gives the following refinement.

\begin{lemma}\label{decomp}
Let $\mathbf{W}^S$ be the set of minimal-length representatives of $\mathbf{W}/W_S$. Then
$$
G=\bigsqcup_{\lambda\in\mathbb{Z}^n}\bigsqcup_{w\in\mathbf{W}^S}
N\varpi^\lambda wK_S.
$$
\end{lemma}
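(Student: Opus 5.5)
We combine the Iwasawa decomposition with the Bruhat decomposition of $K$ relative to $K_S$. Since $G=BK$ and $B=NT$ with $T=\bigsqcup_{\lambda}\varpi^\lambda T^0$, we get $G=\bigcup_\lambda N\varpi^\lambda K$, because $T^0\subset K$ may be absorbed on the right. Next we decompose $K$. Reduction modulo $\mathfrak p$ maps $K$ onto $\GL_n(\mathfrak f)$, and $K_S$ and $J$ are the preimages of the standard parabolic $P_S(\mathfrak f)$ and of the Borel $B(\mathfrak f)$. The finite Bruhat decomposition $\GL_n(\mathfrak f)=\bigsqcup_{w\in\mathbf W^S}B(\mathfrak f)wP_S(\mathfrak f)$ therefore lifts to $K=\bigsqcup_{w\in\mathbf W^S}JwK_S$. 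By Lemma~\ref{cosrep}, $JwJ=\bigcup \bigl(\prod_{\alpha\in\Phi(w)}x_\alpha(\tilde t_\alpha)\bigr)wJ$, and hence $JwK_S=(N\cap K)\,wK_S$. The only part of $J$ not of this form is $T^0$ together with entries lying in $\mathfrak p$ or in $N$. More cleanly, use the Iwahori factorization $J=N_{\mathcal O}\,T^0\,\bar N_{\mathfrak p}$. Then $w^{-1}T^0w=T^0\subset K_S$ and $w^{-1}\bar N_{\mathfrak p}w\subset K(\mathfrak p)\cdot(\text{stuff})\subset J\subset K_S$. The last inclusion needs care: $w^{-1}x_{ij}(\varpi t)w=x_{w^{-1}(i),w^{-1}(j)}(\varpi t)$ lies in $J$ whatever the sign of the root, because the entry is in $\mathfrak p$. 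So $JwK_S=N_{\mathcal O}wK_S$.

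Combining the two steps, $N\varpi^\lambda N_{\mathcal O}wK_S=N\varpi^\lambda wK_S$, since $\varpi^\lambda N_{\mathcal O}\varpi^{-\lambda}\subset N$. This gives $G=\bigcup_{\lambda,w}N\varpi^\lambda wK_S$.

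The main point, and the expected obstacle, is disjointness. Suppose $n\varpi^\lambda w k=\varpi^{\lambda'}w'k'$ with $k,k'\in K_S$. First we would recover $\lambda$. The Iwasawa torus component is well-defined modulo $T^0$, because $B\cap K=N_{\mathcal O}T^0$. Since $wK_S\subset K$, we get $\lambda=\lambda'$.

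Then $w'^{-1}\varpi^{-\lambda}n\varpi^{\lambda}w\in K_S\cdot K_S^{-1}=K_S$. Write $n'=\varpi^{-\lambda}n\varpi^\lambda\in N$. Then $w'^{-1}n'w\in K$, so $n'\in wKw'^{-1}\cap N$. Its entries are the entries of a permuted element of $K$, hence integral, and so $n'\in N_{\mathcal O}$. Therefore $n'w\in N_{\mathcal O}wK_S=JwK_S$, and also $n'w\in w'K_S\subset Jw'K_S$. Disjointness of the lifted finite Bruhat cells then forces $w=w'$ in $\mathbf W^S$.

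Finally, uniqueness of minimal-length representatives gives disjointness in the union over $w\in\mathbf W^S$, which completes the proof.
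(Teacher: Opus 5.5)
Your proof is correct and follows the route the paper indicates: the Iwasawa decomposition combined with the lifted Bruhat decomposition $K=\bigsqcup_{w\in\mathbf W^S}JwK_S$, together with a disjointness argument that the paper leaves implicit. Two small remarks. From $w'^{-1}n'w\in K$ you get $n'\in w'Kw^{-1}$ rather than $wKw'^{-1}$, which is harmless since both equal $K$. Also, your first argument already suffices, because $JwK_S=JwJK_S$ and Lemma~\ref{cosrep} then gives $JwK_S=N_{\mathcal O}wK_S$, so the detour through the Iwahori factorization is unnecessary.
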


The next lemma identifies the negative-root elements in the parahoric.

\begin{lemma}\label{contain}
Let $\alpha\in\Phi^-$. Then
$$
x_\alpha(t)\in K_S
$$
if and only if
$$
t\in
\begin{cases}
\mathcal{O},&\alpha\in\Phi_S^-,\\
\mathfrak{p},&\alpha\in\Phi^-\setminus\Phi_S^-.
\end{cases}
$$
Equivalently, $t\in\mathcal{O}$ if some element of $W_S$ sends $\alpha$ to a positive root, and $t\in\mathfrak{p}$ if $W_S\alpha\subseteq\Phi^-$.
\end{lemma}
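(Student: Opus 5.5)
The plan is to read membership off the reduction map. By the equivalent description, $K_S$ is the inverse image in $K$ of the standard parabolic $P_{\mu(S)}(\mathfrak f)$ of $\GL_n(\mathfrak f)$. Write $\alpha=\alpha_{i,j}$ with $i>j$, so that $x_\alpha(t)=I+te_{i,j}$ has its single off-diagonal entry below the diagonal.

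\emph{Integrality.} First, $x_\alpha(t)\in K$ holds exactly when $t\in\mathcal O$. For $t\in\mathcal O$ the matrix is integral with determinant $1$. For $t\notin\mathcal O$ it is not integral.

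\emph{Reduction.} So assume $t\in\mathcal O$. Then $x_\alpha(t)\in K_S$ if and only if its reduction $I+\bar t e_{i,j}$ lies in $P_{\mu(S)}(\mathfrak f)$. That parabolic consists of the block upper triangular matrices for the composition $\mu(S)$. The matrix $I+\bar t e_{i,j}$ lies in it exactly when either $\bar t=0$, or the position $(i,j)$ falls in the block upper triangular pattern. Since $i>j$, the latter means that $i$ and $j$ lie in the same block $C_a$.

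\emph{Translating to $\Phi_S$.} The condition that $i$ and $j$ lie in the same block $C_a$ is precisely $\alpha_{i,j}\in\Phi_S$. Indeed, $\Phi_S$ is the root subsystem spanned by $S$. Its roots are the $\alpha_{k,l}$ with $k,l$ in a common block, since $\alpha_{k,l}$ for $k<l$ is the sum $\alpha_k+\cdots+\alpha_{l-1}$, which involves only simple roots of $S$ exactly when no boundary index $m_b$ lies in $[k,l-1]$. This gives the two cases: $t\in\mathcal O$ when $\alpha\in\Phi_S^-$, and $\bar t=0$, i.e.\ $t\in\mathfrak p$, otherwise.

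If one prefers not to use the parabolic description, the same result follows from $K_S=\bigsqcup_{w\in W_S}JwJ$. Then $x_\alpha(t)$ with $t\in\mathfrak p$ lies in $J$. For $\alpha\in\Phi_S^-$ and $t\in\mathcal O^\times$, one uses the rank-one identity in $\GL_2$ showing $x_{-\beta}(u)\in Js_\beta J$. Conversely, if $x_\alpha(t)\in JwJ$ with $w\in W_S$, then reduction mod $\mathfrak p$ lands in $\bar B w\bar B$, so the Bruhat cell determined by the $2\times2$ minor forces $w=s_\alpha$ up to the $\bar B$-cell structure, and $s_\alpha\in W_S$.

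\emph{Equivalent formulation.} I must show that $\alpha\in\Phi_S^-$ if and only if some element of $W_S$ sends $\alpha$ to a positive root.
\begin{itemize}
\item If $\alpha\in\Phi_S^-$, then the reflection $s_\alpha\in W_S$ sends $\alpha$ to $-\alpha\in\Phi^+$.
\item If $\alpha=\alpha_{i,j}$ with $i>j$ in different blocks, then $W_S$ permutes indices within blocks. Hence $w\alpha=\alpha_{w(i),w(j)}$, and $w(i)$ remains in a strictly later block than $w(j)$, so $w(i)>w(j)$ and $w\alpha$ stays negative. Thus $W_S\alpha\subseteq\Phi^-$.
\end{itemize}

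No step is a real obstacle. The only point needing care is the identification of $\Phi_S$ with the pairs of indices in a common block, which is where the definition of $S_\mu$ via omitted boundary roots enters.
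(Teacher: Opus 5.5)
Your proposal is correct and follows the paper's route. Integrality forces $t\in\mathcal O$, and reducing modulo $\mathfrak p$ into the standard parabolic $P_{\mu(S)}(\mathfrak f)$ shows that the lower-triangular entry survives exactly when $i,j$ lie in a common block, i.e.\ when $\alpha\in\Phi_S$. You additionally spell out the identification of $\Phi_S$ with same-block index pairs and the $W_S$-orbit reformulation, which the paper leaves implicit; the alternative argument via $\bigsqcup_{w\in W_S}JwJ$ is not needed.
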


\begin{proof}
If $x_\alpha(t)\in K_S\subseteq K$, then necessarily $t\in\mathcal{O}$. Reducing modulo $\mathfrak{p}$, the image of $x_\alpha(t)$ lies in the standard parabolic subgroup corresponding to $S$. A negative root subgroup occurs in this parabolic precisely when its root belongs to the Levi root system $\Phi_S$. Hence, if $\alpha\in\Phi_S^-$, every $t\in\mathcal{O}$ is allowed, while if $\alpha\notin\Phi_S^-$, the reduction must be trivial, which is equivalent to $t\in\mathfrak{p}$.
\end{proof}

\begin{definition}
We call $d\in T$ fundamental if there exists $i\in\{1,\ldots,n-1\}$ such that
$$
\langle\alpha_j,\lambda(d)\rangle=\delta_{ij}
$$
for every $j\in\{1,\ldots,n-1\}$. Thus, up to multiplication by an element of $T^0\mathcal{Z}$, the fundamental elements are
$$
d_i=\varpi^{(\underbrace{1,\ldots,1}_{i},0,\ldots,0)}.
$$
\end{definition}

For $w\in\mathbf{W}$, write $d^w=wdw^{-1}$.

\begin{proposition}\label{diag_decomp}
Let $d=z d_i$ with $z\in\mathcal Z$ and $1\leq i<n$, and let
$$
\mathcal{O}_S(d)=\{d^w:w\in W_S\}
$$
be its set of distinct $W_S$-conjugates. For each $d'\in\mathcal{O}_S(d)$, let $\mathcal{R}(d')$ be a set of representatives for
$$
N_{\mathcal O}/N_{\mathcal O}(d'),\qquad
N_{\mathcal O}(d'):=N_{\mathcal O}\cap d'Kd'^{-1}.
$$
Then
$$
K_SdK_S=\bigsqcup_{d'\in\mathcal{O}_S(d)}
\bigsqcup_{u\in\mathcal{R}(d')}ud'K_S.
$$
\end{proposition}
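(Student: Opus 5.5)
We prove the decomposition in three steps: show that each coset $ud'K_S$ lies in $K_SdK_S$, show that every coset in $K_SdK_S$ has this form, and show that the listed cosets are pairwise distinct. Since $z$ is central we may take $d=d_i$. Throughout, write $\lambda=\lambda(d_i)=(1^i,0^{n-i})$ and let $P_S\subseteq\GL_n(\mathfrak f)$ be the standard parabolic with Levi $L_S$.

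\emph{Containment.} Every $W_S$-conjugate $d'=wd_iw^{-1}$ with $w\in W_S\subseteq K_S$ lies in $K_Sd_iK_S$. Also $N_{\mathcal O}\subseteq J\subseteq K_S$. Hence every $ud'K_S$ lies in $K_Sd_iK_S$.

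\emph{Exhaustion.} We first reduce the left factor. Write $K_S=J W_S J$ and use the Iwahori factorization $J=N_{\mathcal O}\,T^0\,\bar N_{\mathfrak p}$, where $\bar N_{\mathfrak p}$ is the lower unipotent group with entries in $\mathfrak p$. Conjugating by $d_i^{-1}$ scales a negative root entry $x_{\alpha}(t)$ by $\varpi^{-\langle\alpha,\lambda\rangle}$, and $\langle\alpha,\lambda\rangle\in\{-1,0\}$ for $\alpha<0$. So $d_i^{-1}\bar N_{\mathfrak p}d_i\subseteq\bar N_{\mathcal O}$, and by Lemma~\ref{contain} we need this inside $K_S$. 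That fails for negative roots outside $\Phi_S$ with pairing $-1$.

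A cleaner route, which we adopt, works mod $\mathfrak p$. The map $k\mapsto kd_iK_S$ identifies $K_Sd_iK_S/K_S$ with $K_S/(K_S\cap d_iK_Sd_i^{-1})$. Using the Levi decomposition of $K_S$ (reduction to $P_S$), write $K_S=N_{\mathcal O}\cdot\widetilde L\cdot(\text{kernel-type part})$, where $\widetilde L$ consists of Levi block matrices over $\mathcal O$. Using the Bruhat decomposition of each block $\GL_{\mu_a}(\mathcal O)$ relative to the stabilizer of the $i$-split, we write any $k\in K_S$ as $k=u\,w\,k'$. Here $u\in N_{\mathcal O}$, $w\in W_S$, and $k'\in K_S$ satisfies $d'^{-1}k'd'\in K_S$ for $d'=wd_iw^{-1}$.

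The key computation is that for $d'=\varpi^{\lambda'}$ with $\lambda'$ a $0/1$ vector, the group $K_S\cap d'K_Sd'^{-1}$ together with $N_{\mathcal O}$ generates $K_S$ once $W_S$ is included. Concretely, the group $K_S\cap d'K_Sd'^{-1}$ contains $T^0$, all $x_\alpha(\mathcal O)$ for $\alpha>0$ with $\langle\alpha,\lambda'\rangle\ge0$, and all $x_\alpha(t)$ for $\alpha<0$ with $t$ in the range allowed by Lemma~\ref{contain} after conjugation. Choosing $w$ minimal so that $\lambda'$ is "$W_S$-antidominant within blocks" makes every negative root of $\Phi_S$ lie in this intersection.

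\emph{Disjointness and counting.} The cosets $ud'K_S$ for distinct $d'$ are disjoint. Indeed, the $T$-part of $N\varpi^{\lambda'}wK_S$ in Lemma~\ref{decomp} recovers $\lambda'$, since each $d'$ lies in $N\varpi^{\lambda'}K_S$ with $w=1$. For fixed $d'$, we have $ud'K_S=u'd'K_S$ if and only if $d'^{-1}u^{-1}u'd'\in K_S\cap N=N_{\mathcal O}$, that is, $u^{-1}u'\in N_{\mathcal O}\cap d'N_{\mathcal O}d'^{-1}$. We must check that this equals $N_{\mathcal O}(d')=N_{\mathcal O}\cap d'Kd'^{-1}$, which holds because $d'^{-1}(\cdot)d'$ of an upper unipotent element stays upper unipotent and $K\cap N=N_{\mathcal O}$.

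\emph{Main obstacle.} The exhaustion step is the main obstacle: showing that every coset $kd_iK_S$ has a representative $ud'$. If the Levi-block Bruhat argument becomes messy, we fall back on counting. Compute $[K_S:K_S\cap d_iK_Sd_i^{-1}]$ as the number of $\mathfrak f$-points of $P_S/(P_S\cap Q)$, where $Q$ is the parabolic stabilizing the $i$-dimensional flag. This equals $\sum_{d'}q^{\#\{\alpha>0:\langle\alpha,\lambda'\rangle=1\}}=\sum_{d'}|\mathcal R(d')|$, by a Bruhat-cell count of $P_S$-orbits on $\GL_n(\mathfrak f)/Q$ meeting $P_S$. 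Combined with disjointness, this count forces equality.
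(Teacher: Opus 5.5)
Your containment and disjointness arguments are fine. Identifying $K_Sd_iK_S/K_S$ with $K_S/(K_S\cap d_iK_Sd_i^{-1})$ is also a workable alternative to the paper's root-subgroup manipulation. But exhaustion, which is the whole content of the proposition, is not established, and the claims you offer for it are false as written.

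\begin{itemize}
\item The obstruction you name does not exist. For $\alpha<0$ one has $\varpi^{-\langle\alpha,\lambda\rangle}\in\{1,\varpi\}$, so $d_i^{-1}\bar N_{\mathfrak p}d_i\subseteq\bar N_{\mathfrak p}\subseteq J$. What actually fails for the dominant $d_i$ is passing $x_\alpha(\mathcal O)$ across $d_i$ when $\alpha\in\Phi_S^+$ and $\langle\alpha,\lambda\rangle=1$, since then $d_i^{-1}x_\alpha(t)d_i=x_\alpha(\varpi^{-1}t)$. This is why the paper works with the block-antidominant conjugate $d^{w_S^{-1}}$. For that element the $\Phi_S^+$ root groups pass through into $N_{\mathcal O}$. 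The lower part of $J$ is conjugated into $K_S$ by Lemma~\ref{contain} and the minuscule bound $\geq-1$. The remaining positive root groups are normalized by $W_S$.
\item Your ``key computation'' has the same sign reversal. For $\alpha>0$, $x_\alpha(t)\in d'K_Sd'^{-1}$ if and only if $t\in\varpi^{\langle\alpha,\lambda'\rangle}\mathcal O$. So only roots with $\langle\alpha,\lambda'\rangle\le 0$ contribute all of $x_\alpha(\mathcal O)$. Likewise, block-antidominance of $\lambda'$ puts the \emph{positive}, not the negative, roots of $\Phi_S$ fully into $K_S\cap d'K_Sd'^{-1}$.
\item From $k=uwk'$ you get $kd_i=ud'w\,(d_i^{-1}k'd_i)$. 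The condition you need is therefore $d_i^{-1}k'd_i\in K_S$, not $d'^{-1}k'd'\in K_S$.
\item Saying these subgroups ``generate'' $K_S$ is not enough; you need a product decomposition.
\end{itemize}

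The missing statement is
\[
K_S=\bigcup_{w\in W_S}N_{\mathcal O}\,w\,\bigl(K_S\cap d_iK_Sd_i^{-1}\bigr).
\]
It can be proved along your lines, as follows.
\begin{itemize}
\item $K_S\cap d_iK_Sd_i^{-1}$ contains the principal congruence subgroup. It is the full preimage of $P_S(\mathfrak f)\cap\bar Q(\mathfrak f)$, where $\bar Q$ is the lower block-triangular parabolic of type $(i,n-i)$.
\item The Levi Bruhat decomposition $L_S(\mathfrak f)=\bigcup_{w\in W_S}B_L(\mathfrak f)\,w\,\bar B_L(\mathfrak f)$ has $\bar B_L\subseteq\bar Q$. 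Combined with $P_S=U_SL_S$ and $T(\mathfrak f)\subseteq\bar Q(\mathfrak f)$, it gives $P_S(\mathfrak f)=\bigcup_{w\in W_S}U(\mathfrak f)\,w\,(P_S\cap\bar Q)(\mathfrak f)$.
\item Lift $U(\mathfrak f)$ to $N_{\mathcal O}$. Then $k=uwk'$ gives $kd_iK_S=u\,d^{w}K_S$.
\end{itemize}

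Your counting fallback does not bypass this. The identity $|P_S/(P_S\cap\bar Q)|=\sum_{d'}|\mathcal R(d')|$ holds because the $P_S$-orbit of the base point in $\GL_n(\mathfrak f)/\bar Q$ is exactly the union of the cells $Uw\bar Q/\bar Q$, $w\in W_S$. Each such cell has size $q^{\#\{\alpha>0:\langle\alpha,w\lambda\rangle=1\}}$. That is the same factorization, so asserting the count amounts to asserting exhaustion. The parabolic must also be the lower one. With the upper block-triangular $Q\supseteq B$, the case $S=\emptyset$ would give index $1$ instead of $q^{i(n-i)}$.

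Once repaired, your mod-$\mathfrak p$ route is a clean alternative to the paper's argument, and it explains the orbit count directly.
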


\begin{proof}
Let $w_S$ be the maximum length element of $W_S$. Note that for any $\alpha>0$, either $W_S\alpha>0$ or $w_S\alpha<0$. Now
$$
\begin{aligned}
K_SdK_S&=K_Sd^{w_S^{-1}}K_S=JW_SJd^{w_S^{-1}}K_S\\
&=N_{\mathcal O}W_SJd^{w_S^{-1}}K_S.
\end{aligned}
$$
If $\alpha>0$, then
$$
w_S^{-1}d^{-1}w_S
x_{-\alpha}(\varpi t)
w_S^{-1}dw_S =
x_{-\alpha}
\left(
t\varpi^{1+\langle w_S\alpha,\lambda(d)\rangle}
\right),
$$
with $t\in\mathcal O$. If $w_S\alpha>0$, then
$\langle w_S\alpha,\lambda(d)\rangle\geq0$, so the parameter lies in
$\mathfrak p$. Otherwise $\alpha\in\Phi_S^+$ and
$\langle w_S\alpha,\lambda(d)\rangle\geq-1$ by the minuscule property
of $d_i$, so the parameter lies in $\mathcal O$.
Lemma~\ref{contain} gives membership in $K_S$ in both cases.
It follows that
$$
\left(d^{w_S^{-1}}\right)^{-1}
N_{\mathfrak p}^{-}
d^{w_S^{-1}}
\subseteq K_S.
$$
We have thus shown that
$$
K_SdK_S =
N_{\mathcal O}W_SN_{\mathcal O}
d^{w_S^{-1}}K_S.
$$
Now, for any $\alpha>0$, there are again two cases:
\begin{enumerate}
\item If $W_S\alpha>0$, then
$$
w_sx_\alpha(t)w_s^{-1}
=
x_{w_s\alpha}(t)
\in N_{\mathcal O}
$$
for every $w_s\in W_S$.
\item If $w_S\alpha<0$, then
$$
w_S^{-1}d^{-1}w_S
x_\alpha(t)
w_S^{-1}dw_S
=
x_\alpha
\left(
t\varpi^{-\langle w_S\alpha,\lambda(d)\rangle}
\right)
\in N_{\mathcal O},
$$
since
$
\langle-w_S\alpha,\lambda(d)\rangle\geq0.
$
\end{enumerate}
The root-subgroup factorization gives
$
N_{\mathcal O} =
\mathcal N_{w_S}^{+}\mathcal N_{w_S}^{-},
$
where
$
\mathcal N_{w_S}^{+} =
\left\langle
x_\alpha(\mathcal O)
\right\rangle_{
\substack{\alpha\in\Phi^+\\ w_S^{-1}\alpha>0}
}
$
and
$
\mathcal N_{w_S}^{-} =
\left\langle
x_\alpha(\mathcal O)
\right\rangle_{
\substack{\alpha\in\Phi^+\\ w_S^{-1}\alpha<0}
}.
$
The two cases give
$$
W_S\mathcal N_{w_S}^{+}=\mathcal N_{w_S}^{+}W_S,
\qquad
\mathcal N_{w_S}^{-}d^{w_S^{-1}}\subseteq d^{w_S^{-1}}N_{\mathcal O}.
$$
Consequently,
$$
\begin{aligned}
K_SdK_S
&=N_{\mathcal O}W_S\mathcal N_{w_S}^{+}\mathcal N_{w_S}^{-}
d^{w_S^{-1}}K_S\\
&\subseteq N_{\mathcal O}\mathcal N_{w_S}^{+}W_Sd^{w_S^{-1}}N_{\mathcal O}K_S
=N_{\mathcal O}W_Sd^{w_S^{-1}}K_S\\
&=N_{\mathcal O}W_SdK_S
=\bigcup_{w_s\in W_S}N_{\mathcal O}d^{w_s}K_S.
\end{aligned}
$$
The reverse inclusion follows from $N_{\mathcal O},W_S\subseteq K_S$.
Finally, for $x_1,x_2\in N_{\mathcal O}$, we have
$$
x_1d^{w_s}K_S=x_2d^{w_s}K_S
$$
if and only if
$$
x_2^{-1}x_1
\in
N_{\mathcal O}
\cap
d^{w_s}K_S(d^{w_s})^{-1}.
$$
Since $K_S\cap N=N_{\mathcal O}$ and diagonal conjugation preserves
$N$, this stabilizer is $N_{\mathcal O}(d^{w_s})$.
Distinct elements of $\mathcal O_S(d)$ have distinct valuation vectors;
the uniqueness of the diagonal valuation in the Iwasawa decomposition
therefore makes their $N_{\mathcal O}$-orbits disjoint. Taking the
representatives $\mathcal R(d')$ gives the asserted disjoint union.
\end{proof}

\section{Support and Diagonal Values}

Fix a nontrivial additive character $\psi_F:F\to\mathbb{C}^\times$ that is trivial on $\mathcal{O}$ and nontrivial on $\mathfrak{p}^{-1}$. We use the corresponding nondegenerate character of $N$, also denoted by $\psi$, defined by
$$
\psi(n)=\psi_F\left(\sum_{i=1}^{n-1}n_{i,i+1}\right).
$$
Let $W$ be a right $K_S$-invariant Whittaker function, so that
$$
W(ng)=\psi(n)W(g)
$$
for $n\in N$ and $g\in G$.

For a root $\beta$, write
$$
W_S\beta<0
$$
to mean that $w_s\beta\in\Phi^-$ for every $w_s\in W_S$.

\begin{lemma}\label{supportlemma}
Let $d\in T$ and $w\in\mathbf{W}^S$. Then $W(dw)=0$ unless, for every $\alpha\in\Delta$,
$$
\langle\alpha,\lambda(d)\rangle\geq
\begin{cases}
-1,&W_Sw^{-1}\alpha<0,\\
0,&\text{otherwise}.
\end{cases}
$$
\end{lemma}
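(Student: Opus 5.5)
The plan is the standard support argument: find a unipotent element that the right $K_S$-invariance lets us absorb, but on which $\psi$ is nontrivial. Fix $\alpha=\alpha_i\in\Delta$ and $t\in\mathcal O$. Consider
$$
x_\alpha(t)\,dw=dw\cdot w^{-1}d^{-1}x_\alpha(t)dw
= dw\cdot x_{w^{-1}\alpha}\bigl(\alpha(d)^{-1}t\bigr).
$$
Since $W$ is right $K_S$-invariant, whenever $x_{w^{-1}\alpha}(\alpha(d)^{-1}t)\in K_S$ we obtain
$$
\psi_F(t)\,W(dw)=W(x_\alpha(t)dw)=W(dw).
$$
Hence $W(dw)=0$ as soon as some admissible $t$ has $\psi_F(t)\neq1$. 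Because $\psi_F$ is trivial on $\mathcal O$ and nontrivial on $\mathfrak p^{-1}$, the useful choice is $t\in\varpi^{-1}\mathcal O$ with $\psi_F(t)\ne 1$. So I will take $t=\varpi^{-1}u$ with $u\in\mathcal O$ chosen so that $\psi_F(\varpi^{-1}u)\neq1$, rather than $t\in\mathcal O$. The argument will then show that the failure of the stated inequality forces $x_{w^{-1}\alpha}(\alpha(d)^{-1}\varpi^{-1}u)\in K_S$.

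Write $k=\langle\alpha,\lambda(d)\rangle$. The parameter of the root element has valuation $-k-1$, up to a unit. I split into two cases according to the sign of the root $\beta=w^{-1}\alpha$.

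\emph{Case $\beta>0$.} Here $x_\beta(s)\in K_S$ iff $s\in\mathcal O$, since $K_S\cap N=N_{\mathcal O}$. This holds when $-k-1\ge0$, i.e.\ $k\le-1$. Because $w\in\mathbf W^S$ is a minimal representative, $w$ sends $\Phi_S^+$ to positive roots. For $\beta>0$, the condition $W_S\beta<0$ fails trivially, since the identity keeps $\beta$ positive. So the required bound is $k\ge0$, and the case gives vanishing exactly when $k<0$, as claimed.

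\emph{Case $\beta<0$.} By Lemma~\ref{contain}, $x_\beta(s)\in K_S$ iff $s\in\mathcal O$ when $\beta\in\Phi_S^-$, and iff $s\in\mathfrak p$ when $W_S\beta<0$. In the second subcase the condition is $-k-1\ge1$, i.e.\ $k\le-2$; so vanishing occurs whenever $k<-1$, matching the bound $-1$. In the first subcase the condition is $k\le-1$, giving the bound $0$.

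The main point to check is the first subcase: we need that $\beta\in\Phi_S^-$ is equivalent to ``not $W_S\beta<0$'' for negative $\beta$. This is exactly the dichotomy in Lemma~\ref{contain}, where some element of $W_S$ makes $\beta$ positive iff $\beta\in\Phi_S$. So the case split matches the statement's dichotomy and the proof closes.
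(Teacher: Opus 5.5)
Your proposal is correct and essentially the paper's own argument. Like the paper, you take $y\in\mathfrak p^{-1}$ with $\psi_F(y)\neq1$, move $x_\alpha(y)$ past $dw$ to $x_{w^{-1}\alpha}$ with a parameter of valuation $-1-\langle\alpha,\lambda(d)\rangle$, and use Lemma~\ref{contain} to separate the cases $\beta>0$ or $\beta\in\Phi_S^-$ (bound $0$) from $W_S\beta<0$ (bound $-1$).
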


\begin{proof}
Fix $\alpha\in\Delta$ and choose $y\in\mathfrak{p}^{-1}$ such that $\psi_F(y)\neq1$. Put $\beta=w^{-1}\alpha$. Then
$$
\psi_F(y)W(dw)
=W(x_\alpha(y)dw)
=W\left(dw\,x_\beta\left(\alpha(d)^{-1}y\right)\right).
$$
The parameter on the right has valuation
$$
-1-\langle\alpha,\lambda(d)\rangle.
$$
If $\beta\in\Phi^+$, then $x_\beta(t)\in K_S$ for every $t\in\mathcal{O}$. If $\beta\in\Phi_S^-$, the same conclusion follows from Lemma~\ref{contain}. Thus, in either of these cases, if
$$
\langle\alpha,\lambda(d)\rangle<0,
$$
the rightmost root element belongs to $K_S$, forcing $W(dw)=0$.

It remains to consider $\beta\in\Phi^-\setminus\Phi_S^-$. This is equivalent to $W_S\beta<0$. By Lemma~\ref{contain}, $x_\beta(t)\in K_S$ whenever $t\in\mathfrak{p}$. Hence, if
$$
\langle\alpha,\lambda(d)\rangle<-1,
$$
the rightmost root element again belongs to $K_S$, and $W(dw)=0$.
\end{proof}

\begin{definition}
For $w\in\mathbf{W}^S$, define
$$
T_w^+=\left\{d\in T:
\langle\alpha,\lambda(d)\rangle\geq
\begin{cases}
-1,&W_Sw^{-1}\alpha<0,\\
0,&\text{otherwise}
\end{cases}
\text{ for every }\alpha\in\Delta\right\}.
$$
In particular,
$$
T_1^+=T^+T^0.
$$
\end{definition}

Retain the fundamental elements $d_i$ from Section~\ref{sec:decompositions}
and put $d_n=\varpi I_n$.

We consider an ordered partition
$$
n=n_1+\cdots+n_k.
$$
Put
$$
m_0=0,
\qquad
m_\ell=n_1+\cdots+n_\ell
$$
for $1\leq\ell\leq k$. The corresponding standard parahoric is determined by
$$
S=\{1,\ldots,n-1\}\setminus\{m_1,\ldots,m_{k-1}\}.
$$
For $f=(f_1,\ldots,f_n)$, write
$$
f^{(\ell)}=(f_{m_{\ell-1}+1},\ldots,f_{m_\ell}).
$$

Fix a right $K_S$-invariant Whittaker eigenfunction with trivial
central character, so $W(zg)=W(g)$ for $z\in\mathcal Z$, satisfying
$$
W\in\operatorname{Ind}_N^G(\psi),
\qquad W(1)=1,
\qquad T_{d_i}W=\lambda_iW\quad(1\leq i<n).
$$
Set $\Lambda_i=\lambda_iq^{-i(n-i)/2}$ for $1\leq i<n$ and
$\Lambda_0=\Lambda_n=1$, the latter because $d_n$ is central.
The boundary eigenvalues $\Lambda_{m_\ell}$ are nonzero: the invertible
Iwahori translation for $d_{m_\ell}$ commutes with the projection $e_S$
onto $K_S$-fixed vectors, because this diagonal is central in the Levi.
Its restriction to this subspace is therefore invertible and is a
nonzero scalar multiple of $T_{d_{m_\ell}}$.
For each block $1\leq\ell\leq k$, choose an $n_\ell$-tuple
$$
z^{(\ell)}=(z_{m_{\ell-1}+1},\ldots,z_{m_\ell})
$$
satisfying
$$
\sigma_j\left(z^{(\ell)}\right)
=\frac{\Lambda_{m_{\ell-1}+j}}{\Lambda_{m_{\ell-1}}},
\qquad 1\leq j\leq n_\ell.
$$
Here $\sigma_j$ denotes the $j$-th elementary symmetric polynomial, and we set $f_{m_k+1}=f_{n+1}=0$.

\begin{theorem}[Diagonal values]\label{thm:diag}
For $d=\varpi^f$, one has
$$
W(d) = 
\begin{cases}
\displaystyle
\delta_B^{1/2}(\varpi^f)
\prod_{\ell=1}^k
s_{f^{(\ell)}}\left(z^{(\ell)}\right),
&
d\in T_1^+,
\\[1em]
0,
&
d\notin T_1^+,
\end{cases}
$$
Here $s_\alpha$ is the Schur Laurent polynomial for a dominant integral signature $\alpha$; it is an ordinary Schur polynomial when $\alpha$ is a partition.
\end{theorem}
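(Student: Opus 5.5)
Restrict $W$ to the diagonal. The vanishing statement is the case $w=1$ of Lemma~\ref{supportlemma}. For $w=1$ and $\alpha=\alpha_i\in\Delta$, the root $w^{-1}\alpha=\alpha_i$ is positive, so the condition $W_S\alpha_i<0$ never holds. Hence $W(\varpi^f)=0$ unless $f_1\geq\cdots\geq f_n$. Since $W$ is right $T^0$-invariant (as $T^0\subseteq K_S$), this shows $W$ vanishes off $T_1^+=T^+T^0$. It remains to compute $W(\varpi^f)$ for dominant $f$. Trivial central character lets us shift $f$ by multiples of $(1,\ldots,1)$, which matches $\delta_B^{1/2}$ being trivial on $\mathcal Z$ and the Schur factors scaling by $\prod_\ell\sigma_{n_\ell}(z^{(\ell)})=\Lambda_{m_\ell}/\Lambda_{m_{\ell-1}}$, which telescopes to $\Lambda_n/\Lambda_0=1$.

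The main step is a recursion from the Hecke operators. For dominant $f$, evaluate $(T_{d_i}W)(\varpi^f)=\lambda_iW(\varpi^f)$ via Lemma~\ref{cosum1} and the coset decomposition of Proposition~\ref{diag_decomp}. This gives a sum over $d'\in\mathcal O_S(d_i)$, that is, over $0/1$ vectors $e$ with $i$ ones, considered modulo $W_S$-permutations inside each block. Each $d'$ is attached to a sum over $u\in\mathcal R(d')$ of $W(\varpi^f u d')=\psi(\varpi^f u\varpi^{-f})W(\varpi^{f+e})$.

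There are two cases. When $\varpi^f u\varpi^{-f}$ has a simple-root entry of valuation $-1$, the character sum over $u$ cancels. When $f+e$ is dominant, that entry is integral, and the inner sum contributes $|\mathcal R(d')|=q^{\langle\rho\text{-type count}\rangle}$. I would check that after the normalization $\Lambda_i=\lambda_iq^{-i(n-i)/2}$ and division by $\delta_B^{1/2}$, these powers of $q$ collapse. The expected result is that $\widetilde W(f)=\delta_B^{-1/2}(\varpi^f)W(\varpi^f)$ satisfies
\[
\Lambda_i\,\widetilde W(f)=\sum_{e}\widetilde W(f+e),
\]
where $e$ runs over $0/1$ vectors with $i$ ones that are weakly decreasing inside each block, and terms with $f+e$ non-dominant are taken to be $0$. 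This is the Pieri-type rule that the Casselman--Shalika argument produces, but restricted to $W_S$-orbits. The counting of $|\mathcal R(d')|$ against $\delta_B^{1/2}$ is the main obstacle. I would first treat $S=\varnothing$ and $S=\Delta$ to calibrate the exponent, then handle the general case block by block, using that $N_{\mathcal O}(d')$ is cut out by root conditions $\langle\alpha,\lambda(d')\rangle$.

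With the recursion in hand, verify that the proposed product satisfies it. Write $e=(e^{(1)},\ldots,e^{(k)})$ with $|e^{(\ell)}|=j_\ell$ and $\sum_\ell j_\ell=i$. By the Pieri rule in each block, $\sum_{e^{(\ell)}}s_{f^{(\ell)}+e^{(\ell)}}(z^{(\ell)})=\sigma_{j_\ell}(z^{(\ell)})s_{f^{(\ell)}}(z^{(\ell)})$, where Schur polynomials with non-dominant signature vanish. There is one subtlety: dominance of $f+e$ across a block boundary may fail even when each block is dominant. I must check that such terms are either absent from the recursion or compensated. Possibly the cross-boundary constraints are exactly those that Lemma~\ref{supportlemma} relaxes, since there the boundary root allows $-1$ and the vanishing at $w=1$ comes separately, so this needs care.

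Summing over the $(j_\ell)$ gives $\sum_{j_1+\cdots+j_k=i}\prod_\ell \Lambda_{m_{\ell-1}+j_\ell}/\Lambda_{m_{\ell-1}}$. This must equal $\Lambda_i$. I expect this to hold only because $W$ is $K_S$-fixed: the eigenvalues are constrained, for instance by the quadratic relations coming from the Hecke algebra. Otherwise the recursion must be set up so that only the block containing the position $i$ moves. I would settle this by using the $T_{d_{m_\ell}}$ recursions, which are invertible by the nonvanishing of the boundary $\Lambda$'s, to reduce to each block separately.

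Finally, uniqueness follows by induction on $\sum_j(f_j-f_n)$ from $W(1)=1$. The recursion with the appropriate $i$ (the last index of a maximal run) expresses $W(\varpi^{f})$ through values at smaller dominant weights and invertible boundary eigenvalues. So $W$ is determined, and since the product formula satisfies the same recursion and normalization, it equals $W$.
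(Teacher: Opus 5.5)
There is a genuine gap in the central step. You misidentify the orbit $\mathcal O_S(d_i)$, so the recursion you write down is not the one the Hecke operators give.

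You describe $\mathcal O_S(d_i)$ as the $0/1$ vectors with $i$ ones taken modulo $W_S$, and in the verification you sum over all splittings $j_1+\cdots+j_k=i$. But $\mathcal O_S(d_i)$ is the $W_S$-orbit of the single vector $(1^i,0^{n-i})$, and $W_S$ only permutes coordinates inside blocks. Write $i=m_{\ell-1}+j$ with $1\le j\le n_\ell$. Then every $\epsilon\in\mathcal O_S(d_i)$ equals $1$ on all of blocks $1,\ldots,\ell-1$, has exactly $j$ ones (in arbitrary positions) in block $\ell$, and is $0$ on later blocks. This is the paper's set $I_i^{n_\ell}$.

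Your version would require $\Lambda_i=\sum_{j_1+\cdots+j_k=i}\prod_\ell\Lambda_{m_{\ell-1}+j_\ell}/\Lambda_{m_{\ell-1}}$. This is false, and no quadratic Hecke relation can rescue it. In the Iwahori case with $n=2$ one has $z_1=\Lambda_1$ and $z_2=\Lambda_1^{-1}$, so your identity at $i=1$ reads $\Lambda_1=\Lambda_1+\Lambda_1^{-1}$, which fails for every value of $\Lambda_1$. The missing observation is exactly why the answer is a product of block Schur functions rather than a single Schur function.

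Once the orbit is read off correctly, your remaining worries dissolve.
\begin{itemize}
\item The $q$-count is $|N_{\mathcal O}/N_{\mathcal O}(\varpi^\epsilon)|=q^{\#\{a<b:\,\epsilon_a=1,\ \epsilon_b=0\}}=q^{i(n-i)/2}\delta_B^{-1/2}(\varpi^\epsilon)$.
\item For dominant $f$ one has $\varpi^fN_{\mathcal O}\varpi^{-f}\subseteq N_{\mathcal O}$, so $\psi$ is trivial on every representative. Your ``cancelling character sum'' case never occurs.
\item Together these give $\Lambda_i\widetilde W(\varpi^f)=\sum_{\epsilon\in\mathcal O_S(d_i)}\widetilde W(\varpi^{f+\epsilon})$.
\item Dominance of $f+\epsilon$ across block boundaries is automatic, since earlier blocks shift by $\mathbf 1$ and later blocks do not move. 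Only in-block non-dominant terms drop out, by Lemma~\ref{supportlemma}.
\item The verification then gives $\prod_{\ell'<\ell}\sigma_{n_{\ell'}}(z^{(\ell')})\cdot\sigma_j(z^{(\ell)})=\Lambda_{m_{\ell-1}}\cdot\Lambda_i/\Lambda_{m_{\ell-1}}=\Lambda_i$ times the product. No constraint on the eigenvalues is needed.
\end{itemize}

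With this correction, your verify-then-uniqueness scheme is a valid alternative to the paper's route. The paper runs the Shintani argument block by block on the rescaled functions $\widehat W_\ell$. It uses the one-term boundary relations for $d_{m_\ell}$ and divides by powers of $\Lambda_{m_{\ell-1}}$, so it needs their nonvanishing. Your induction takes $g=f-(1^i,0^{n-i})$ with $f_i>f_{i+1}$ and inducts on size and then on dominance. It solves for $\widetilde W(\varpi^f)$ without any division.
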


\begin{proof}
The vanishing outside $T_1^+$ follows from the support lemma, so it remains to compute the values for $d=\varpi^f\in T_1^+$.
Put $D_\ell=\sigma_{n_\ell}(z^{(\ell)})$.
Since $\prod_\ell D_\ell=\Lambda_n=1$, the Schur shift identity
$s_{a+c\mathbf1}(z)=(\prod_jz_j)^c s_a(z)$ shows that both sides are
centrally invariant. We may therefore assume $f_n=0$.
For $\epsilon\in\{0,1\}^n$ with $|\epsilon|=i$, first observe that
$$
\left|
N_{\mathcal O}/N_{\mathcal O}(\varpi^\epsilon)
\right| =
q^{\frac{i(n-i)}{2}}
\delta_B^{-1/2}(\varpi^\epsilon).
$$
Dominance gives $\varpi^fN_{\mathcal O}\varpi^{-f}\subseteq N_{\mathcal O}$,
so every unipotent representative contributes the character value $1$.
From Proposition~\ref{diag_decomp}, the relations $T_{d_i}(W)=\lambda_iW$,
and the definition $\widetilde W(d)=\delta_B^{-1/2}(d)W(d)$, we obtain
$$
\lambda_i\widetilde W(\varpi^f) =
q^{\frac{i(n-i)}{2}}
\sum_{\epsilon\in I_i^{n_\ell}}
\widetilde W(\varpi^{f+\epsilon}),
$$
where, for $i=m_{\ell-1}+j$ with $1\leq j\leq n_\ell$, we set
$$
I_i^{n_\ell} =
\left\{
\epsilon
\in
\{1\}^{m_{\ell-1}}
\times
\{0,1\}^{n_\ell}
\times
\{0\}^{n-m_\ell}
:
\sum_{r=m_{\ell-1}+1}^{m_\ell}\epsilon_r=j
\right\}.
$$
Equivalently (with $i=n$ supplied by central invariance),
$$
\Lambda_i\widetilde W(\varpi^f) =
\sum_{\epsilon\in I_i^{n_\ell}}
\widetilde W(\varpi^{f+\epsilon}).
$$
Starting with the first block $n_1$, put $c_1=f_{n_1+1}$ and define
$$
\widehat W_1(\varpi^x) =
\widetilde W
\left(
\varpi^{
(x_1+c_1,\dots,x_{n_1}+c_1,
f_{n_1+1},\dots,f_n)
}
\right).
$$
The first $n_1$ Hecke relations give
$$
\Lambda_j\widehat W_1(\varpi^x) =
\sum_{\substack{\eta\in\{0,1\}^{n_1}\\|\eta|=j}}
\widehat W_1(\varpi^{x+\eta}),
\qquad
1\leq j\leq n_1.
$$
Since $\sigma_j(z^{(1)})=\Lambda_j$, the usual Shintani argument gives
$$
\widehat W_1(\varpi^x) =
s_x\left(z^{(1)}\right)
\widehat W_1(1).
$$
Substituting
$
x=\left(
f_1-c_1,
\dots,
f_{n_1}-c_1
\right)
$ then gives
$$
\widetilde W(\varpi^f) =
s_{
f^{(1)} - f_{n_1+1}\mathbf 1_{n_1}
}
\left(z^{(1)}\right)
\widetilde W
\left(
\varpi^{
(
\underbrace{f_{n_1+1},\dots,f_{n_1+1}}_{n_1},
f_{n_1+1},f_{n_1+2},\dots,f_n
)
}
\right).
$$
If $k=1$, the first-block formula proves the theorem. For the second block ($k\geq2$), put $c_2=f_{m_2+1}$ and, for $x=(x_1,\dots,x_{n_2})$, define
$$
\widehat W_2(\varpi^x) =
\Lambda_{m_1}^{-x_1}
\widetilde W
\left(
\varpi^{
(
\underbrace{x_1+c_2,\dots,x_1+c_2}_{m_1},
x_1+c_2,x_2+c_2,\dots,x_{n_2}+c_2,
f_{m_2+1},\dots,f_n
)
}
\right).
$$
We claim that
$$
\frac{\Lambda_{m_1+j}}{\Lambda_{m_1}}
\widehat W_2(\varpi^x) =
\sum_{\substack{\eta\in\{0,1\}^{n_2}\\|\eta|=j}}
\widehat W_2(\varpi^{x+\eta}),
\qquad
1\leq j\leq n_2.
$$
Indeed, the relation corresponding to $d_{m_1+j}$ adds $1$ to all the first $m_1$ coordinates and to $j$ coordinates in the second block.
If $\eta_1=1$, the resulting diagonal is already of the form occurring in the definition of $\widehat W_2(\varpi^{x+\eta})$.
If $\eta_1=0$, the first $m_1$ coordinates have increased by $1$, while the first coordinate in the second block has not. In this case we use the relation corresponding to the block boundary $d_{m_1}$:
$$
\widetilde W(\varpi^g d_{m_1}) =
\Lambda_{m_1}\widetilde W(\varpi^g).
$$
If $x+\eta$ is non-dominant inside the second block, both corresponding
values vanish by Lemma~\ref{supportlemma}; the boundary shift changes
only the preceding block. Otherwise each summand equals
$\Lambda_{m_1}^{x_1+1}\widehat W_2(\varpi^{x+\eta})$.
Dividing by $\Lambda_{m_1}^{x_1+1}$ gives the claimed recurrence.
Since $\sigma_j(z^{(2)})=\Lambda_{m_1+j}/\Lambda_{m_1}$, the Shintani argument gives
$$
\widehat W_2(\varpi^x) =
s_x\left(z^{(2)}\right)
\widehat W_2(1).
$$
Taking
$
x =
\left(
f_{m_1+1}-f_{m_2+1},
\dots,
f_{m_2}-f_{m_2+1}
\right)
$
therefore gives
$$
\begin{aligned}
&
\widetilde W
\left(
\varpi^{
(
\underbrace{f_{m_1+1},\dots,f_{m_1+1}}_{m_1},
f_{m_1+1},f_{m_1+2},\dots,f_n
)
}
\right)
\\
&\qquad =
\Lambda_{m_1}^{
f_{m_1+1}-f_{m_2+1}
}
s_{
f^{(2)} - f_{m_2+1}\mathbf 1_{n_2}
}
\left(z^{(2)}\right)
\\
&\qquad\qquad\times
\widetilde W
\left(
\varpi^{
(
\underbrace{f_{m_2+1},\dots,f_{m_2+1}}_{m_2},
f_{m_2+1},f_{m_2+2},\dots,f_n
)
}
\right).
\end{aligned}
$$
Repeating the same argument inductively for the $\ell$-th block gives
$$
\begin{aligned}
&
\widetilde W
\left(
\varpi^{
(
\underbrace{f_{m_{\ell-1}+1},\dots,
f_{m_{\ell-1}+1}}_{m_{\ell-1}},
f_{m_{\ell-1}+1},
\dots,f_n
)
}
\right)
\\
&\qquad =
\Lambda_{m_{\ell-1}}^{
f_{m_{\ell-1}+1}-f_{m_\ell+1}
}
s_{
f^{(\ell)} - f_{m_\ell+1}\mathbf 1_{n_\ell}
}
\left(z^{(\ell)}\right)
\\
&\qquad\qquad\times
\widetilde W
\left(
\varpi^{
(
\underbrace{f_{m_\ell+1},\dots,
f_{m_\ell+1}}_{m_\ell},
f_{m_\ell+1},f_{m_\ell+2},\dots,f_n
)
}
\right).
\end{aligned}
$$
Repeating this for all blocks yields
$$
\widetilde W(\varpi^f) =
\prod_{\ell=1}^k
\Lambda_{m_{\ell-1}}^{
f_{m_{\ell-1}+1}-f_{m_\ell+1}
}
s_{
f^{(\ell)} - f_{m_\ell+1}\mathbf 1_{n_\ell}
}
\left(z^{(\ell)}\right).
$$
Since $D_\ell=\Lambda_{m_\ell}/\Lambda_{m_{\ell-1}}$ and $f_{n+1}=0$,
$$
\prod_{\ell=1}^k
\Lambda_{m_{\ell-1}}^{f_{m_{\ell-1}+1}-f_{m_\ell+1}}
=\prod_{\ell=1}^kD_\ell^{f_{m_\ell+1}},
\qquad
D_\ell^c s_{f^{(\ell)}-c\mathbf1}(z^{(\ell)})
=s_{f^{(\ell)}}(z^{(\ell)}).
$$
Substituting these identities and $W=\delta_B^{1/2}\widetilde W$
proves the formula.
\end{proof}

\begin{remark}
The comparison with Shintani's formula \cite{shint} depends on the chosen normalization of the Satake parameters and on whether normalized or unnormalized parabolic induction is used. With the normalization
$$
\Lambda_i=\lambda_iq^{-i(n-i)/2},
$$
Theorem~\ref{thm:diag} has the usual Schur-polynomial form.
\end{remark}

\begin{remark}
Let $P=MU$ be the standard parabolic associated with the ordered partition
$$
(n_1,\ldots,n_k).
$$
Since
$$
\delta_B(d)=\delta_P(d)\delta_{B_M}(d),
$$
where
$$
M\simeq\prod_{\ell=1}^kGL_{n_\ell}(F),
$$
Theorem~\ref{thm:diag} may be written as
$$
W(d)=\delta_P^{1/2}(d)
\prod_{\ell=1}^k
\left(
\delta_{B_{n_\ell}}^{1/2}\left(d^{(\ell)}\right)
 s_{f^{(\ell)}}\left(z^{(\ell)}\right)
\right).
$$
The expression in parentheses is the normalized spherical Whittaker function on the $\ell$-th factor of the Levi subgroup. Thus the diagonal formula is a product of the Casselman--Shalika formulas on the Levi blocks, multiplied by $\delta_P^{1/2}$.
\end{remark}

\section{Representation-theoretic realization}

We fix the following segment convention. For an unramified character $\chi$ of $F^\times$ and an integer $m\geq1$, write
$$
\Delta(\chi,m)=
\left[
\chi\nu^{-(m-1)/2},
\chi\nu^{-(m-3)/2},
\ldots,
\chi\nu^{(m-1)/2}
\right].
$$
We order the characters from the lower endpoint to the upper endpoint. With this ordering,
$$
\operatorname{St}(\Delta(\chi,m))
=\operatorname{St}_m(\chi)
$$
is the unique generic irreducible quotient of
$$
\operatorname{Ind}_{B_m}^{\GL_m(F)}
\left(
\chi\nu^{-(m-1)/2},\ldots,
\chi\nu^{(m-1)/2}
\right).
$$
For an unramified character $\eta$ of $F^\times$, its standard local factor is
$$
L\left(s,\operatorname{St}_m(\chi)\otimes\eta\right)
=L\left(s+\frac{m-1}{2},\chi\eta\right).
$$
In particular, with the above ordering the standard $L$-factor is attached to the \emph{upper} endpoint $\chi\nu^{(m-1)/2}$ of the segment; see \cite[Theorem~8.2, with the second factor of rank $1$]{jpss}.

\begin{proposition}[Zelevinsky Classification]\label{prop:generic-iwahori-classification}
Let $\Delta_1,\ldots,\Delta_t$ be pairwise unlinked segments and let $\operatorname{St}(\Delta_i)$ be their essentially square-integrable representations. Then
$$
\operatorname{Ind}_{P}^{\GL_n(F)}
\left(
\operatorname{St}(\Delta_1)\otimes\cdots\otimes\operatorname{St}(\Delta_t)
\right)
$$
is irreducible and generic. Conversely, every irreducible generic representation of $\GL_n(F)$ with nonzero Iwahori-fixed vectors is obtained in this way, up to reordering of pairwise unlinked segments of unramified characters.
\end{proposition}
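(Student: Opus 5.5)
This proposition is essentially a repackaging of classical results: Zelevinsky's classification and the Bernstein--Zelevinsky theory of derivatives, together with Borel's and Casselman's theorem on Iwahori-fixed vectors. I would therefore assemble it from those ingredients rather than prove anything from scratch.

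For the first assertion, irreducibility, I would invoke Zelevinsky's theorem \cite[Theorem~9.7]{jpss}-style (more precisely Zelevinsky, Ann.\ Sci.\ ENS 1980, Theorem~9.7). It says that the product $\operatorname{St}(\Delta_1)\times\cdots\times\operatorname{St}(\Delta_t)$ is irreducible exactly when no two segments are linked. In that case the product is also independent of the ordering.

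Genericity follows from the Bernstein--Zelevinsky product rule for derivatives. The highest derivative of a product is the product of the highest derivatives, and each $\operatorname{St}(\Delta_i)$ is generic, so its highest derivative is its $m_i$-th one and is trivial. Hence the full $n$-th derivative of the induced representation is one-dimensional. Equivalently, the induced representation has a unique Whittaker functional (Rodier's heredity). Since the representation is irreducible, it is therefore generic.

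For the converse, I would proceed in three steps.

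1. Let $\pi$ be irreducible and generic with $\pi^J\neq0$. By Borel--Casselman, $\pi$ embeds in an unramified principal series $\operatorname{Ind}_B^G(\chi_1,\ldots,\chi_n)$ with each $\chi_i$ unramified. Consequently the supercuspidal support of $\pi$ consists of unramified characters of $F^\times$.
2. By the Zelevinsky/Langlands classification in segment form, $\pi$ is the unique generic subquotient of a product $\operatorname{St}(\Delta_1)\times\cdots\times\operatorname{St}(\Delta_t)$. The segments are built from the characters in the support, so they are segments of unramified characters.
3. Zelevinsky's genericity criterion, which is the key input, says that the generic irreducible representations are precisely those whose multisegment can be chosen pairwise unlinked. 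Then the product in step~2 is irreducible, so it equals $\pi$.

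Finally, uniqueness up to reordering follows because the multiset of segments is determined by $\pi$. Indeed, the Zelevinsky classification is a bijection between multisegments and irreducible representations.

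The main obstacle is the genericity criterion in step~3. Showing that a generic constituent must come from an unlinked multisegment requires the derivative computations together with the linked-segment reducibility (the short exact sequences for $\operatorname{St}(\Delta)\times\operatorname{St}(\Delta')$ when $\Delta,\Delta'$ are linked). I would quote it rather than reprove it, citing Zelevinsky's Theorem~9.7 together with Bernstein--Zelevinsky's theorem that an irreducible representation is generic iff it is a product of essentially square-integrable representations. The remaining bookkeeping is the translation between the paper's segment convention $\Delta(\chi,m)$, which runs from the lower to the upper endpoint, and Zelevinsky's convention, which may require reversing segments. Doing this carefully ensures that ``unlinked'' means the same thing in both conventions.
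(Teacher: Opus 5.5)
Your proposal is correct and matches the paper's argument: both reduce the statement to Zelevinsky's Theorem~9.7(a),(b) for irreducibility, genericity, exhaustion and uniqueness of the segments, and both use the Borel--Casselman theorem to see that the cuspidal support, and hence every segment, consists of unramified characters. Your derivative computation for genericity and the intermediate Langlands-classification step in the converse only unpack what Theorem~9.7 already supplies, so they are harmless but not needed.
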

\begin{remark}
See \cite[Theorem~9.7(a),(b)]{zelevinsky} for the segment classification.
For the converse, the Borel--Casselman theorem realizes any irreducible
representation with nonzero Iwahori-fixed vectors as a subquotient of an
unramified principal series \cite[\S2.1, p.~531]{jantzen-iwahori}.
Thus its cuspidal support consists of unramified characters.
\end{remark}

Write $[a]_q=(q^a-1)/(q-1)$ for $a\geq0$, and put
$H_r=\mathcal H_q(S_r)$ over $\mathbb C$, with
$(T_s-q)(T_s+1)=0$. For a Young subgroup $W_\zeta$, its spherical
idempotent is $e_\zeta=(\sum_{w\in W_\zeta}q^{\ell(w)})^{-1}
\sum_{w\in W_\zeta}T_w$, in the standard finite-Hecke basis. These algebras are semisimple since $q>1$;
$S^\beta$ denotes the Specht module of shape $\beta$, regarded as a left
module via the anti-involution $T_w\mapsto T_{w^{-1}}$ when a source uses
right modules.

\begin{proposition}[Eigenvalues]\label{eigenval}
For $n,m\geq1$, put $N_0=n+m$ and $d=\min(n,m)$, and let $e_{n,m}$ be the spherical
idempotent of $\mathcal H_q(S_n\times S_m)$ in $H_{N_0}$.
Put
$$
A_n=[n]_q[m]_q\,e_{n,m}T_{s_n}e_{n,m}
\in e_{n,m}H_{N_0}e_{n,m}.
$$
For $0\leq j\leq d$, the space
$e_{n,m}S^{(N_0-j,j)}$ is a line, on which $A_n$ acts by
$$
\lambda_j=q^{j+1}[n-j]_q[m-j]_q-[j]_q.
$$
All other Specht modules have zero $e_{n,m}$-fixed space.
\end{proposition}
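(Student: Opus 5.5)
The plan is to pass from the finite Hecke algebra to the permutation representation of $\GL_{N_0}(\mathfrak f)$ on the Grassmannian of $n$-dimensional subspaces. In that setting both assertions become classical facts about the $q$-Johnson (Grassmann) scheme.

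\textbf{Which Specht modules have fixed vectors.} Let $H_{N_0}$ act on the induced module $M=H_{N_0}e_{n,m}$. By the Iwahori--Matsumoto realization, $H_{N_0}\cong\operatorname{End}_{\GL_{N_0}(\mathfrak f)}(\mathbb C[\GL_{N_0}(\mathfrak f)/B(\mathfrak f)])$. Under this isomorphism $e_{n,m}$ becomes the projection onto $P_{n,m}(\mathfrak f)$-invariants, where $P_{n,m}$ is the standard parabolic with blocks $(n,m)$. For a simple module $S^\beta$ we have $\dim e_{n,m}S^\beta=\dim\operatorname{Hom}_{H_{N_0}}(M,S^\beta)$. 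Since $H_{N_0}$ is semisimple for $q>1$ and is a flat deformation of $\mathbb C[S_{N_0}]$, this multiplicity equals the classical multiplicity of the Specht module in $\operatorname{Ind}_{S_n\times S_m}^{S_{N_0}}\mathbf 1$. By Young's rule that multiplicity is the Kostka number $K_{\beta,(n,m)}$. This number equals $1$ when $\beta=(N_0-j,j)$ with $0\le j\le d$, and $0$ otherwise, since a semistandard tableau of content $(n,m)$ has at most two rows and is determined by its shape. This proves the line statement and the vanishing for all other $\beta$. It also shows that $e_{n,m}H_{N_0}e_{n,m}\cong\operatorname{End}(M)$ is commutative of dimension $d+1$, matching the $d+1$ double cosets $W_{n,m}\backslash S_{N_0}/W_{n,m}$ indexed by $j$.

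\textbf{Identifying $A_n$.} Next I will show that $A_n$ is the adjacency operator of the Grassmann graph. Write $P=P_{n,m}(\mathfrak f)$ and normalize group-algebra convolution so that $T_w=\mathbf 1_{BwB}$ with $B$ of volume one. Then $e_{n,m}=|P:B|^{-1}\mathbf 1_P$. The product $\mathbf 1_P*\mathbf 1_{Bs_nB}*\mathbf 1_P$ is supported on $Ps_nP$ and is $P$-bi-invariant, hence equals $c\,\mathbf 1_{Ps_nP}$. To find $c$ I will compare total masses:
$$|P:B|^2 q = c\,|Ps_nP:B| = c\,|P:B|\cdot q[n]_q[m]_q,$$
using that the number of $n$-subspaces meeting a fixed one in codimension one is $q[n]_q[m]_q$. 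This gives
$$[n]_q[m]_q\,e_{n,m}T_{s_n}e_{n,m}=|P:B|^{-1}\mathbf 1_{Ps_nP}.$$
Acting on $P$-invariant functions, i.e.\ functions on $\operatorname{Gr}(n,\mathfrak f^{N_0})$, this is exactly the operator summing over neighbours $U'$ with $\dim(U\cap U')=n-1$. The normalization check via $|Ps_nP/B|$ is the step most prone to off-by-power-of-$q$ errors, so I will verify it directly for $n=m=1$. There $A_1=e_{1,1}T_{s_1}e_{1,1}$, whose eigenvalue on the trivial line is $q$ and on the sign-type line is $-1$; both agree with $\lambda_0=q$ and $\lambda_1=-1$.

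\textbf{Eigenvalues.} The eigenvalues of the Grassmann graph $J_q(N_0,n)$ are classical (Delsarte; Brouwer--Cohen--Neumaier, Thm.~9.3.3): on the $j$-th eigenspace it acts by $q^{j+1}[n-j]_q[m-j]_q-[j]_q$. That eigenspace is the unipotent constituent of $\mathbb C[\operatorname{Gr}]$ indexed by $(N_0-j,j)$, and it corresponds to $S^{(N_0-j,j)}$ under the Hecke correspondence.

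To make the argument self-contained rather than citing, I would take the $j$-th eigenspace to be the span of the images of the "$j$-incidence" maps $\mathbb C[\operatorname{Gr}(j)]\to\mathbb C[\operatorname{Gr}(n)]$ modulo the images from lower $j$. I would then compute the adjacency action on a function pulled back from $\operatorname{Gr}(j)$ by counting neighbours $U'$ of $U$ according to whether they contain a given $j$-subspace $V\subseteq U$. This count produces the leading term $q^{j+1}[n-j][m-j]$ plus a correction term which, after projecting off the lower constituents, contributes $-[j]$.

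The main obstacle is matching the labelling of $j$ on the Grassmannian side with the Specht shape $(N_0-j,j)$. I will fix it by dimension counting: the $j$-th constituent has dimension $\binom{N_0}{j}_q-\binom{N_0}{j-1}_q$, which is the $q$-analogue of $\dim S^{(N_0-j,j)}$. The case $j=0$ provides a consistency check, since it must give the trivial module with eigenvalue $q[n]_q[m]_q$.
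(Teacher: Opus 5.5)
Your proposal is correct and follows essentially the paper's route: realize $H_{N_0}$ inside $\mathbb C[\GL_{N_0}(\mathfrak f)]$, identify $[n]_q[m]_q\,e_{n,m}T_{s_n}e_{n,m}$ with the Grassmann adjacency operator by the same total-mass comparison using $|Ps_nP/P|=q[n]_q[m]_q$, get the line statement from Young's rule, and read off the Grassmann-graph eigenvalues on the multiplicity-free constituents. The differences are that the paper derives those eigenvalues itself from Srinivasan's up--down chain basis via $\mathcal A_n=U_{n-1}D_n-[n]_q$ rather than citing Delsarte or Brouwer--Cohen--Neumaier, and it fixes the labelling by comparing $B_0$-fixed vectors of $\ker D_j$ at ranks $j$ and $j-1$. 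That is your ``$j$-incidence images modulo lower images'' variant, and it is cleaner than the dimension count, which additionally needs the distinctness of $\binom{N_0}{j}_q-\binom{N_0}{j-1}_q$ for $0\le j\le d$ and the degree of the constituent attached to $S^{(N_0-j,j)}$.
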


\begin{proof}
Let $G_0=\GL_{N_0}(\mathbb F_q)$, let $B_0$ be its upper triangular
Borel subgroup, and put $H=H_{N_0}$ and $e=e_{n,m}$.
Let $X_r$ be the Grassmannian of $r$-subspaces and $V_r=\mathbb C[X_r]$.
Write $X_*=\langle e_1,\ldots,e_n\rangle$ and
$P_0=\operatorname{Stab}_{G_0}(X_*)$.
Realize $H$ in $\mathbb C[G_0]$ by
$T_w\mapsto |B_0|^{-1}\sum_{g\in B_0wB_0}g$
\cite[\S2.5, p.~8]{mathas-cyclotomic}. Then
$$
e\longmapsto\frac1{|P_0|}\sum_{g\in P_0}g,\qquad
A_n\longmapsto a_n=\frac1{|P_0|}\sum_{g\in P_0s_nP_0}g.
$$
Indeed, the image of $eT_{s_n}e$ and $a_n$ are $P_0$-bi-invariant,
supported on the same double coset, and their coefficient sums are $q$ and
$|P_0s_nP_0/P_0|=q[n]_q[m]_q$, respectively.
If $\delta_X$ denotes the basis vector indexed by $X\in X_n$, the
$G_0$-equivariant Grassmann adjacency operator is
$$
\mathcal A_n\delta_X=
\sum_{\substack{Y\in X_n\\\dim(X\cap Y)=n-1}}\delta_Y,
\qquad a_n\delta_{X_*}=\mathcal A_n\delta_{X_*}.
$$

Averaging over the stabilizing parabolic gives
$V_r^{B_0}\simeq He_{r,N_0-r}$, the level-$1$ permutation module
$M^{(r,N_0-r)}$ of \cite[\S4.1, p.~28]{mathas-cyclotomic}.
Its Specht filtration (Corollary~4.6 there) splits by semisimplicity;
the multiplicities count semistandard tableaux of weight $(r,N_0-r)$.
These have at most two rows. For shape $(N_0-j,j)$, the bottom row
has $j$ twos and the top row is forced; such a tableau exists exactly
when $j\leq\min(r,N_0-r)$. Thus
$$
V_r^{B_0}\simeq
\operatorname{Ind}_{H_r\otimes H_{N_0-r}}^{H_{N_0}}\mathbf1
\simeq\bigoplus_{j=0}^{\min(r,N_0-r)}S^{(N_0-j,j)}.
$$
At $r=0,N_0$ this means the trivial module, with $H_0=\mathbb C$.
Frobenius reciprocity at $r=n$ now gives all the assertions about
the dimensions of $eS^\beta$.
Since $V_n\subset\operatorname{Ind}_{B_0}^{G_0}\mathbf1$, the finite-Hecke
correspondence also shows that $V_n$ is multiplicity-free.
For each irreducible $G_0$-constituent $E\subset V_n$,
the projection of $\delta_{X_*}$ onto $E$ is nonzero
(since $\delta_{X_*}$ generates $V_n$) and spans $E^{P_0}=eE^{B_0}$.
Projecting $a_n\delta_{X_*}=\mathcal A_n\delta_{X_*}$ onto $E$
therefore identifies the scalar of $\mathcal A_n$ on $E$ with that
of $A_n$ on $eE^{B_0}$.

To attach an eigenvalue to each shape, give $V_r$ the inner product
making its subspace basis orthonormal. Define $U_r:V_r\to V_{r+1}$
by summing over the $(r+1)$-subspaces containing a given $r$-subspace,
and put $D_{r+1}=U_r^*$ and $D_0=0$; these maps are $G_0$-equivariant.
Srinivasan constructs an orthogonal basis consisting of chains
$x_j,\ldots,x_{N_0-j}$ with $U_rx_r=x_{r+1}$
\cite[Theorem~1.2, equation~(8), p.~3, arXiv version]{srinivasan-grassmann}.
Orthogonality shows that $\ker D_j$ is spanned by the initial vectors
of chains starting at rank $j$. For $1\leq j\leq N_0/2$, the chain
lengths also show that $U_{j-1}$ is injective. Hence
$$
V_j=U_{j-1}V_{j-1}\mathbin{\oplus^\perp}\ker D_j,\qquad
(\ker D_j)^{B_0}\simeq S^{(N_0-j,j)}.
$$
The second identity follows by taking $B_0$-fixed vectors and comparing
the decompositions at ranks $j$ and $j-1$; it also holds for $j=0$.
For $0\leq j\leq d$, define
$$
E_{n,j}=U_{n-1}\cdots U_j(\ker D_j),
\qquad (E_{n,j})^{B_0}\simeq S^{(N_0-j,j)},
$$
where the product is the identity if $n=j$. The isomorphism follows
because every chain starting at $j$ reaches rank $n\leq N_0-j$,
so this product is injective on $\ker D_j$. This covers $n>m$ as well.

On such a chain, adjointness gives
$D_nx_n=(\|x_n\|^2/\|x_{n-1}\|^2)x_{n-1}$ for $n>j$.
Substituting total dimension $N_0$, starting rank $j$, and $u=n-1$
in Srinivasan's equation~(8), and using $D_jx_j=0$ when $n=j$, gives
$$
U_{n-1}D_n\big|_{E_{n,j}}
=q^j[n-j]_q[m-j+1]_q\operatorname{id}.
$$
The coefficient of an $n$-subspace $Y$ in $U_{n-1}D_nX$ counts
the $(n-1)$-subspaces of $X\cap Y$. It is $[n]_q$ when $X=Y$,
$1$ when $X,Y$ are adjacent, and $0$ otherwise. Hence
$\mathcal A_n=U_{n-1}D_n-[n]_q\operatorname{id}$. Its eigenvalue on
$E_{n,j}$, and hence that of $A_n$ on $eS^{(N_0-j,j)}$, is
$$
q^j[n-j]_q[m-j+1]_q-[n]_q
=q^{j+1}[n-j]_q[m-j]_q-[j]_q.
$$
\end{proof}

Let $\mu=(\mu_1,\ldots,\mu_r)$ be an ordered composition of $n$, and
let $K_\mu$ be the standard parahoric with these blocks.
Put $\lambda=(\mu^+)^t$, where $\mu^+$ is the decreasing
rearrangement of $\mu$. For unramified characters $\chi_1,\ldots,\chi_a$
whose segments $\Delta(\chi_j,\lambda_j)$ are pairwise unlinked, define
$$
\pi_{\lambda,\chi}=
\operatorname{Ind}_{P_\lambda}^{G}
\left(\operatorname{St}_{\lambda_1}(\chi_1)\otimes\cdots\otimes
\operatorname{St}_{\lambda_a}(\chi_a)\right),
$$
where $P_\lambda$ has Levi subgroup
$M_\lambda\simeq\prod_{j=1}^a\GL_{\lambda_j}(F)$.
This representation is irreducible and generic by the preceding classification.

\begin{proposition}\label{prop:ferrers-fixed-line}
The representation just defined satisfies
$$
\dim\pi_{\lambda,\chi}^{K_\mu}=1,\qquad
\pi_{\lambda,\chi}^{K'}=0\quad\text{for every parahoric }K'\supsetneq K_\mu.
$$
Thus its fixed line is new with respect to parahoric inclusion.
Let $K_S=K_\mu$ and let $v_S$ span this line. If $\alpha_i\notin S$
merges adjacent blocks of sizes $n,m$, put $d=\min(n,m)$.
For the adjacency generator
$A_i=T_{s_i}=\mathbf1_{K_Ss_iK_S}\in\mathcal H(K_{S\cup\{\alpha_i\}},K_S)$,
with $\operatorname{vol}(K_S)=1$, we have
$$
A_iv_S=-[d]_qv_S.
$$
\end{proposition}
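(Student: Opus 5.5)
Via the Borel--Casselman equivalence, $\pi\mapsto\pi^J$ is an exact functor to modules over the Iwahori--Hecke algebra $\mathcal H(G,J)$. For every parahoric $K_{S'}\supseteq J$ we have $\pi^{K_{S'}}=e_{S'}\pi^J$. The plan is to reduce everything to the finite Hecke algebra $H_n=\mathcal H_q(S_n)\subset\mathcal H(G,J)$ and to compute $\pi_{\lambda,\chi}^J$ as an $H_n$-module. Since $\pi_{\lambda,\chi}$ is parabolically induced from $P_\lambda$, Iwahori factorization and the Bruhat decomposition $G=\bigsqcup P_\lambda wK$ give
$$
\pi_{\lambda,\chi}^J\simeq H_n\otimes_{H_\lambda}\Bigl(\textstyle\bigotimes_j \operatorname{St}_{\lambda_j}(\chi_j)^{J_{\lambda_j}}\Bigr)
$$
as $H_n$-modules, where $H_\lambda=\mathcal H_q(S_{\lambda_1}\times\cdots\times S_{\lambda_a})$. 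The Iwahori-fixed line of a Steinberg representation is the sign character of $H_{\lambda_j}$, on which $T_s$ acts by $-1$. So the finite part is the induced sign module $\operatorname{Ind}_{H_\lambda}^{H_n}\operatorname{sgn}$, which is independent of $\chi$.

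Then $\dim e_\mu\operatorname{Ind}_{H_\lambda}^{H_n}\operatorname{sgn}$ equals the multiplicity pairing of the permutation modules $\operatorname{Ind}\mathbf 1$ from $W_\mu$ and $\operatorname{Ind}\operatorname{sgn}$ from $W_\lambda$. Since $H_n$ is a deformation of $\mathbb C[S_n]$ for $q>1$, this can be computed at $q=1$. By Mackey's formula it counts double cosets $W_\mu wW_\lambda$ with $W_\mu\cap wW_\lambda w^{-1}=1$. These are $0/1$ matrices with row sums $\mu$ and column sums $\lambda=(\mu^+)^t$, and by Gale--Ryser (equivalently, the unitriangularity of Kostka numbers in $\sum_\beta K_{\beta\mu^+}K_{\beta^t\lambda}$) there is exactly one such matrix, the Ferrers matrix. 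This gives $\dim\pi^{K_\mu}=1$. The same count for a strictly larger parahoric $K_{\mu'}$ is over matrices with row sums $\mu'$, obtained by merging adjacent parts of $\mu$; then $\mu'^+$ strictly dominates $\mu^+$, so $(\mu'^+)^t$ is strictly dominated by $\lambda$ and Gale--Ryser gives no such matrix. Hence $\pi^{K'}=0$.

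For the eigenvalue, let $S'=S\cup\{\alpha_i\}$. The element $A_i$ of $\mathcal H(G,K_S)$ is supported in $K_{S'}$, so it lies in the finite algebra $e_SH_ne_S$ for the Levi of $K_{S'}$. After the normalization relating $\operatorname{vol}(K_S)=1$ to $T_w$, it is exactly the operator $A_n=[n]_q[m]_q\,e_{n,m}T_{s_n}e_{n,m}$ of Proposition~\ref{eigenval}, acting on the merged block $\GL_{n+m}$. Restrict $\pi^J$ to $H_{n+m}\otimes(\text{other blocks})$, which is the parahoric Levi of $K_{S'}$, and project onto the $e_S$-fixed line. This line lies in the isotypic component of some $S^{(N_0-j,j)}$, where $N_0=n+m$. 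Because $\pi^{K_{S'}}=0$, the component is not $j=0$. To pin down $j$, I would again use the Kostka/Mackey count: the restriction of $\operatorname{Ind}\operatorname{sgn}$ to the merged block $W_{n}\times W_m\subset S_{N_0}$ paired with $\operatorname{Ind}_{S_n\times S_m}\mathbf 1$ involves only two-row shapes. Among the pieces of the Ferrers matrix supported on the two rows of sizes $n,m$, the column-sum structure forces $d=\min(n,m)$ columns to have both entries equal to $1$. The transposed shape therefore has $d$ columns of height $2$, so $j=d$, the sign-most two-row shape permitted. Proposition~\ref{eigenval} with $j=d$ then gives
$$
\lambda_d=q^{d+1}[n-d]_q[m-d]_q-[d]_q=-[d]_q,
$$
since one of $[n-d]_q,[m-d]_q$ vanishes.

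The main obstacle is the identification $j=d$. It needs care about the fact that the restricted module is not just a sign module, and one must check that the unique line really sits in $S^{(N_0-d,d)}$ rather than in a smaller $j$. I would first try to show this directly: the Ferrers double coset gives a vector whose $H_{N_0}$-span is induced from sign on a Young subgroup of $S_{N_0}$ with $d$ blocks of size $2$, and that module's only $e_{n,m}$-fixed line lies in $S^{(N_0-d,d)}$. A secondary point is checking that the Iwahori--Hecke normalization ($\operatorname{vol}(K_S)=1$ versus $\operatorname{vol}(J)=1$) matches $A_n$ exactly. The factor $[n]_q[m]_q$ in the definition of $A_n$ is meant for exactly this.
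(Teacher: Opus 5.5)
The proposal follows the paper's route closely and is sound, apart from one concrete gap in the newness claim, described in the second paragraph.

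\textbf{Where you agree with the paper.} Your main steps are the paper's:
\begin{itemize}
\item $\pi_{\lambda,\chi}^J\simeq\operatorname{Ind}_{H_\lambda}^{H_n}\operatorname{sgn}$;
\item a Mackey count of $0$--$1$ matrices, with the Ferrers matrix as the unique solution;
\item identification of $A_i$ with $[n]_q[m]_q\,e_{n,m}T_{s_n}e_{n,m}$. Your normalization is right: $e_{n,m}T_{s_n}e_{n,m}$ acts on $\pi^{K_S}$ as $([n]_q[m]_q)^{-1}\sum_{\gamma\in K_Ss_iK_S/K_S}\pi(\gamma)$;
\item location of the fixed line in a two-row Specht module.
\end{itemize}
Computing the multiplicities at $q=1$ by deformation, instead of using the Hecke-algebra Mackey decomposition, is a harmless variation. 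Note that uniqueness of the matrix comes from the Kostka sum; Gale--Ryser only gives existence.

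For $j=d$, your first heuristic (``$d$ columns of height $2$'') is not yet an argument. However, the plan in your last paragraph is exactly how the paper finishes. Take the Mackey summand of the restriction to the merged Levi that carries the Ferrers vector. Each unchanged block contributes a regular module $H_{\mu_a}$ with one-dimensional spherical projection. The merged block contributes $Y^\rho=\operatorname{Ind}_{H_\rho}^{H_{N_0}}\operatorname{sgn}$ with $\rho=(2^d,1^{N_0-2d})$. Then $[Y^\rho:S^{(N_0-j,j)}]=K_{(2^j,1^{N_0-2j}),\rho}$. This equals $1$ for $j=d$ and vanishes for $j<d$, because dominance fails at the $(j+1)$-st partial sum ($2j+1<2j+2$). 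Together with $j\le d$, this forces $j=d$.

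\textbf{The gap: newness.} The statement quantifies over every parahoric $K'\supsetneq K_\mu$. You only treat $K'=K_{\mu'}$ with $\mu'$ a merging of adjacent parts of $\mu$, that is, parahorics inside $\GL_n(\mathcal O)$. Parahorics containing $J$ correspond to proper subsets of the affine Dynkin diagram. Those containing the affine node $\alpha_0$ are not of your form. For example, take $n=2$, $K_\mu=J$, and $\Pi=\left(\begin{smallmatrix}0&1\\ \varpi&0\end{smallmatrix}\right)$. Then $\Pi K\Pi^{-1}$ strictly contains $J$ but is not contained in $K$, so your argument says nothing about it.

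The repair is short:
\begin{itemize}
\item The element $\Pi=\left(\begin{smallmatrix}0&I_{n-1}\\ \varpi&0\end{smallmatrix}\right)$ normalizes $J$ and rotates the affine diagram.
\item Hence some power of $\Pi$ conjugates $K'$ to a standard $K_\zeta$. Here $\zeta$ merges cyclically adjacent parts of a rotation of $\mu$, so it has $s<r$ parts.
\item Since $\pi^{gK'g^{-1}}=\pi(g)\pi^{K'}$, it suffices to show $\pi^{K_\zeta}=0$.
\item Your dominance argument still applies, because $\zeta^+$ strictly dominates $\mu^+$. Alternatively, as in the paper's lattice-chain formulation, a $0$--$1$ matrix with $s$ rows cannot have first column sum $\lambda_1=r>s$.
\end{itemize}
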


\begin{proof}
Put $N=\mu_1+\cdots+\mu_r$. The Iwasawa compact picture and the
Steinberg sign character give
$$
\pi_{\lambda,\chi}^{J}\big|_{\mathcal H_q(S_N)}
\simeq
\operatorname{Ind}_{\mathcal H_q(W_\lambda)}^{\mathcal H_q(S_N)}
\operatorname{sgn}_\lambda;
$$
the unramified twists are trivial on the compact Levi.
Taking $K_S=K_\mu$-fixed vectors amounts to applying the spherical
idempotent of $\mathcal H_q(W_\mu)$. Under the Mackey decomposition,
double cosets are parametrized by matrices $A=(a_{ij})$ with row sums
$\mu$ and column sums $\lambda$. The contribution of $A$ to the fixed
space is nonzero precisely when the trivial character and the restricted
sign character agree on the intersection algebra
$$
\bigotimes_{i,j}\mathcal H_q(S_{a_{ij}}).
$$
This happens precisely when every $a_{ij}\leq1$, and each such
contribution is one-dimensional; compare the signed double-coset basis
in \cite[Lemma~3.1(3) and Proposition~5.5]{du-rui}.

Thus the dimension is the number of $0$--$1$ matrices with row sums
$\mu$ and column sums $\lambda=(\mu^+)^t$. For $0\leq h\leq a$,
$$
\sum_{j=1}^h a_{ij}\leq\min(h,\mu_i),\qquad
\sum_i\sum_{j=1}^h a_{ij}
=\sum_{j=1}^h\lambda_j=\sum_i\min(h,\mu_i).
$$
Thus $\sum_{j=1}^h a_{ij}=\min(h,\mu_i)$ for every $i,h$.
Subtracting the identities for $h$ and $h-1$ gives the unique Ferrers matrix
$$
F_{ih}=a_{ih}=\min(h,\mu_i)-\min(h-1,\mu_i)=
\begin{cases}
1,&h\leq\mu_i,\\
0,&h>\mu_i.
\end{cases}
$$
Therefore $\dim\pi_{\lambda,\chi}^{K_S}=1$; in the induced model, only
the Ferrers double coset supports a fixed section.

Let $K'\supsetneq K_\mu$ be parahoric. Its lattice chain is a proper
periodic subchain of that for $K_\mu$; hence, for some $g\in G$ and
composition $\zeta=(\zeta_1,\ldots,\zeta_s)$ of $N$,
$$
gK'g^{-1}=K_\zeta,\qquad s<r=\lambda_1.
$$
A nonzero Mackey contribution to $\pi_{\lambda,\chi}^{K_\zeta}$ would
require $A=(a_{ij})\in\{0,1\}^{s\times a}$ with column sums $\lambda$.
This is impossible, since
$$
\lambda_1=\sum_{i=1}^s a_{i1}\leq s<\lambda_1.
$$
Consequently $\pi_{\lambda,\chi}^{K'}\simeq
\pi_{\lambda,\chi}^{K_\zeta}=0$, proving newness.

The full parahoric Hecke algebra preserves this line, so acts by a character
$$
\vartheta_S:\mathcal H(G,K_S)\longrightarrow\mathbb C.
$$
To compute $\vartheta_S(A_i)$, suppose that $\alpha_i$ separates blocks
$p,p+1$, and put
$$
\mu_p=n,\qquad \mu_{p+1}=m,\qquad N_0=n+m,\qquad d=\min(n,m).
$$
Let $\mu'$ merge these blocks and write $K_i=K_{\mu'}$.
The local algebra $\mathcal H(K_i,K_S)$ embeds in $\mathcal H(K,K_S)$.
If $K^1$ is the principal congruence subgroup, reduction modulo
$\mathfrak p$ gives
$$
K_i/K^1\simeq P_{\mu'}(\mathfrak f),\qquad
K_S/K^1\simeq P_\mu(\mathfrak f).
$$
Let $U_{\mu'}$ be the unipotent radical of $P_{\mu'}$, and put
$L^{(i)}=\prod_{a\ne p,p+1}\GL_{\mu_a}(\mathfrak f)$.
Both parabolics contain $U_{\mu'}(\mathfrak f)$, and the quotient pair is
$$
\left(
P_{\mu'}(\mathfrak f)/U_{\mu'}(\mathfrak f),
P_\mu(\mathfrak f)/U_{\mu'}(\mathfrak f)
\right)
\simeq
\left(
L^{(i)}\times\GL_{N_0}(\mathfrak f),
L^{(i)}\times P_{n,m}(\mathfrak f)
\right).
$$
Quotienting by this common normal subgroup and cancelling the common
factor $L^{(i)}$ give, with each distinguished subgroup of volume $1$,
$$
\begin{aligned}
\mathcal H(K_i,K_S)
&\simeq\mathcal H(P_{\mu'}(\mathfrak f),P_\mu(\mathfrak f))\\
&\simeq\mathcal H(\GL_{N_0}(\mathfrak f),P_{n,m}(\mathfrak f)).
\end{aligned}
$$
On the finite Hecke side, let
$H_{n,m}=H_n\otimes H_m$ and let $e_{n,m}$ be its spherical idempotent.
The local algebra is the $q$-Johnson algebra
$e_{n,m}H_{N_0}e_{n,m}$. As in Proposition~\ref{eigenval}, the
characteristic function $A_i$ corresponds to the adjacency operator
$$
A_i\longleftrightarrow[n]_q[m]_q\,e_{n,m}T_{s_n}e_{n,m}.
$$
It remains to identify the module on which this algebra acts.

In the Mackey summand containing the Ferrers fixed vector, merging the
selected rows gives column-intersection sizes
$$
\rho=(2^d,1^{N_0-2d}).
$$
Writing $H_\rho=\bigotimes_jH_{\rho_j}$, the merged factor is
$$
Y^\rho=\operatorname{Ind}_{H_\rho}^{H_{N_0}}\operatorname{sgn}_\rho.
$$
Each unchanged block contributes a regular module whose spherical
projection is one-dimensional: if $e_t$ is the spherical idempotent of
$H_t$, then
$$
e_tH_t=\mathbb C e_t.
$$
Consequently, under the preceding Hecke-algebra isomorphism,
$$
\mathbb C v_S\simeq e_{n,m}Y^\rho.
$$

We now determine which spherical constituent occurs in $Y^\rho$.
Let $\mathbb D(V)$ denote the twist of an $H_{N_0}$-module by the
algebra involution
$$
T_s^\#=-qT_s^{-1}=q-1-T_s;
$$
thus $h$ acts on $\mathbb D(V)$ as $h^\#$ acts on $V$.
This interchanges the trivial and sign characters and commutes with
parabolic induction, so
$$
Y^\rho\simeq\mathbb D(M^\rho),\qquad
M^\rho=\operatorname{Ind}_{H_\rho}^{H_{N_0}}\mathbf1.
$$
The sign-twist duality for Specht modules
\cite[\S2 and Lemma~5.6]{kunzer-mathas-specht}, together with
self-duality of Specht modules in our semisimple setting, gives
$$
\mathbb D(S^\alpha)\simeq S^{\alpha^t}.
$$
The Specht filtration in \cite[Corollary~4.6]{mathas-cyclotomic}, at
Ariki--Koike level $1$, splits by semisimplicity and gives the ordinary
Young rule $[M^\rho:S^\beta]=K_{\beta,\rho}$. Here $K_{\beta,\rho}$
counts semistandard tableaux of shape $\beta$ and weight $\rho$. Hence
$$
[Y^\rho:S^\alpha]=K_{\alpha^t,\rho}.
$$

By Proposition~\ref{eigenval}, the irreducible modules
with nonzero $H_{n,m}$-fixed space are exactly
$S^{(N_0-j,j)}$, $0\leq j\leq d$, and their fixed spaces are lines.
Their multiplicities in $Y^\rho$ are
$$
[Y^\rho:S^{(N_0-j,j)}]
=K_{(2^j,1^{N_0-2j}),\,(2^d,1^{N_0-2d})}.
$$
For $j=d$ the two partitions agree, so the Kostka number is $1$.
For $j<d$, dominance fails at $j+1$, since
$$
\sum_{a=1}^{j+1}(2^j,1^{N_0-2j})_a
=2j+1<2j+2
=\sum_{a=1}^{j+1}(2^d,1^{N_0-2d})_a.
$$
Thus only $S^{(N_0-d,d)}$ contributes, and
$$
e_{n,m}Y^\rho\simeq e_{n,m}S^{(N_0-d,d)}.
$$
Finally Proposition~\ref{eigenval} gives
$$
A_iv_S=\lambda_dv_S
=\bigl(q^{d+1}[n-d]_q[m-d]_q-[d]_q\bigr)v_S
=-[d]_qv_S,
$$
because $d=\min(n,m)$. This covers both orders of the two block sizes.
\end{proof}

The full Hecke character of this line retains the entire representation.

\begin{corollary}\label{cor:parahoric-character-classification}
Let $\pi$ be irreducible, generic, and Iwahori-spherical. Let $\lambda$
be the decreasing list of its segment lengths and put $\mu=\lambda^t$.
The character $\vartheta_\pi$ of $\mathcal H(G,K_\mu)$ on
$\pi^{K_\mu}$, together with $\mu$, determines $\pi$ up to isomorphism.
As the unordered pairwise unlinked segments of unramified characters
vary, these pairs $(\mu,\vartheta_\pi)$ describe all such representations
without repetition.
\end{corollary}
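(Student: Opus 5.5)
Given $(\mu,\vartheta_\pi)$, the plan is to recover the multiset of segments of $\pi$. First, $\mu=\lambda^t$ determines the decreasing list of segment lengths $\lambda=\mu^t$, since $\mu$ is a partition. By Proposition~\ref{prop:generic-iwahori-classification}, $\pi\simeq\pi_{\lambda,\chi}$ for some unramified $\chi_1,\ldots,\chi_a$ with pairwise unlinked segments $\Delta(\chi_j,\lambda_j)$. Proposition~\ref{prop:ferrers-fixed-line} then shows that $\pi^{K_\mu}$ is a line, so $\vartheta_\pi$ is well defined. It remains to recover the unordered multiset $\{(\chi_j,\lambda_j)\}$ from $\vartheta_\pi$. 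Once that is done, the classification also gives the ``without repetition'' statement: distinct unordered unlinked segment multisets give non-isomorphic representations, and both the partition $\mu$ and the character are isomorphism invariants.

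To read off the $\chi_j$, I would use the Bernstein--Iwahori presentation of $\mathcal H(G,J)$. Its center is $\mathbb C[\theta_{d}]^{S_n}$, the symmetric Laurent polynomials in the translation elements. This center acts on $\pi^J$, and hence on $\pi^{K_\mu}=e_\mu\pi^J$, by the Satake parameter of the cuspidal support. That support is the multiset $\bigcup_j\Delta(\chi_j,\lambda_j)$ of unramified characters, that is, the values $\chi_j(\varpi)q^{-(\lambda_j-1)/2+k}$. The central elements $z$ satisfy $e_\mu z e_\mu=z e_\mu\in e_\mu\mathcal H(G,J)e_\mu\simeq\mathcal H(G,K_\mu)$, so their eigenvalues are values of $\vartheta_\pi$. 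Since symmetric functions separate multisets, $\vartheta_\pi$ determines the cuspidal support. Alternatively, and more in the spirit of the paper, one could take the elements $T_{d_{m_\ell}}$ and the blockwise $T_{d_i}$, whose eigenvalues give the $z^{(\ell)}$ in Theorem~\ref{thm:diag}, and identify their elementary symmetric functions with the Satake data blockwise. I would use the Bernstein center route because it is cleaner.

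The main step is to show that the cuspidal support together with the lengths $\lambda$ determines the unlinked segments. The support alone does not suffice: for example, $\{\nu^{-1/2},\nu^{1/2}\}$ arises both from $\operatorname{St}_2$ and from the irreducible principal series. Here the lengths resolve the ambiguity, because $\lambda$ fixes the segment lengths. I expect the following combinatorial lemma to be the obstacle: a multiset of unramified characters admits at most one decomposition into pairwise unlinked segments with prescribed length multiset. I would prove it by induction on the largest length $\lambda_1$. Within each $q^{\mathbb Z}$-line of characters, unlinked segments are pairwise nested or separated by a gap. Consider the longest segment of length $\lambda_1$ on a given line, and take the minimal character $x$ on that line. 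I would argue that some length-$\lambda_1$ segment can be peeled off canonically. For instance, among segments beginning at $x$, unlinkedness forces all segments starting at $x$ to be nested, so the longest one through $x$ is determined by multiplicity counts along the line. After removing it, the induction applies. If this greedy uniqueness fails in general, I would instead use more of $\vartheta_\pi$: the adjacent-block eigenvalues $-[d]_q$ of Proposition~\ref{prop:ferrers-fixed-line}, together with the diagonal formula of Theorem~\ref{thm:diag}, which records the parameters $z^{(\ell)}$ attached to each column block of the Ferrers diagram and thus records which characters lie at which height of their segments. The blockwise parameters give, for each $\ell$, the multiset of the $\ell$-th entries of the segments of length $\geq\ell$. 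Reading down the columns of $\lambda$ then reconstructs each segment as an arithmetic progression in $q$, pinning down the $\chi_j$ directly.
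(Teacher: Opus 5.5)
Your route differs from the paper's. Its first step is sound, but the step you call the obstacle is not proved. The first step works because the center $Z$ of $\mathcal H(G,J)$ acts on $\pi^J$ through the cuspidal support, and $z e_\mu=e_\mu z e_\mu\in e_\mu\mathcal H(G,J)e_\mu\simeq\mathcal H(G,K_\mu)$. So $\vartheta_\pi$ determines the multiset of Satake parameters.

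\textbf{The gap.} The combinatorial lemma is only a hedged sketch (``some length-$\lambda_1$ segment can be peeled off canonically \dots\ if this greedy uniqueness fails''). It conflates two peeling rules and justifies neither. In particular, the segment through the minimal point of a line need not have length $\lambda_1$. The example motivating the lemma is also false. The singletons $[\nu^{-1/2}]$ and $[\nu^{1/2}]$ are juxtaposed, hence linked, and $\nu^{-1/2}\times\nu^{1/2}$ is reducible. So $\operatorname{St}_2$ is the only generic irreducible representation with that support.

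\textbf{How to close it.} The cuspidal support alone determines a generic irreducible $\pi$, with no use of the lengths. Such a $\pi$ is a subquotient of the unramified principal series $I(\boldsymbol\eta)$ with that support. This $I(\boldsymbol\eta)$ has a one-dimensional space of Whittaker functionals, and the twisted Jacquet functor is exact. Hence exactly one constituent is generic. If you prefer a combinatorial proof, work on one line $\chi\nu^{\mathbb Z}$ with multiplicity function $m$. In a pairwise unlinked family, the segments of nesting depth $k$ are pairwise non-nested, hence separated by gaps. They are therefore exactly the connected components of $\{x:m(x)\ge k\}$, so $m$ determines the family.

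\textbf{The fallback does not work.} Theorem~\ref{thm:diag} assumes trivial central character. Identifying its blockwise $z^{(\ell)}$ with the column characters is Proposition~\ref{prop:diagonal-opposite-model}, which needs every $\sigma_b$ irreducible. The corollary explicitly avoids that hypothesis.

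\textbf{The paper's route.} The paper never passes through the cuspidal support. Put $H=\mathcal H(G,J)$ and $e=e_{S_\mu}$. Then $\pi^J$ is a nonzero, hence surjective, image of $M=He\otimes_{eHe}\mathbb C_{\vartheta_\pi}$. Since $eM$ is a line generating $M$, every proper submodule is killed by $e$. So $M$ has a unique simple quotient, which must be $\pi^J$, and the Borel--Casselman equivalence recovers $\pi$. That recovery step needs only that $\pi^{K_\mu}$ is a line and $\pi^J$ is simple. It uses the whole character but no Bernstein center or segment combinatorics.

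\textbf{What your repaired route adds.} It spends genericity to gain a sharper statement: the restriction of $\vartheta_\pi$ to $Z e_\mu$ already determines $\pi$. This contrasts with the adjacent-block eigenvalues, which the paper notes do not determine $\pi$.
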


\begin{proof}
Existence and exhaustion follow from Propositions~\ref{prop:generic-iwahori-classification}
and~\ref{prop:ferrers-fixed-line}. Put $H=\mathcal H(G,J)$ and
$e=e_{S_\mu}$. Then $eHe=\mathcal H(G,K_\mu)$ and
$e\pi^J=\pi^{K_\mu}\simeq\mathbb C_{\vartheta_\pi}$.
The simple $H$-module $\pi^J$ is a quotient of
$$
M=He\otimes_{eHe}\mathbb C_{\vartheta_\pi}.
$$
Since $eM\simeq\mathbb C_{\vartheta_\pi}$ generates $M$, every proper
submodule has zero $e$-image. Their sum is therefore the unique maximal
submodule, so $M$ has a unique simple quotient. The Iwahori equivalence
\cite[Theorem~2.1.1, p.~531]{jantzen-iwahori} then recovers $\pi$.
\end{proof}

This is a realization of the segment classification by the full
parahoric Hecke characters; the adjacent-block eigenvalues alone do not
determine the unramified twists. No additional irreducibility assumption
from Section~\ref{sec:opposite-induced-model} is used here.

\subsection{An opposite induced model for the fixed vector}
\label{sec:opposite-induced-model}

We construct a quotient map from an opposite induced representation
to a representation induced from Steinberg factors. Its identity-supported
section maps to the normalized parahoric fixed vector. Compatibility
with Whittaker functionals then lets us evaluate that vector in the
opposite model.

Let
$$
\lambda=(\lambda_1\geq\cdots\geq\lambda_r),
\qquad
\mu=\lambda^t=(\mu_1,\ldots,\mu_t),
\qquad
\mu_b=\#\{a:\lambda_a\geq b\}.
$$
The partition $\lambda$ specifies the Steinberg factors; $\mu$
specifies the parahoric $K_\mu$ and the Levi $M_\mu$. Write
$$
P_\lambda=M_\lambda U_\lambda,
\qquad
P_\mu=M_\mu U_\mu,
\qquad
\overline P_\mu=M_\mu\overline U_\mu,
$$
where
$$
M_\lambda\simeq\prod_{a=1}^{r}\GL_{\lambda_a}(F),
\qquad
M_\mu\simeq\prod_{b=1}^{t}\GL_{\mu_b}(F).
$$
Keep the characters $\chi_a$ and pairwise unlinked segments
$\Delta_a=\Delta(\chi_a,\lambda_a)$ from the definition of
$\pi_{\lambda,\chi}$, and write
$\tau_\lambda^-=\boxtimes_{a=1}^{r}\operatorname{St}_{\lambda_a}(\chi_a)$.
For comparison with Theorem~\ref{thm:diag}, we retain
$\prod_a\chi_a^{\lambda_a}=1$; the construction and all-cell formula
below also apply with arbitrary unramified central character.

For $1\leq a\leq r$ and $1\leq b\leq\lambda_a$, define
$$
\xi_{a,b}=\chi_a\nu^{(\lambda_a+1)/2-b}.
$$
Thus $\xi_{a,1}$ is the upper endpoint of $\Delta_a$.
We need two ordered tuples of these characters:
$$
\begin{aligned}
\boldsymbol\alpha
&=\left(
\xi_{1,\lambda_1},\ldots,\xi_{1,1},
\ldots,
\xi_{r,\lambda_r},\ldots,\xi_{r,1}
\right),\\
\boldsymbol\beta
&=\left(
\xi_{1,1},\ldots,\xi_{\mu_1,1},
\ldots,
\xi_{1,t},\ldots,\xi_{\mu_t,t}
\right).
\end{aligned}
$$
The tuple $\boldsymbol\alpha$ groups the characters by Steinberg
factor, in the order giving the Steinberg quotient. The tuple
$\boldsymbol\beta$ groups them by the blocks of $M_\mu$ and will
be the inducing tuple of the opposite model.

For any unramified tuple $\boldsymbol\eta$, write
$$
I(\boldsymbol\eta)=\operatorname{Ind}_B^G(\boldsymbol\eta).
$$
For each $a$, normalized induction gives the quotient
$$
q_a:
\operatorname{Ind}_{B_{\lambda_a}}^{\GL_{\lambda_a}(F)}
\left(\xi_{a,\lambda_a},\ldots,\xi_{a,1}\right)
\twoheadrightarrow\operatorname{St}_{\lambda_a}(\chi_a).
$$
Induction in stages therefore gives
$$
Q_\lambda
=\operatorname{Ind}_{P_\lambda}^{G}(q_1\boxtimes\cdots\boxtimes q_r):
I(\boldsymbol\alpha)\twoheadrightarrow\pi_{\lambda,\chi}.
$$

For $1\leq b\leq t$, let
$$
\sigma_b
=\operatorname{Ind}_{B_{\mu_b}}^{\GL_{\mu_b}(F)}
\left(\xi_{1,b},\ldots,\xi_{\mu_b,b}\right),
$$
and assume that every $\sigma_b$ is irreducible. Its Satake parameter is
$$
z^{(b)}
=\left(\xi_{1,b}(\varpi),\ldots,\xi_{\mu_b,b}(\varpi)\right).
$$
Choose its spherical vector $u_b^+$ and a Whittaker functional $\ell_b$
with $\ell_b(u_b^+)=1$, and put
$$
W_{\sigma_b}^{\circ}(g)
=\ell_b\left(\sigma_b(g)u_b^+\right).
$$
Irreducibility also gives the opposite-Borel realization
$$
\sigma_b\simeq
\operatorname{Ind}_{\overline B_{\mu_b}}^{\GL_{\mu_b}(F)}
\left(\xi_{1,b},\ldots,\xi_{\mu_b,b}\right).
$$
Fix this identification to carry the spherical section with value $1$
at the identity to $u_b^+$. Its normalized spherical Whittaker function
is then $W_{\sigma_b}^{\circ}$ in either realization.

We now specify the support of the fixed vector in $\pi_{\lambda,\chi}$.
Retain the column intervals $C_b$ and define
$$
l_0=0,\qquad l_a=\lambda_1+\cdots+\lambda_a,\qquad
R_a=\{l_{a-1}+1,\ldots,l_a\}.
$$
The permutation
$$
w_{\lambda,\mu}(m_{b-1}+a)=l_a-b+1
\qquad(1\leq a\leq\mu_b)
$$
reorders $\boldsymbol\beta$ into $\boldsymbol\alpha$. Our convention is
$$
(w\boldsymbol\eta)_i=\eta_{w^{-1}(i)},
\qquad
\boldsymbol\alpha=w_{\lambda,\mu}\boldsymbol\beta.
$$
Its explicit definition uses the labels $(a,b)$, so it is unambiguous
even when characters coincide. If $w_{0,\lambda}$ is the longest element
of $W_\lambda$, then
$$
x_{\lambda,\mu}=w_{0,\lambda}w_{\lambda,\mu},
\qquad
x_{\lambda,\mu}(m_{b-1}+a)=l_{a-1}+b.
$$
It is increasing on each $C_b$, and its inverse is increasing on each
$R_a$. Hence it is the minimal-length representative of
$W_\lambda w_{\lambda,\mu}W_\mu$.

Choose nonzero vectors
$$
u_a^-\in\operatorname{St}_{\lambda_a}(\chi_a)^{J_{\lambda_a}},
\qquad
u_\lambda^-=\bigotimes_a u_a^-,
$$
and let $\Lambda_\pi$ be any nonzero Whittaker functional on
$\pi_{\lambda,\chi}$. The quotient argument below will prove that
it is nonzero on the parahoric fixed line and determine its normalization.

\noindent\textbf{The Ferrers section.}
The unique Mackey summand in Proposition~\ref{prop:ferrers-fixed-line}
is represented by $x_{\lambda,\mu}$. Its incidence matrix gives
$$
\operatorname{pr}_{M_\lambda}
\left(P_\lambda\cap\dot x_{\lambda,\mu}K_\mu\dot x_{\lambda,\mu}^{-1}\right)
=\prod_a J_{\lambda_a}.
$$
Consequently the fixed line is spanned by the section $v_\mu$ defined by
$$
v_\mu(p\dot x_{\lambda,\mu}k)
=\delta_{P_\lambda}^{1/2}(p)\tau_\lambda^-(p)u_\lambda^-,
\qquad p\in P_\lambda,\quad k\in K_\mu,
$$
and zero elsewhere. The intersection fixes $u_\lambda^-$, so this is
well-defined. Since the inducing action is invertible and
$w_{0,\lambda}\in P_\lambda$, its support is exactly
$$
\operatorname{supp}(v_\mu)
=P_\lambda\dot x_{\lambda,\mu}K_\mu
=P_\lambda\dot w_{\lambda,\mu}K_\mu.
$$
This is support in the induced model; the Whittaker support is governed
by Lemma~\ref{supportlemma}.

\begin{lemma}[Whittaker Heredity]
\label{lem:whittaker-heredity-open}
Let $P=MU_P$ be a standard parabolic, let $\overline P=M\overline U_P$
be its opposite, and put $N_M=N\cap M$. Let $\sigma$ be a smooth
admissible representation of $M$, extended trivially across
$\overline U_P$, and let
$$
V=\operatorname{Ind}_{\overline P}^{G}(\sigma).
$$
Restriction to the open cell $\overline PN$ identifies
$$
\operatorname{Hom}_N(V,\psi)
\simeq
\operatorname{Hom}_{N_M}(\sigma,\psi|_{N_M}).
$$
More explicitly, the functional associated with
$\ell\in\operatorname{Hom}_{N_M}(\sigma,\psi|_{N_M})$ is characterized,
on sections compactly supported modulo $\overline P$ in
$\overline PN=\overline PU_P$, by
$$
\Lambda_\ell(F)
=\int_{U_P}\ell(F(u))\psi(u)^{-1}\,du.
$$
If $M=\prod_b GL_{m_b}(F)$ and
$\sigma=\boxtimes_b\sigma_b$ with each $\sigma_b$ irreducible and generic,
then this space of functionals has dimension one. If $\ell_b$ is a
Whittaker functional on $\sigma_b$, the corresponding Levi functional is
$$
\ell=\bigotimes_b\ell_b,\qquad
\ell\left(\bigotimes_b v_b\right)=\prod_b\ell_b(v_b).
$$
\end{lemma}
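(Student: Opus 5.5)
The plan is to prove the Whittaker Heredity lemma with the Bernstein--Zelevinsky geometric lemma (Bruhat filtration of $V$ restricted to $N$) and then identify the open-cell contribution explicitly. Here is the sequence of steps.

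\emph{Orbit filtration.} By the Bruhat decomposition, $\overline P\backslash G/N$ is finite and indexed by $W_M\backslash\mathbf W$. There is a unique open orbit, $\overline PN=\overline PU_P$, on which multiplication $\overline P\times U_P\to G$ is injective. Choose an ordering of the orbits compatible with closure. This gives a filtration of $V|_N$ whose bottom piece is the subspace $V_0$ of sections compactly supported modulo $\overline P$ in the open cell. Restriction gives $V_0\simeq C_c^\infty(U_P,\sigma)$, on which $N=N_MU_P$ acts as follows:
\begin{itemize}
\item $U_P$ acts by translation;
\item $N_M$ acts through $\sigma$ combined with conjugation on $U_P$.
\end{itemize}

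\emph{Twisted Jacquet module of the open piece.} Since $\psi$ restricts to a character of $U_P$ that is trivial on the derived part, the $(U_P,\psi)$-coinvariants of $C_c^\infty(U_P,\sigma)$ are identified with $\sigma$. The map realizing this is $F\mapsto\int_{U_P}F(u)\psi(u)^{-1}\,du$. Taking the remaining $(N_M,\psi)$-coinvariants yields
\[
\operatorname{Hom}_N(V_0,\psi)\simeq\operatorname{Hom}_{N_M}(\sigma,\psi|_{N_M}).
\]
This also gives the stated integral formula for $\Lambda_\ell$. Convergence is not an issue, because $F$ is compactly supported on $U_P$.

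\emph{Vanishing on the closed orbits.} For each non-open orbit $\overline P\dot wN$, the corresponding subquotient is $\operatorname{ind}$ from $N\cap\dot w^{-1}\overline P\dot w$ of a twisted restriction of $\sigma$. Its $(N,\psi)$-coinvariants vanish. The reason is that for $w$ not in the open coset there is a simple root $\alpha$ with $w\alpha$ lying in the unipotent radical $\overline U_P$. On that root subgroup $\psi$ is nontrivial, while it acts trivially on $\sigma$ (since $\sigma$ is extended trivially across $\overline U_P$).

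\emph{Main obstacle: exactness and extension.} This is the step I expect to be hardest. One must show two things:
\begin{itemize}
\item every $\psi$-Whittaker functional on $V_0$ extends uniquely to $V$;
\item the restriction map $\operatorname{Hom}_N(V,\psi)\to\operatorname{Hom}_N(V_0,\psi)$ is injective.
\end{itemize}
Both follow from exactness of the twisted Jacquet functor $J_{N,\psi}$, which holds because $N$ is a union of compact open subgroups. Applied to the filtration, exactness gives $J_{N,\psi}(V)\simeq J_{N,\psi}(V_0)$, since all higher subquotients contribute zero. The delicate point is choosing the orbit order so that the open cell really is the bottom of the filtration, and matching the normalizations of $\delta^{1/2}$.

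\emph{Dimension one for products of generic representations.} Finally, specialize to $M=\prod_b\GL_{m_b}$ and $\sigma=\boxtimes_b\sigma_b$. Then $N_M=\prod_b N_{m_b}$ and $\psi|_{N_M}=\prod_b\psi_b$, since $\psi$ only sees superdiagonal entries and those inside the blocks are generic characters. Uniqueness of Whittaker models for each irreducible generic $\sigma_b$ (Gelfand--Kazhdan / Shalika), combined with the fact that Jacquet functors commute with external tensor products for admissible representations, gives
\[
J_{N_M,\psi}(\boxtimes_b\sigma_b)\simeq\bigotimes_b J_{N_{m_b},\psi_b}(\sigma_b)\simeq\mathbb C .
\]
Hence $\operatorname{Hom}_{N_M}(\sigma,\psi|_{N_M})$ is one-dimensional and spanned by $\bigotimes_b\ell_b$.
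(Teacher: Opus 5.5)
Your proof is correct. The difference is that you prove the heredity isomorphism, whereas the paper only cites it.

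The paper invokes Rodier's theorem (Casselman--Shalika, Theorems~1.4 and~1.6) and notes that for induction from $\overline P$ the open-cell representative is $1$, so the Levi character is $\psi|_{N_M}$ itself. The product-Levi case is then Whittaker uniqueness factor by factor, exactly as in your last step.

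You instead reconstruct the Bernstein--Zelevinsky argument behind Rodier's theorem:
\begin{itemize}
\item the orbit filtration of $V|_N$;
\item the identification $J_{U_P,\psi}(V_0)\simeq\sigma$ as $N_M$-modules;
\item vanishing on the non-open orbits;
\item exactness of $J_{N,\psi}$.
\end{itemize}
This buys a self-contained proof that visibly works for any smooth $\sigma$. Admissibility enters only through uniqueness of Whittaker models for the irreducible $\sigma_b$, and the identity $J_{N_M,\psi}(\boxtimes_b\sigma_b)\simeq\bigotimes_bJ_{N_{m_b},\psi_b}(\sigma_b)$ is purely algebraic. It also shows why only the identity cell survives. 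The citation buys brevity and absorbs the standard inputs you use without proof: each subquotient is the compact induction $\operatorname{ind}_H^N\rho$ with $H=N\cap\dot w^{-1}\overline P\dot w$, and $J_{N,\psi}(\operatorname{ind}_H^N\rho)\simeq J_{H,\psi}(\rho)$ for this unipotent $H$.

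The one assertion you should actually justify is the existence of the simple root. Write $\Phi_M$ for the roots of $M$. Both $H$ and the condition $w\alpha\in\Phi^-\setminus\Phi_M$ depend only on the coset $W_Mw$, because $W_M$ normalizes $\overline U_P$. So take $w\neq1$ of minimal length in its coset, which gives $w^{-1}\Phi_M^+\subseteq\Phi^+$. Choose a simple $\alpha$ with $w\alpha<0$. If $w\alpha$ were in $\Phi_M^-$, then $w^{-1}(-w\alpha)=-\alpha<0$ would contradict minimality. Hence $x_\alpha(t)\in H$ and acts trivially on $\rho$, while $\psi$ is nontrivial on it.

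Also, the $N_M$-equivariance of $F\mapsto\int_{U_P}F(u)\psi(u)^{-1}\,du$ rests on two facts: $\psi(nun^{-1})=\psi(u)$ for $n\in N_M$ and $u\in U_P$ (since $\psi$ is a character of $N$), and conjugation by $N_M$ preserves Haar measure on $U_P$. Triviality of $\psi$ on the derived subgroup of $U_P$ is not the relevant fact.
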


\begin{proof}
This is Rodier's heredity theorem
\cite[Theorems~1.4 and~1.6]{casselman-shalika}. For induction
from $\overline P$, the open-cell representative is $1$ and
its Levi character is $\psi|_{N_M}$, giving exactly the stated
integral and its unique extension. For a product Levi,
Whittaker uniqueness in each factor identifies the Levi
functional with $\bigotimes_b\ell_b$.
\end{proof}

Put
$$
\sigma_\mu=\boxtimes_b\sigma_b,\qquad
u_\mu^+=\bigotimes_bu_b^+,\qquad
\ell_{M_\mu}=\bigotimes_b\ell_b,\qquad
\mathcal V^{\mathrm{opp}}
=\operatorname{Ind}_{\overline P_\mu}^{G}(\sigma_\mu).
$$
Define $\Phi_\mu^\circ$ to be zero outside
$\overline P_\mu K_\mu$ and set
$$
\Phi_\mu^\circ(\bar p k)
=\delta_{\overline P_\mu}^{1/2}(\bar p)
\sigma_\mu(\bar p)u_\mu^+.
$$
This is well-defined because the Levi projection of
$\overline P_\mu\cap K_\mu$ is $M_\mu(\mathcal O)$: for every
$h$ in this intersection,
$$
\delta_{\overline P_\mu}^{1/2}(h)\sigma_\mu(h)u_\mu^+
=u_\mu^+.
$$
The parahoric factorization
$K_\mu=\overline U_\mu(\mathfrak p)M_\mu(\mathcal O)U_\mu(\mathcal O)$
gives
$$
\overline P_\mu K_\mu=\overline P_\mu U_\mu(\mathcal O),
\qquad
\Phi_\mu^\circ(u)=\mathbf1_{U_\mu(\mathcal O)}(u)u_\mu^+
\quad(u\in U_\mu).
$$
By Lemma~\ref{lem:whittaker-heredity-open}, $\ell_{M_\mu}$
induces $\Lambda_{\mathrm{opp}}$, given on compact open-cell
sections by
$$
\Lambda_{\mathrm{opp}}(F)
=\int_{U_\mu}\ell_{M_\mu}(F(u))\psi(u)^{-1}\,du.
$$
With $\operatorname{vol}(U_\mu(\mathcal O))=1$, the support
calculation and $\ell_b(u_b^+)=1$ yield
$$
\Lambda_{\mathrm{opp}}(\Phi_\mu^\circ)
=\prod_b\ell_b(u_b^+)=1.
$$

\Needspace{5\baselineskip}
The following rank-one intertwiners preserve the chosen Whittaker functional.

\begin{lemma}
\label{lem:rank-one-preservation}
Let $\boldsymbol\eta=(\eta_1,\ldots,\eta_n)$ be an ordered tuple
of unramified characters, with $I(\boldsymbol\eta)$ as above.
For $s=s_i$, the notation $s\boldsymbol\eta$ means that
$\eta_i,\eta_{i+1}$ are exchanged. With $\dot w_0$ the unsigned
permutation representative, use the standard Whittaker functional
$$
\Psi_{\boldsymbol\eta}(f)
=\int_N f(\dot w_0n)\psi(n)^{-1}\,dn
$$
and the standard rank-one operator
$$
\begin{aligned}
M_s(\boldsymbol\eta)&:I(\boldsymbol\eta)\longrightarrow
I(s\boldsymbol\eta),\\
(M_s(\boldsymbol\eta)f)(g)
&=\int_F f(\dot s^{-1}x_{\alpha_i}(x)g)\,dx.
\end{aligned}
$$
The integrals are interpreted by their standard continuation from
a convergence region. Put $z=(\eta_i/\eta_{i+1})(\varpi)$. Then
$$
\Psi_{s\boldsymbol\eta}\circ M_s(\boldsymbol\eta)
=\frac{1-q^{-1}z^{-1}}{1-z}\Psi_{\boldsymbol\eta}.
$$
The operator $(1-z)M_s(\boldsymbol\eta)$ is regular. Hence, whenever
$\eta_i/\eta_{i+1}\neq\nu$, the regularized operator
$$
R_s(\boldsymbol\eta)
=\frac{1-z}{1-q^{-1}z^{-1}}M_s(\boldsymbol\eta)
$$
is defined, including at $z=1$, and satisfies
$$
\Psi_{s\boldsymbol\eta}\circ R_s(\boldsymbol\eta)
=\Psi_{\boldsymbol\eta}.
$$
\end{lemma}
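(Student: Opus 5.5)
The plan is to reduce the identity to a rank-one computation inside the Levi $\GL_2$ of the simple root $\alpha_i$, using uniqueness of Whittaker functionals on $I(s\boldsymbol\eta)$ for generic parameters.

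\textbf{Step 1 (proportionality).} For $\boldsymbol\eta$ in the convergence region ($|z|<1$) and in general position, $I(s\boldsymbol\eta)$ has a one-dimensional space of Whittaker functionals. The composite $\Psi_{s\boldsymbol\eta}\circ M_s(\boldsymbol\eta)$ is an $(N,\psi)$-equivariant functional on $I(\boldsymbol\eta)$. By uniqueness, and since both sides depend holomorphically or rationally on $\boldsymbol\eta$, there is a constant $c(\boldsymbol\eta)$ with $\Psi_{s\boldsymbol\eta}\circ M_s=c\,\Psi_{\boldsymbol\eta}$. This equality then extends by analytic continuation.

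\textbf{Step 2 (reduction to $\GL_2$).} To compute $c$, write $\dot w_0=\dot w'\dot s$ with $\ell(w')=\ell(w_0)-1$, and factor $N=N_{s}'\,x_{\alpha_i}(F)$ accordingly, where $N_s'$ is normalized by $x_{\alpha_i}$. Substituting the integral defining $M_s$ into $\Psi_{s\boldsymbol\eta}$ and changing variables yields a double integral. The inner integral over the $\alpha_i$ root subgroup is the classical $\GL_2$ computation: the Jacquet integral of the standard intertwining transform of a section. Concretely, after this change of variables $c$ equals the $\GL_2$ constant for the character pair $(\eta_i,\eta_{i+1})$ with conductor-zero $\psi$. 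That constant is computed on the $\GL_2$ spherical vector using Casselman--Shalika (or Lemma~\ref{thm:diag} for $n=2$ at $d=1$). On the spherical vector, $\Psi(f^\circ)=(1-q^{-1}z)$ up to the normalization common to both sides. In addition, $M_sf^\circ=\frac{1-q^{-1}z}{1-z}f^\circ_{s\boldsymbol\eta}$ and $\Psi_{s\boldsymbol\eta}(f^\circ_{s\boldsymbol\eta})=1-q^{-1}z^{-1}$. Taking the ratio gives $c=\frac{1-q^{-1}z^{-1}}{1-z}$. I would do this directly in $\GL_n$ with the full spherical vector and the Casselman--Shalika value $\prod_{\alpha>0}(1-q^{-1}\boldsymbol\eta^{\alpha}(\varpi))$. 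Since $s$ permutes the positive roots other than $\alpha_i$, all factors cancel except the $\alpha_i$ one, which avoids the unipotent bookkeeping.

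\textbf{Step 3 (regularity).} The pole of $M_s$ comes only from the $\GL_2$ integral $\int_F f(\dot s^{-1}x(x))\,dx$. On the Iwasawa decomposition, this has the form (holomorphic)$+\frac{\text{const}}{1-z}$, by the standard computation splitting $|x|\le1$ and $|x|>1$. Hence $(1-z)M_s$ is entire in $z$ on the standard flat sections. Dividing by $1-q^{-1}z^{-1}$ is allowed precisely when $z\ne q^{-1}$ in the paper's convention, i.e. $\eta_i/\eta_{i+1}\neq\nu$. This gives $R_s$, including at $z=1$, and the identity $\Psi_{s\boldsymbol\eta}\circ R_s=\Psi_{\boldsymbol\eta}$ follows from Step 1 by continuity, since both sides are holomorphic in $\boldsymbol\eta$ on flat sections.

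The main obstacle is Step 2: matching normalizations, namely the Haar measures on $F$ and $N$, the unsigned representative $\dot s^{-1}$ versus $\dot s$, and $\psi$ having conductor $\mathcal O$. Otherwise the constant could come out as $z\leftrightarrow z^{-1}$ swapped or off by a power of $q$. I would fix these by checking the spherical-vector case carefully in $\GL_2$ first.
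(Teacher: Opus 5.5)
Your proof is correct, but for the main identity it takes a different route from the paper.

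\textbf{The first identity.} The paper does not compute the constant itself. It quotes \cite[Proposition~3.7, equation~(10)]{bbbg-whittaker}, which gives $(1-q^{-1}z^{-1})/(1-z)$ for a lower-unipotent Whittaker functional. It then transfers this to $\Psi_{\boldsymbol\eta}$ via $\Psi_{\boldsymbol\eta}=\Omega^-_{\boldsymbol\eta}\circ I(\boldsymbol\eta)(\dot w_0)$, using that $M_s$ commutes with right translation.

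You instead get proportionality from uniqueness of Whittaker functionals and fix the scalar on the spherical vector. You use $\Psi_{\boldsymbol\eta}(f^\circ_{\boldsymbol\eta})=\prod_{a<b}(1-q^{-1}\eta_a(\varpi)/\eta_b(\varpi))$ and $M_sf^\circ_{\boldsymbol\eta}=\frac{1-q^{-1}z}{1-z}f^\circ_{s\boldsymbol\eta}$. With $\operatorname{vol}(\mathcal O)=1$ and $\psi_F$ of conductor $\mathcal O$, all factors except the $\alpha_i$ one cancel. The ratio is exactly $\frac{1-q^{-1}z^{-1}}{1-z}$. The choice of Weyl representative is harmless, since unramified characters are trivial on $T^0$.

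Your route is self-contained modulo Rodier and Casselman--Shalika. It also independently checks the orientation $z$ versus $z^{-1}$, which the paper inherits through BBBG's lower-unipotent conventions. The paper's route is shorter and fits its later reliance on \cite{bbbg-whittaker}. Your Step~3 is essentially the paper's regularity argument.

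\textbf{Points to tighten.}

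Uniqueness is needed on $I(\boldsymbol\eta)$, the domain of the composite, not on $I(s\boldsymbol\eta)$. This is harmless, since Rodier gives dimension one for every principal series.

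There is no region where both integrals converge absolutely: $M_s(\boldsymbol\eta)$ needs $|z|<1$, while the integral for $\Psi_{s\boldsymbol\eta}$ needs $|z|>1$. So in Step~1, $\Psi_{s\boldsymbol\eta}$ must already be the continued functional, which is still $(N,\psi)$-equivariant.

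Theorem~\ref{thm:diag} cannot supply the value $1-q^{-1}z$, because it is normalized by $W(1)=1$. You need the unnormalized Casselman--Shalika value, or the direct $\GL_2$ shell computation.

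In Step~3, for a non-spherical $H$-fixed flat section, the shell $|x|=q^k$ contributes $(1-q^{-1})z^kf(g)$ only once $k\geq k_0$. Here $k_0$ is chosen so that the lower unipotent factor lies in a normal open subgroup $H_0\subseteq H$ of $K$. The shells with $1\leq k<k_0$ give a polynomial in $z$, so the split should be at $|x|=q^{k_0}$ rather than at $|x|=1$.

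The same tail formula also shows $R_s=\mathrm{id}$ at $z=1$. The paper records this and uses it in Lemma~\ref{lem:regular-reordering}.
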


\begin{proof}
The first identity follows from
\cite[Proposition~3.7, equation~(10)]{bbbg-whittaker}, which gives
the factor $(1-q^{-1}z^{-1})/(1-z)$ for the lower-unipotent
Whittaker functional. Choose its character as
$\psi_-(n_-)=\psi(\dot w_0^{-1}n_-\dot w_0)$. Their functional
$\Omega^-_{\boldsymbol\eta}$ then satisfies
$\Psi_{\boldsymbol\eta}=\Omega^-_{\boldsymbol\eta}
\circ I(\boldsymbol\eta)(\dot w_0)$.
Since $M_s$ intertwines right translation by $\dot w_0$,
the same factor occurs for $\Psi_{\boldsymbol\eta}$.

To check regularity on the full induction, fix a compact open
$H\subset K$ and vary an $H$-fixed section $f_{\boldsymbol\eta}$
with its restriction to $K$ unchanged. Choose an open normal subgroup
$H_0\subset K$ contained in $H$. In the $(i,i+1)$ block,
$$
\begin{pmatrix}0&1\\1&x\end{pmatrix}
=\begin{pmatrix}-x^{-1}&1\\0&x\end{pmatrix}
\begin{pmatrix}1&0\\x^{-1}&1\end{pmatrix}.
$$
The last factor lies in $H_0$ for $|x|$ sufficiently large, uniformly
in $g\in K$. Thus for $|x|=q^k$ with $k$ sufficiently large,
right $H$-invariance and normalized induction give
$f_{\boldsymbol\eta}(\dot s^{-1}x_{\alpha_i}(x)g)
=q^{-k}z^k f_{\boldsymbol\eta}(g)$.
The set $|x|=q^k$ has measure $(1-q^{-1})q^k$, so the tail is
$$
(1-q^{-1})f_{\boldsymbol\eta}(g)\sum_{k\geq k_0}z^k
=\frac{(1-q^{-1})z^{k_0}}{1-z}f_{\boldsymbol\eta}(g).
$$
The remaining compact integral is Laurent polynomial in the parameters.
Thus $(1-z)M_s$ is regular on every $H$-fixed space.
Division by $1-q^{-1}z^{-1}$ gives $R_s$ when
$\eta_i/\eta_{i+1}\neq\nu$; at $z=1$ the tail formula gives
$R_s=\mathrm{id}$.
\end{proof}

\Needspace{5\baselineskip}
These intertwiners allow a regular reordering of the inducing characters.

\begin{lemma}
\label{lem:regular-reordering}
Assume that the segments $\Delta_a$ are pairwise unlinked and
$\lambda_1\geq\cdots\geq\lambda_r$. One can reorder
$w_0\boldsymbol\beta$ into
$\boldsymbol\alpha$ by adjacent exchanges for which
the operators of Lemma~\ref{lem:rank-one-preservation} are regular.
Their composite
$$
A:
I(w_0\boldsymbol\beta)
\longrightarrow I(\boldsymbol\alpha)
$$
satisfies
$$
\Psi_{\boldsymbol\alpha}\circ A
=\Psi_{w_0\boldsymbol\beta}.
$$
\end{lemma}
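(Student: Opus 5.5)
Write $w_0\boldsymbol\beta=\boldsymbol\beta^{\rm rev}$: the characters $\xi_{a,b}$ listed block by block in reverse, i.e.\ the blocks $C_t,\ldots,C_1$ in that order, with $a$ decreasing inside each block. Since $\lambda$ is decreasing, the label $(a,b)$ occurs exactly when $b\le\lambda_a$. By Lemma~\ref{lem:rank-one-preservation}, an adjacent exchange of a pair $(\eta,\eta')$ in positions $(i,i+1)$ is admissible precisely when $\eta/\eta'\neq\nu$. If every exchange in a chain is admissible, the composite $A$ preserves the functionals step by step, $\Psi_{s\boldsymbol\eta}\circ R_s(\boldsymbol\eta)=\Psi_{\boldsymbol\eta}$, so $\Psi_{\boldsymbol\alpha}\circ A=\Psi_{w_0\boldsymbol\beta}$ follows by induction on the number of exchanges. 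The whole task is therefore combinatorial: produce a reduced-word path from $\boldsymbol\beta^{\rm rev}$ to $\boldsymbol\alpha$ in which no swapped pair has ratio $\nu$.

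Each exchange moves a label $(a,b)$ past a label $(a',b')$, where $(a,b)$ stands immediately left of $(a',b')$. The ratio of the corresponding characters is
$$\frac{\xi_{a,b}}{\xi_{a',b'}}=\frac{\chi_a}{\chi_{a'}}\,\nu^{(\lambda_a-\lambda_{a'})/2-b+b'}.$$
I will take the path in which each pair of labels is swapped at most once, i.e.\ only pairs whose relative order differs in the two tuples are swapped. Two cases then arise.

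\textbf{Same segment, $a=a'$.} In $\boldsymbol\beta^{\rm rev}$ the higher column comes first, so $(a,b)$ precedes $(a,b')$ with $b>b'$. In $\boldsymbol\alpha$ the order within segment $a$ is $\xi_{a,\lambda_a},\ldots,\xi_{a,1}$, which is the same order. Hence the path never swaps two characters from the same segment. This is essential, because consecutive members of a segment have ratio exactly $\nu^{\pm1}$.

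\textbf{Different segments, $a\ne a'$.} Suppose a swap puts $\xi_{a,b}$ and $\xi_{a',b'}$ in ratio $\nu$, so $\xi_{a,b}=\nu\,\xi_{a',b'}$. I must show that this forces the two segments $\Delta_a,\Delta_{a'}$ to be linked, or else that the pair is never inverted. If $\xi_{a,b}=\nu\xi_{a',b'}$, then the characters lie on the same $\nu$-line. Two segments on the same line are unlinked precisely when one is contained in the other or they are not juxtaposed. If neither segment contains the other, consecutive characters $\xi_{a',b'}$, $\xi_{a,b}$ straddling would make the union a segment, and hence they would be linked, unless they overlap. This case analysis (nested segments versus disjoint non-juxtaposed segments) is the main obstacle. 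In the nested case, $\lambda_a\ge\lambda_{a'}$ after relabelling, and the pair may well have ratio $\nu$. I then need to choose the order of exchanges so that such a pair is not inverted. The relative order in $\boldsymbol\alpha$ is by $a$. In $\boldsymbol\beta^{\rm rev}$ it is by column $b$ descending and then $a$ descending. The pair is inverted exactly when ($a<a'$ and $b\ge b'$) or ($a>a'$ and $b<b'$). Substituting $\xi_{a,b}=\nu\xi_{a',b'}$ with $\chi_a\nu^{(\lambda_a+1)/2-b}=\chi_{a'}\nu^{(\lambda_{a'}+1)/2-b'+1}$, I expect that the nesting, combined with $\lambda_a\ge\lambda_{a'}$ for $a<a'$, contradicts the inversion condition. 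If a contradiction fails in some configuration, the fallback is this. Unlinked pairs on a common line with equal length must coincide, and equal segments are harmless because $R_s$ at $z=1$ is the identity. Otherwise I will reorder the exchange sequence, first moving whole segments past each other in a chosen nesting-compatible order, which is legitimate since the source $w_0\boldsymbol\beta$ is only required to reach $\boldsymbol\alpha$ by some admissible path.

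Once admissibility is verified for every inverted pair, regularity of each $R_s$ along the path gives the well-defined composite $A$, and the functional identity follows by composing the identities of Lemma~\ref{lem:rank-one-preservation}.
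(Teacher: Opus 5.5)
Your reduction to a combinatorial check is correct, and so is the same-segment case. The gap is in the nested case, which is the heart of the lemma: you only write ``I expect'' a contradiction there, and the inversion criterion you would use is wrong.

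In $w_0\boldsymbol\beta$ the label $(a,b)$ precedes $(a',b')$ iff $b>b'$, or $b=b'$ and $a>a'$. Hence for $a<a'$ the pair is inverted iff $b\le b'$. Your condition reverses this inequality, and it is not even symmetric in the two labels (look at $b=b'$). With the correct criterion, every exchanged pair is, at the moment of its exchange, $(\theta,\theta')=(\xi_{c,d},\xi_{a,b})$ with $a<c$ and $b\le d$; the left entry is the one that stands left in $w_0\boldsymbol\beta$. The missing computation is then short:
\begin{itemize}
\item If $\xi_{c,1}/\xi_{a,1}$ is not an integral power of $\nu$, there is nothing to check.
\item Otherwise unlinkedness leaves two possibilities. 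Either the segments are disjoint and non-juxtaposed, and no ratio $\nu$ occurs at all. Or they are nested, and $\lambda_a\ge\lambda_c$ forces $\Delta_c\subseteq\Delta_a$. Then $\xi_{a,1}=\xi_{c,1}\nu^h$ with $h\ge0$, so $\theta/\theta'=\nu^{-(h+d-b)}$ has nonpositive exponent and is never $\nu$.
\end{itemize}
The orientation is essential. For $\Delta_a=[\nu^{-1},1,\nu]\supset\Delta_c=[1]$ one has $\xi_{a,1}/\xi_{c,1}=\nu$. The column-one exchange is admissible only because $\xi_{c,1}$ stands on the left. (Your non-nested discussion is also muddled: overlapping non-nested segments are linked too. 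Its conclusion is nevertheless right.)

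Your fallback cannot repair a failure, because the choice of path cannot avoid one. Two labels keep their relative order until they are exchanged with each other. So in any path from $w_0\boldsymbol\beta$ to $\boldsymbol\alpha$, the first exchange of an inverted pair happens with its source-left entry on the left. Every reduced word exchanges exactly the inverted pairs, each once and with this orientation. Non-reduced words only add further exchanges and hence further conditions. ``Moving whole segments past each other'' is not available either, since the segments are interleaved across columns in $w_0\boldsymbol\beta$. Nor are equal segments covered by $R_s=\mathrm{id}$ at $z=1$: if $\Delta_a=\Delta_c$, the exchanged pairs with $d>b$ have $z=q^{d-b}\neq1$. They are admissible only because of the orientation computation above, which is exactly the paper's argument.
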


\begin{proof}
In $w_0\boldsymbol\beta$, the index $b$ decreases
and, for fixed $b$, the index $a$ decreases. For a fixed $a$,
the characters already occur as
$\xi_{a,\lambda_a},\ldots,\xi_{a,1}$, the order used for the
Steinberg quotient. Sort the labeled characters by increasing $a$,
preserving their order for each fixed $a$. Every adjacent exchanged pair
is therefore
$$
(\theta,\theta')=(\xi_{c,d},\xi_{a,b}),
\qquad a<c,\quad b\leq d.
$$
We must verify $\theta/\theta'\neq\nu$ for every such exchange.

Here $\nu(x)=|x|_F$, and all products below are products of
unramified characters of $F^\times$. If there is no integer $h$
such that $\xi_{a,1}=\xi_{c,1}\nu^h$, then
$\theta/\theta'\neq\nu$. Otherwise, unlinking means the segments
are disjoint and nonadjacent, or one contains the other.
In the first case no character from one segment has ratio $\nu$
with a character from the other. In the second case
$\lambda_a\geq\lambda_c$ implies $\Delta_c\subseteq\Delta_a$,
so for some integer $h\geq0$ we have
$$
\xi_{a,1}=\xi_{c,1}\nu^h,\qquad
\frac{\theta}{\theta'}
=\frac{\xi_{c,d}}{\xi_{a,b}}
=\nu^{-(h+d-b)}
\neq\nu.
$$
Thus Lemma~\ref{lem:rank-one-preservation} supplies a regular
operator for every exchange. Among the exchanged pairs, coincidence occurs
only in the containment case: $\xi_{c,d}=\xi_{a,b}$ precisely when $h=0$
and $b=d$ (same upper endpoint and same column).
Then $z=1$ and $R_s=\mathrm{id}$ by
Lemma~\ref{lem:rank-one-preservation}.

Write the successive tuples as
$\boldsymbol\eta^{(0)}=w_0\boldsymbol\beta$,
$\boldsymbol\eta^{(j)}=s_{i_j}\boldsymbol\eta^{(j-1)}$,
and $\boldsymbol\eta^{(L)}=\boldsymbol\alpha$, and put
$R_j=R_{s_{i_j}}(\boldsymbol\eta^{(j-1)})$.
Lemma~\ref{lem:rank-one-preservation} gives
$\Psi_{\boldsymbol\eta^{(j)}}\circ R_j
=\Psi_{\boldsymbol\eta^{(j-1)}}$ for each $j$.
Consequently $A=R_L\circ\cdots\circ R_1$ satisfies, by successive
substitution,
$$
\Psi_{\boldsymbol\alpha}\circ A
=\Psi_{\boldsymbol\eta^{(L)}}\circ R_L\circ\cdots\circ R_1
=\Psi_{\boldsymbol\eta^{(0)}}
=\Psi_{w_0\boldsymbol\beta}.
$$
\end{proof}

We now transfer the opposite-model Whittaker functional to the Steinberg-induced representation.

\begin{lemma}
\label{lem:opposite-quotient}
Under the hypotheses above, there is a surjective $G$-map
$$
\mathscr Q:
\mathcal V^{\mathrm{opp}}
\twoheadrightarrow\pi_{\lambda,\chi}
$$
which can be normalized so that
$$
\Lambda_\pi\circ\mathscr Q
=\Lambda_{\mathrm{opp}}.
$$
For this normalization,
$$
\mathscr Q\Phi_\mu^\circ
=\frac{v_\mu}{\Lambda_\pi(v_\mu)},
\qquad \Lambda_\pi(v_\mu)\neq0.
$$
For every $g\in G$,
$$
\boxed{
\frac{\Lambda_\pi(\pi_{\lambda,\chi}(g)v_\mu)}
{\Lambda_\pi(v_\mu)}
=
\Lambda_{\mathrm{opp}}
\left(\mathcal V^{\mathrm{opp}}(g)\Phi_\mu^\circ\right).
}
$$
\end{lemma}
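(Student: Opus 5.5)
Here is the plan for Lemma~\ref{lem:opposite-quotient}. I would build $\mathscr Q$ as a composite of three maps. First, induction in stages and the irreducibility of each $\sigma_b$ identify $\mathcal V^{\mathrm{opp}}=\operatorname{Ind}_{\overline P_\mu}^G(\sigma_\mu)$ with $\operatorname{Ind}_{\overline B}^G(\boldsymbol\beta)$. Right translation by $\dot w_0$ carries this to $I(w_0\boldsymbol\beta)$, since $\dot w_0\overline B\dot w_0^{-1}=B$. Second, I compose with the regular operator $A:I(w_0\boldsymbol\beta)\to I(\boldsymbol\alpha)$ of Lemma~\ref{lem:regular-reordering}. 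Third, I compose with $Q_\lambda:I(\boldsymbol\alpha)\twoheadrightarrow\pi_{\lambda,\chi}$. The result is a $G$-map $\mathscr Q=Q_\lambda\circ A\circ\iota$, where $\iota$ denotes the identification in the first step.

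For the Whittaker compatibility, I compare functionals along the chain. By Lemma~\ref{lem:whittaker-heredity-open}, the space $\operatorname{Hom}_N(\mathcal V^{\mathrm{opp}},\psi)$ is one-dimensional. Under $\iota$, the standard functional $\Psi_{w_0\boldsymbol\beta}$ pulls back to a multiple of $\Lambda_{\mathrm{opp}}$. Lemma~\ref{lem:regular-reordering} gives $\Psi_{\boldsymbol\alpha}\circ A=\Psi_{w_0\boldsymbol\beta}$. Since $\pi_{\lambda,\chi}$ is irreducible and generic, $\Psi_{\boldsymbol\alpha}$ factors through $Q_\lambda$ up to a scalar: $\ker Q_\lambda$ has no nonzero Whittaker functional on its generic-free part. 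So $\Lambda_\pi\circ\mathscr Q=c\,\Lambda_{\mathrm{opp}}$, and if $c\neq0$ we rescale $\mathscr Q$.

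The main obstacle is proving $c\neq0$ and that $\mathscr Q$ is surjective, and I would handle both at once. Because $\pi_{\lambda,\chi}$ is irreducible, it suffices to show $\mathscr Q\neq0$. Nonvanishing of $c$ is equivalent to $\Psi_{\boldsymbol\alpha}$ not vanishing on the image of $A\circ\iota$. Since $I(\boldsymbol\alpha)$ has a unique generic constituent, namely $\pi_{\lambda,\chi}$ via $Q_\lambda$, the argument would go as follows. The functional $\Lambda_{\mathrm{opp}}$ is nonzero, so $\Psi_{\boldsymbol\alpha}\circ A\circ\iota\neq0$. The image of $A\circ\iota$ is therefore a subrepresentation carrying a nonzero Whittaker functional, so it contains the generic constituent. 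The functional $\Psi_{\boldsymbol\alpha}$ kills every nongeneric subquotient, so it factors through $Q_\lambda$ as $\Lambda_\pi$ (after scaling), and $c\neq0$. The delicate point is that $\Psi_{\boldsymbol\alpha}$ factors through the quotient $Q_\lambda$ itself, which needs the generic constituent to appear as a quotient rather than a sub. I expect this to follow from the Steinberg ordering chosen in $\boldsymbol\alpha$ (the standard-module convention) together with multiplicity one of the generic constituent.

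Finally, $\mathscr Q\Phi_\mu^\circ$ is $K_\mu$-fixed, so by Proposition~\ref{prop:ferrers-fixed-line} it equals $c'v_\mu$. Applying $\Lambda_\pi$ gives $c'\Lambda_\pi(v_\mu)=\Lambda_{\mathrm{opp}}(\Phi_\mu^\circ)=1$. Hence $\Lambda_\pi(v_\mu)\neq0$ and $c'=\Lambda_\pi(v_\mu)^{-1}$. The boxed identity then follows from $G$-equivariance: $\Lambda_\pi(\pi(g)\mathscr Q\Phi_\mu^\circ)=\Lambda_\pi\mathscr Q(\mathcal V^{\mathrm{opp}}(g)\Phi_\mu^\circ)=\Lambda_{\mathrm{opp}}(\mathcal V^{\mathrm{opp}}(g)\Phi_\mu^\circ)$.
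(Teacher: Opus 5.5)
Your architecture is the paper's: the composite $\mathcal V^{\mathrm{opp}}\xrightarrow{C}I(w_0\boldsymbol\beta)\xrightarrow{A}I(\boldsymbol\alpha)\xrightarrow{Q_\lambda}\pi_{\lambda,\chi}$, the one-dimensional Whittaker space of $\mathcal V^{\mathrm{opp}}$, and the final step via Proposition~\ref{prop:ferrers-fixed-line} all match. The gap is the step you leave as an expectation, which is exactly the one the proof hinges on. The justification you sketch does not work as written.

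A functional on $I(\boldsymbol\alpha)$ does not ``kill nongeneric subquotients''. What you need is that $\Psi_{\boldsymbol\alpha}$ vanishes on the subrepresentation $\ker Q_\lambda$. No argument about standard-module ordering or sub-versus-quotient position is required, because the surjection $Q_\lambda$ onto the generic $\pi_{\lambda,\chi}$ is already part of the data. The missing observation is this. Since $Q_\lambda$ is onto and $\Lambda_\pi\neq0$, the functional $\Lambda_\pi\circ Q_\lambda$ is a nonzero element of $\operatorname{Hom}_N(I(\boldsymbol\alpha),\psi)$. By Rodier's heredity this space is one-dimensional, so $\Lambda_\pi\circ Q_\lambda=a\Psi_{\boldsymbol\alpha}$ with $a\neq0$. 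Lemma~\ref{lem:regular-reordering} then gives $\Lambda_\pi\circ Q_\lambda\circ A\circ C=a\Psi_{w_0\boldsymbol\beta}\circ C$, which is nonzero because $\Psi_{w_0\boldsymbol\beta}\neq0$ and $C$ is an isomorphism. Rescaling gives the normalization, and irreducibility of $\pi_{\lambda,\chi}$ gives surjectivity at once.

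There are two smaller slips. First, the identification must be \emph{left} translation, $(CF)(g)=F(\dot w_0g)$. Right translation by $\dot w_0$ is not $G$-equivariant and leaves the inducing data unchanged. With left translation and $\dot w_0^2=1$ one gets $\Psi_{w_0\boldsymbol\beta}(CF)=\int_N F(n)\psi(n)^{-1}\,dn$. Second, your inference ``$\Lambda_{\mathrm{opp}}\neq0$, so $\Psi_{\boldsymbol\alpha}\circ A\circ C\neq0$'' runs backwards, since it presupposes that the proportionality constant is nonzero. Nonvanishing comes from $\Psi_{w_0\boldsymbol\beta}\neq0$ and the bijectivity of $C$, as above. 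Once these are fixed, your last paragraph is exactly the paper's conclusion.
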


\begin{proof}
Induction in stages identifies
$\mathcal V^{\mathrm{opp}}$
with $\operatorname{Ind}_{\overline B}^G
(\boldsymbol\beta)$. The map
$$
C:\operatorname{Ind}_{\overline B}^G
(\boldsymbol\beta)
\xrightarrow{\ \sim\ }I(w_0\boldsymbol\beta),
\qquad (CF)(g)=F(\dot w_0g),
$$
is $G$-equivariant and therefore preserves right $K_\mu$-invariance.
Explicitly,
$$
\Psi_{w_0\boldsymbol\beta}(CF)
=\int_N F(n)\psi(n)^{-1}\,dn.
$$
Here $\dot w_0^2=1$.
This left-translation map is distinct from the inverse-transpose
pullback of Lemma~\ref{lem:opposite-basis-transport}.
Compose $C$ with the regular reordering
of Lemma~\ref{lem:regular-reordering} and the Steinberg quotient:
$$
\mathcal V^{\mathrm{opp}}
\xrightarrow{C}I(w_0\boldsymbol\beta)
\xrightarrow{A}I(\boldsymbol\alpha)
\xrightarrow{Q_\lambda}\pi_{\lambda,\chi}.
$$
Since $Q_\lambda$ is surjective, heredity gives
$\Lambda_\pi Q_\lambda=a\Psi_{\boldsymbol\alpha}$
with $a\neq0$. Hence
$$
\Lambda_\pi Q_\lambda AC
=a\Psi_{w_0\boldsymbol\beta}C\neq0.
$$
The last functional is nonzero because $C$ is an isomorphism.
The Whittaker space of $\mathcal V^{\mathrm{opp}}$
is one-dimensional by Lemma~\ref{lem:whittaker-heredity-open};
rescaling the composite therefore gives
$\Lambda_\pi\mathscr Q=\Lambda_{\mathrm{opp}}$.
Its nonzero image is all of the irreducible $\pi_{\lambda,\chi}$.

Now
$$
\Lambda_\pi(\mathscr Q\Phi_\mu^\circ)
=\Lambda_{\mathrm{opp}}(\Phi_\mu^\circ)=1.
$$
Thus $\mathscr Q\Phi_\mu^\circ$ is a nonzero
$K_\mu$-fixed vector. Proposition~\ref{prop:ferrers-fixed-line}
therefore implies $\mathscr Q\Phi_\mu^\circ=c v_\mu$, with $c\Lambda_\pi(v_\mu)=1$.
This proves the normalization.
Applying $\Lambda_\pi\mathscr Q
=\Lambda_{\mathrm{opp}}$ to a right translate of
$\Phi_\mu^\circ$ proves the formula for every $g$.
\end{proof}

Write $W_\mu^\circ(g)$ for the common value in Lemma~\ref{lem:opposite-quotient};
then $W_\mu^\circ(1)=1$. The identity $\Lambda_\pi\mathscr Q=\Lambda_{\mathrm{opp}}$
also gives $\Lambda_{\mathrm{opp}}(\ker\mathscr Q)=0$.

\begin{proposition}[Diagonal values]
\label{prop:diagonal-opposite-model}
For every $d\in T^+$, one has
$$
\boxed{
W_\mu^\circ(d)
=\delta_{P_\mu}^{1/2}(d)
\prod_{b=1}^{t}W_{\sigma_b}^{\circ}(d^{(b)}).
}
$$
Equivalently, if
$$
d_f=\operatorname{diag}(\varpi^{f_1},\ldots,\varpi^{f_n})\in T^+,
\qquad
f^{(b)}=(f_{m_{b-1}+1},\ldots,f_{m_b}),
$$
then
$$
\boxed{
W_\mu^\circ(d_f)
=\delta_B^{1/2}(d_f)
\prod_{b=1}^{t}s_{f^{(b)}}(z^{(b)}).
}
$$
\end{proposition}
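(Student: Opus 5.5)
We compute $W_\mu^\circ(d)=\Lambda_{\mathrm{opp}}(\mathcal V^{\mathrm{opp}}(d)\Phi_\mu^\circ)$ directly in the opposite model, using the open-cell integral of Lemma~\ref{lem:whittaker-heredity-open}. The section $F=\mathcal V^{\mathrm{opp}}(d)\Phi_\mu^\circ$ is $F(g)=\Phi_\mu^\circ(gd)$, so for $u\in U_\mu$ we write $ud=d\,(d^{-1}ud)$ and use $d\in M_\mu$ to obtain
$$
F(u)=\delta_{\overline P_\mu}^{1/2}(d)\,\sigma_\mu(d)\,\Phi_\mu^\circ(d^{-1}ud).
$$
By the support computation $\Phi_\mu^\circ|_{U_\mu}=\mathbf1_{U_\mu(\mathcal O)}u_\mu^+$, this equals $\delta_{\overline P_\mu}^{1/2}(d)\sigma_\mu(d)u_\mu^+$ when $u\in dU_\mu(\mathcal O)d^{-1}$ and vanishes otherwise. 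Thus $F$ is compactly supported on the open cell, and the integral formula applies literally.

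Substituting into $\Lambda_{\mathrm{opp}}(F)=\int_{U_\mu}\ell_{M_\mu}(F(u))\psi(u)^{-1}\,du$ gives
$$
\delta_{\overline P_\mu}^{1/2}(d)\,\ell_{M_\mu}\bigl(\sigma_\mu(d)u_\mu^+\bigr)\int_{dU_\mu(\mathcal O)d^{-1}}\psi(u)^{-1}\,du .
$$
For $d\in T^+$, conjugation contracts: $dU_\mu(\mathcal O)d^{-1}\subseteq U_\mu(\mathcal O)$. Hence $\psi$ is trivial on the domain, because $\psi_F$ is trivial on $\mathcal O$, and the integral equals $\operatorname{vol}(dU_\mu(\mathcal O)d^{-1})=\delta_{P_\mu}(d)$, the modulus of conjugation on $U_\mu$ with $\operatorname{vol}(U_\mu(\mathcal O))=1$. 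Since $\delta_{\overline P_\mu}=\delta_{P_\mu}^{-1}$ on $M_\mu$, the prefactor becomes $\delta_{P_\mu}^{-1/2}(d)\delta_{P_\mu}(d)=\delta_{P_\mu}^{1/2}(d)$.

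The remaining factor is $\ell_{M_\mu}(\sigma_\mu(d)u_\mu^+)=\prod_b\ell_b(\sigma_b(d^{(b)})u_b^+)=\prod_bW_{\sigma_b}^\circ(d^{(b)})$, which proves the first boxed formula. For the second, each $d^{(b)}$ is dominant in $\GL_{\mu_b}$ and $\sigma_b$ is unramified with Satake parameter $z^{(b)}$. The Shintani--Casselman--Shalika formula with $\ell_b(u_b^+)=1$ then gives $W_{\sigma_b}^\circ(d^{(b)})=\delta_{B_{\mu_b}}^{1/2}(d^{(b)})s_{f^{(b)}}(z^{(b)})$. Combining this with $\delta_B=\delta_{P_\mu}\prod_b\delta_{B_{\mu_b}}$, as in the remark after Theorem~\ref{thm:diag}, yields the Schur form.

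The main obstacle is a normalization check. We must confirm that the Satake parameter convention in the formula $W^\circ_{\sigma_b}=\delta^{1/2}s_f(z)$ matches the characters $\xi_{a,b}(\varpi)$ for the upper-Borel realization in which $u_b^+$ was defined; this is the point where the ordering of $z^{(b)}$ and possible inversion conventions could enter. Because Schur polynomials are symmetric, only inversion matters. Inversion is fixed by evaluating at $d=d_1$ within one block and comparing with the Hecke eigenvalue of $\sigma_b$ under normalized induction.
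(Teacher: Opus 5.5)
Your proposal is correct and follows the paper's proof essentially step for step. You translate $\Phi_\mu^\circ$ by $d$ and read it on the open cell as supported on $dU_\mu(\mathcal O)d^{-1}$ with prefactor $\delta_{P_\mu}^{-1/2}(d)$. Dominance makes $\psi$ trivial there, and the volume $\delta_{P_\mu}(d)$ gives $\delta_{P_\mu}^{1/2}(d)\prod_b W_{\sigma_b}^\circ(d^{(b)})$. Casselman--Shalika in each block together with $\delta_B=\delta_{P_\mu}\prod_b\delta_{B_{\mu_b}}$ then yields the Schur form. Your extra normalization check is harmless: the paper simply cites the spherical formula, and the central-character value $W_{\sigma_b}^\circ(\varpi I_{\mu_b})=\det z^{(b)}$ already excludes the inverted convention.
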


\begin{proof}
By Lemma~\ref{lem:opposite-quotient}, evaluate $W_\mu^\circ$ on
$\Phi_\mu^\circ$.
For $u\in U_\mu$ and $d\in T^+$,
$$
\Phi_\mu^\circ(ud)
=\delta_{P_\mu}^{-1/2}(d)
\mathbf1_{dU_\mu(\mathcal O)d^{-1}}(u)
\sigma_\mu(d)u_\mu^+.
$$
This is a compact open-cell section. Dominance makes $\psi$
trivial on $dU_\mu(\mathcal O)d^{-1}$, whose volume is
$\delta_{P_\mu}(d)$. Heredity therefore gives
$$
\begin{aligned}
W_\mu^\circ(d)
&=\delta_{P_\mu}^{-1/2}(d)
\ell_{M_\mu}(\sigma_\mu(d)u_\mu^+)
\int_{dU_\mu(\mathcal O)d^{-1}}\psi(u)^{-1}\,du\\
&=\delta_{P_\mu}^{1/2}(d)
\prod_b\ell_b(\sigma_b(d^{(b)})u_b^+)\\
&=\delta_{P_\mu}^{1/2}(d)
\prod_b W_{\sigma_b}^{\circ}(d^{(b)}).
\end{aligned}
$$
Apply the spherical Casselman--Shalika formula
\cite[Theorem~5.4]{casselman-shalika} in each Levi factor and
use $\delta_B(d)=\delta_{P_\mu}(d)
\prod_b\delta_{B_{\mu_b}}(d^{(b)})$ to obtain the Schur formula.
\end{proof}

\subsection{Whittaker values on every Bruhat cell}
\label{sec:all-bruhat-cells}

We retain the hypotheses and opposite induced model of
Section~\ref{sec:opposite-induced-model}: the segments $\Delta_a$
are pairwise unlinked, $\mu=\lambda^t$, and every $\sigma_b$ is
irreducible. Our Whittaker functions use the fixed character
$\psi$ of the upper unipotent group $N$; the auxiliary lower-unipotent
functional below makes the comparison with \cite{bbbg-whittaker} explicit.

Write the inducing tuple of that model as
$$
\boldsymbol\beta=(\beta_1,\ldots,\beta_n),
\qquad
\mathbf z_0=(\beta_1(\varpi),\ldots,\beta_n(\varpi)).
$$
The parameters are explicitly
$$
(\mathbf z_0)_{m_{b-1}+a}
=\chi_a(\varpi)q^{\,b-(\lambda_a+1)/2},
\qquad 1\leq a\leq\mu_b.
$$
Thus the entries indexed by $C_b$ are exactly the parameters
$z^{(b)}$ of the spherical representation $\sigma_b$.
We use $\mathbf z=(z_1,\ldots,z_n)$ for independent variables
until the final specialization $\mathbf z=\mathbf z_0$.

For an arbitrary unramified tuple $\boldsymbol\eta$ with
$\eta_i(\varpi)=z_i$, define the scalar opposite principal series
$$
\overline I(\boldsymbol\eta)
=\operatorname{Ind}_{\overline B}^{G}(\boldsymbol\eta).
$$
Thus its sections satisfy
$$
F(\bar b g)=\delta_{\overline B}^{1/2}(\bar b)
\boldsymbol\eta(\bar b)F(g),
\qquad
\bigl(\overline I(\boldsymbol\eta)(g)F\bigr)(h)=F(hg).
$$
To distinguish the compact subgroups in the two conventions, put
$$
J^+=J,\qquad K_\mu^+=K_\mu,\qquad
J^-=\operatorname{red}^{-1}(\overline B(\mathfrak f)),\qquad
K_\mu^-=\operatorname{red}^{-1}(\overline P_\mu(\mathfrak f)).
$$
The groups denoted by $J$ and $K_{S_\mu}$ in
\cite[Section~3 and Definition~4.2]{bbbg-whittaker} are $J^-$ and
$K_\mu^-$, respectively. The involution
$$
\iota(g)={}^{t}g^{-1}
$$
commutes with reduction and satisfies
$$
\begin{aligned}
\iota(B)&=\overline B,& \iota(N)&=N^-,&
\iota(J^+)&=J^-,& \iota(K_\mu^+)&=K_\mu^-,\\
\iota(\dot y)&=\dot y,&
\iota(d_f\dot w)&=d_{-f}\dot w.
\end{aligned}
$$
In particular, the involution exchanges the upper and lower compact
subgroups with the same ordered block sizes; it does not fix $J$ or
$K_\mu$.

\Needspace{5\baselineskip}
Pullback by the involution transports fixed sections between the two conventions.

\begin{lemma}
\label{lem:opposite-basis-transport}
The map
$$
\mathcal T_{\boldsymbol\eta}:
\overline I(\boldsymbol\eta)\longrightarrow
I(\boldsymbol\eta^{-1}),
\qquad
(\mathcal T_{\boldsymbol\eta}F)(h)=F(\iota(h)),
$$
is a linear isomorphism satisfying
$$
\mathcal T_{\boldsymbol\eta}\overline I(\boldsymbol\eta)(g)
=I(\boldsymbol\eta^{-1})(\iota(g))\mathcal T_{\boldsymbol\eta}.
$$
It identifies the $J^+$-fixed space with the $J^-$-fixed space,
and the $K_\mu^+$-fixed space with the $K_\mu^-$-fixed space.
Let $\Phi_y^{\mathbf z^{-1}}$ be the standard section in
\cite[Section~3, following Remark~3.2]{bbbg-whittaker}, and define
$$
\bar f_y^{\boldsymbol\eta}
=\mathcal T_{\boldsymbol\eta}^{-1}\Phi_y^{\mathbf z^{-1}}.
$$
These sections form a basis of $\overline I(\boldsymbol\eta)^J$,
with
$$
\operatorname{supp}(\bar f_y^{\boldsymbol\eta})
=\overline B\dot yJ,
\qquad
\bar f_y^{\boldsymbol\eta}(\bar b\dot yj)
=\delta_{\overline B}^{1/2}(\bar b)\boldsymbol\eta(\bar b).
$$
Let $\mathbf W^\mu$ be the minimal representatives of $\mathbf W/W_\mu$.
The $K_\mu$-fixed space has basis
$$
f_{x,\mu}^{\boldsymbol\eta}
=\sum_{y\in W_\mu}\bar f_{xy}^{\boldsymbol\eta},
\qquad x\in \mathbf W^\mu,
$$
where $f_{x,\mu}^{\boldsymbol\eta}(\dot x)=1$ and
$$
\operatorname{supp}(f_{x,\mu}^{\boldsymbol\eta})
=\overline B\dot xK_\mu
=\bigsqcup_{y\in W_\mu}\overline B\dot x\dot yJ.
$$
These assertions hold for every unramified tuple, including reducible
principal series. In particular,
$$
\dim\overline I(\boldsymbol\eta)^J=|\mathbf W|,
\qquad
\dim\overline I(\boldsymbol\eta)^{K_\mu}=|\mathbf W/W_\mu|.
$$
\end{lemma}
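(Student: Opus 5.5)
We first check directly that $\mathcal T_{\boldsymbol\eta}$ is well defined and intertwining, then transport the basis statements from \cite{bbbg-whittaker} back along it. For $F\in\overline I(\boldsymbol\eta)$ and $b\in B$ we have $\iota(b)\in\overline B$ with the same diagonal part inverted. Hence $\boldsymbol\eta(\iota(b))=\boldsymbol\eta^{-1}(b)$ and $\delta_{\overline B}(\iota(b))=\delta_B(b)$, since conjugation by $\iota$ carries positive roots to negative ones and the diagonal to its inverse. This gives
$$
(\mathcal T_{\boldsymbol\eta}F)(bh)=F(\iota(b)\iota(h))=\delta_B^{1/2}(b)\boldsymbol\eta^{-1}(b)(\mathcal T_{\boldsymbol\eta}F)(h),
$$
so $\mathcal T_{\boldsymbol\eta}F\in I(\boldsymbol\eta^{-1})$. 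Because $\iota$ is a continuous automorphism preserving $K$, smoothness is preserved. The map is invertible because $\iota^2=1$, and the same computation runs backwards. The intertwining relation is the identity
$$
(\mathcal T F(\cdot g))(h)=F(\iota(h)g)=F(\iota(h\iota(g))).
$$
Since $\iota(J^+)=J^-$ and $\iota(K_\mu^+)=K_\mu^-$, right invariance under $J^+$ (resp.\ $K_\mu^+$) corresponds exactly to right invariance under $J^-$ (resp.\ $K_\mu^-$).

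For the Iwahori basis, we invoke the description in \cite[Section~3]{bbbg-whittaker}: $\Phi_y^{\mathbf z^{-1}}$ is supported on $B\dot yJ^-$ and takes value $1$ at $\dot y$, and these sections form a basis of $I(\boldsymbol\eta^{-1})^{J^-}$. This rests on the disjoint decomposition $G=\bigsqcup_y B\dot yJ^-$ together with the fact that every character is trivial on $B\cap\dot yJ^-\dot y^{-1}$, which is compact with unramified image. Pulling back by $\iota$ and using $\iota(\dot y)=\dot y$ shows that $\bar f_y^{\boldsymbol\eta}$ is supported on $\overline B\dot yJ^+$ and has the stated values. The dimension count $|\mathbf W|$ follows, and no irreducibility of the principal series enters at this stage.

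For $K_\mu$, note first that $K_\mu=\bigsqcup_{y\in W_\mu}J\dot yJ$. We will check that $\overline B\dot xK_\mu=\bigsqcup_{y\in W_\mu}\overline B\dot x\dot yJ$ for $x\in\mathbf W^\mu$. This is the key step, and it reduces to a double-coset statement: $\overline B\dot xJ\dot yJ\subseteq\bigcup_{y'\le y}\overline B\dot x\dot y'J$. Combined with the decomposition $G=\bigsqcup_{w}\overline B\dot wJ$, it shows that the sets $\overline B\dot xK_\mu$, for $x\in\mathbf W^\mu$, partition $G$. A $K_\mu$-fixed section is therefore determined by its values at the points $\dot x$, because on $\overline B\dot x K_\mu$ it is determined by its value at $\dot x$. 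This gives the dimension bound $|\mathbf W/W_\mu|$.

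It remains to show that each $f_{x,\mu}^{\boldsymbol\eta}$ is actually $K_\mu$-invariant. We define $f$ directly as the function $\bar b\dot xk\mapsto\delta_{\overline B}^{1/2}\boldsymbol\eta(\bar b)$ on $\overline B\dot xK_\mu$. It is well defined because $\overline B\cap\dot xK_\mu\dot x^{-1}$ is compact, so the unramified character is trivial on it; this uses that $\dot x^{-1}$ sends the torus into $T^0$-compatible pieces. The function $f$ is $J$-fixed, so it is a combination of the $\bar f_{w}$. Evaluating at $\dot x\dot y$, where $f$ equals $1$ because $\dot y\in K_\mu$, shows that all coefficients in $x W_\mu$ are $1$ and all others are $0$. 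Hence $f=f_{x,\mu}^{\boldsymbol\eta}$, which proves the claimed basis.
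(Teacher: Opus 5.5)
Your proposal is correct. The first two parts agree with the paper's proof: you verify that $\mathcal T_{\boldsymbol\eta}$ lands in $I(\boldsymbol\eta^{-1})$, check the intertwining identity and the fixed-space correspondence via $\iota(J^+)=J^-$ and $\iota(K_\mu^+)=K_\mu^-$, and pull back the Iwahori basis of \cite{bbbg-whittaker} using $\iota(\dot y)=\dot y$.

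The parahoric part is where you differ. The paper simply transports the $K_\mu^-$-basis constructed in the proof of \cite[Proposition~4.3]{bbbg-whittaker}. You instead derive the $K_\mu$ assertions from the Iwahori basis alone. First you prove the cell identity $\overline B\dot xK_\mu=\bigsqcup_{y\in W_\mu}\overline B\dot x\dot yJ$, which shows that the sets $\overline B\dot xK_\mu$ partition $G$ and gives the dimension bound. Then you identify the directly defined section with $\sum_{y}\bar f_{xy}^{\boldsymbol\eta}$ by evaluating at the points $\dot w$, where the $\bar f_w^{\boldsymbol\eta}$ are dual to evaluation. This route is self-contained and shows plainly why irreducibility never enters. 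The paper's route is shorter, but it leaves the reader to match BBBG's parahoric sections, which are stated for upper $B$ and $K_\mu^-$, with the transported sums.

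Two points need tidying. First, the inclusion $\overline B\dot xJ\dot yJ\subseteq\bigcup_{y'\le y}\overline B\dot x\dot y'J$ is asserted, not proved. Justify it by reducing mod $\mathfrak p$ and writing $\overline B(\mathfrak f)=w_0B(\mathfrak f)w_0$. Then apply $BuB\cdot ByB\subseteq\bigcup_{y'\le y}Buy'B$ with $u=w_0x$ and multiply on the left by $w_0$. You also need that $y'\le y\in W_\mu$ forces $y'\in W_\mu$, so that the union stays inside $\bigcup_{y'\in W_\mu}\overline B\dot x\dot y'J$. Second, for well-definedness the correct reason is simply that $\dot x\in K$. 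Hence $\overline B\cap\dot xK_\mu\dot x^{-1}\subseteq\overline B\cap K$, whose diagonal part lies in $T^0$, where $\delta_{\overline B}^{1/2}\boldsymbol\eta$ is trivial. The remark about ``$T^0$-compatible pieces'' does not mean anything and should be dropped.
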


\begin{proof}
For $b\in B$, one has
$\delta_{\overline B}(\iota(b))=\delta_B(b)$ and
$\boldsymbol\eta(\iota(b))=\boldsymbol\eta^{-1}(b)$.
Consequently
$$
(\mathcal T_{\boldsymbol\eta}F)(bh)
=\delta_B^{1/2}(b)\boldsymbol\eta^{-1}(b)
(\mathcal T_{\boldsymbol\eta}F)(h).
$$
The action identity follows from $\iota(hg)=\iota(h)\iota(g)$;
the fixed-space identities follow from the subgroup identities above.
The inverse is pullback by the same involution.

Transport the standard basis of
\cite[Section~3, following Remark~3.2]{bbbg-whittaker} and the
parahoric section basis constructed in the proof of
\cite[Proposition~4.3]{bbbg-whittaker}. Their support and summation
formulas give all the remaining assertions, since
$\iota(\dot y)=\dot y$. These are statements about induced sections,
before applying a Whittaker functional, so they do not require
irreducibility or injectivity of a Whittaker realization.
\end{proof}

Use the upper-unipotent functional
$$
\Omega_{\boldsymbol\eta}(f)
=\int_N f(n)\psi(n)^{-1}\,dn,
$$
initially on compactly supported open-cell sections and then by
Whittaker heredity, or equivalently by the standard continuation
of the integral. Each positive-root coordinate has
$\operatorname{vol}(\mathcal O)=1$. Write
$$
\mathcal W_y(\mathbf z;g)
=\Omega_{\boldsymbol\eta}
\left(\overline I(\boldsymbol\eta)(g)\bar f_y^{\boldsymbol\eta}\right).
$$
The two indices play different roles: $y$ specifies the section,
whereas in $\mathcal W_y(\mathbf z;d_f\dot w)$ the element $w$
specifies the cell on which it is evaluated.

\Needspace{5\baselineskip}
The same involution transfers the Whittaker functionals with compatible measures.

\begin{lemma}
\label{lem:opposite-whittaker-transport}
Choose the lower-unipotent character and functional by
$$
\psi_-(n_-)=\psi(\iota(n_-)),
\qquad
\Omega^-_{\boldsymbol\eta^{-1}}(H)
=\int_{N^-}H(n_-)\psi_-(n_-)^{-1}\,dn_-.
$$
With $\operatorname{vol}(N(\mathcal O))
=\operatorname{vol}(N^-(\mathcal O))=1$, one has
$$
\Omega^-_{\boldsymbol\eta^{-1}}
\left(I(\boldsymbol\eta^{-1})(\iota(g))
\mathcal T_{\boldsymbol\eta}F\right)
=\Omega_{\boldsymbol\eta}
\left(\overline I(\boldsymbol\eta)(g)F\right).
$$
In the normalization of \cite[equation~(2)]{bbbg-whittaker}, this gives
$$
\boxed{
\mathcal W_y(\mathbf z;d_f\dot w)
=\delta_B^{1/2}(d_f)\,
\phi_y(\mathbf z;d_{-f}\dot w).
}
$$
\end{lemma}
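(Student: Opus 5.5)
The plan is to prove the first identity by a change of variables under $\iota$, and then to obtain the boxed formula by specializing it to $g=d_f\dot w$ and $F=\bar f_y^{\boldsymbol\eta}$.

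\emph{Step 1: the general identity.} By Lemma~\ref{lem:opposite-basis-transport}, for any section $F\in\overline I(\boldsymbol\eta)$ we have
$$
I(\boldsymbol\eta^{-1})(\iota(g))\mathcal T_{\boldsymbol\eta}F=\mathcal T_{\boldsymbol\eta}\bigl(\overline I(\boldsymbol\eta)(g)F\bigr).
$$
It therefore suffices to prove $\Omega^-_{\boldsymbol\eta^{-1}}(\mathcal T_{\boldsymbol\eta}H)=\Omega_{\boldsymbol\eta}(H)$ for every $H\in\overline I(\boldsymbol\eta)$. Evaluating the left side gives
$$
\Omega^-_{\boldsymbol\eta^{-1}}(\mathcal T_{\boldsymbol\eta}H)=\int_{N^-}H(\iota(n_-))\,\psi_-(n_-)^{-1}\,dn_- .
$$
Substitute $n=\iota(n_-)\in N$. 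By definition $\psi_-(n_-)=\psi(n)$. The map $\iota:N^-\to N$ is a topological group isomorphism carrying $N^-(\mathcal O)$ onto $N(\mathcal O)$. It therefore transports the Haar measure normalized by $\operatorname{vol}(N^-(\mathcal O))=1$ to the one with $\operatorname{vol}(N(\mathcal O))=1$, and the integral becomes $\int_N H(n)\psi(n)^{-1}\,dn=\Omega_{\boldsymbol\eta}(H)$.

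This computation is literal on compactly supported open-cell sections. Both sides are Whittaker functionals that agree on this subspace, so they agree everywhere. The justification is that the open cell $\overline B N$ corresponds under $\iota$ to $BN^-$, and heredity (Lemma~\ref{lem:whittaker-heredity-open}) determines a functional from its values on such sections. Alternatively, one can argue that both sides are obtained by the same standard analytic continuation in $\mathbf z$ from a convergence region, and the identity holds there.

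\emph{Step 2: specialization.} Take $F=\bar f_y^{\boldsymbol\eta}$, so that $\mathcal T_{\boldsymbol\eta}F=\Phi_y^{\mathbf z^{-1}}$, and take $g=d_f\dot w$, so that $\iota(g)=d_{-f}\dot w$. Step~1 then gives
$$
\mathcal W_y(\mathbf z;d_f\dot w)=\Omega^-_{\boldsymbol\eta^{-1}}\bigl(I(\boldsymbol\eta^{-1})(d_{-f}\dot w)\Phi_y^{\mathbf z^{-1}}\bigr).
$$
Next compare this with \cite[equation~(2)]{bbbg-whittaker}. There the Whittaker function $\phi_y(\mathbf z;g)$ is defined by a lower-unipotent integral of the standard section, and its parameter convention differs from ours by the modulus factor. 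I expect their definition to read
$$
\phi_y(\mathbf z;g)=\delta_B^{-1/2}(\text{torus part})\,\Omega^-\bigl(\text{right translate of }\Phi_y^{\mathbf z^{-1}}\bigr),
$$
or an equivalent rescaling. Evaluated at $d_{-f}\dot w$, this should produce exactly $\delta_B^{1/2}(d_f)$; note that $\delta_B^{1/2}(d_{-f})^{-1}=\delta_B^{1/2}(d_f)$.

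\emph{Main obstacle.} The hard step is Step~2: matching conventions with \cite{bbbg-whittaker}. Four things must be checked there: their character $\psi_-$, their measure normalization, whether their induced space is normalized with $\delta^{1/2}$ and uses $\mathbf z$ or $\mathbf z^{-1}$, and the precise factor in their equation~(2). I would first check the case $f=0$, $w=1$, $y=1$, where both sides are computed directly from the support of $\bar f_1$ and the volume normalization. I would then verify the modulus factor by translating by $d_f$ on the left. For $n\in N$, $\mathcal W_y(\mathbf z;d_f\dot w)$ is unaffected by the conjugation $d_f n d_f^{-1}$ only up to a character factor. The $\delta_B^{1/2}$ then arises from comparing the Jacobian of $n_-\mapsto d_{-f}n_-d_f$ with the normalized-induction factor.
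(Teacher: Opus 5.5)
Your Step~1 is correct and is exactly the paper's argument: the substitution $n=\iota(n_-)$, with $\iota(N^-(\mathcal O))=N(\mathcal O)$ matching the normalized measures. Your extension by uniqueness also works, since $\psi_-(\iota(n))=\psi(n)$ makes $\Omega^-_{\boldsymbol\eta^{-1}}\circ\mathcal T_{\boldsymbol\eta}$ an $(N,\psi)$-functional on $\overline I(\boldsymbol\eta)$, and that Whittaker space is one-dimensional.

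The gap is Step~2, which is the only content of the boxed formula beyond Step~1. The factor there does not come from the representation theory. It is the normalization built into \cite[equation~(2)]{bbbg-whittaker}, and the normalization you guess has the wrong sign in the exponent. Suppose $\phi_y(\mathbf z;g)$ were $\delta_B^{-1/2}$ of the torus part of $g$ times the lower-unipotent integral. Then at $g=d_{-f}\dot w$ your Step~1 would give $\phi_y(\mathbf z;d_{-f}\dot w)=\delta_B^{-1/2}(d_{-f})\,\mathcal W_y(\mathbf z;d_f\dot w)=\delta_B^{1/2}(d_f)\,\mathcal W_y(\mathbf z;d_f\dot w)$. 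That is, $\mathcal W_y(\mathbf z;d_f\dot w)=\delta_B^{-1/2}(d_f)\,\phi_y(\mathbf z;d_{-f}\dot w)$, the reciprocal of the claim. You produced the number $\delta_B^{1/2}(d_f)$, but on the $\phi_y$ side of the equation. The paper uses that equation~(2) multiplies the integral by $\delta_B^{1/2}$ of the torus part, namely $\delta_B^{1/2}(d_{-f})=\delta_B^{-1/2}(d_f)$. With Step~1 and $\mathcal T_{\boldsymbol\eta}\bar f_y^{\boldsymbol\eta}=\Phi_y^{\mathbf z^{-1}}$, this gives the box at once.

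Your proposed checks would not have settled this. The case $f=0$ cannot see the modulus factor, since $\delta_B(d_0)=1$. The conjugation computation is informative, but it points against your guess. Substituting $n=d_fn'd_f^{-1}$ in $\int_N\bar f_y^{\boldsymbol\eta}(nd_f\dot w)\psi(n)^{-1}\,dn$ produces the Jacobian $\delta_B(d_f)$ and the induction factor $\delta_{\overline B}^{1/2}(d_f)\boldsymbol\eta(d_f)=\delta_B^{-1/2}(d_f)\mathbf z^f$. So $\mathcal W_y(\mathbf z;d_f\dot w)$ carries exactly $\delta_B^{1/2}(d_f)$, as in Proposition~\ref{prop:diagonal-opposite-model}. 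A normalization that strips this modulus character must therefore multiply the integral by $\delta_B^{-1/2}(d_f)$, not by $\delta_B^{1/2}(d_f)$; compare the modulus-free values in the transported formula of Lemma~\ref{lem:all-cell-initial}. Even so, this only makes the convention plausible. The proof consists in reading the factor off their definition, which is all the paper does.

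Note also that the inversion $\mathbf z\mapsto\mathbf z^{-1}$ is already absorbed by $\mathcal T_{\boldsymbol\eta}$. It is not a separate modulus-factor discrepancy, contrary to your remark that their parameter convention differs from ours by the modulus factor.
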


\begin{proof}
The character $\psi_-$ has the conductor required in
\cite[Section~3]{bbbg-whittaker}. The involution carries
$N^-(\mathcal O)$ onto $N(\mathcal O)$, hence preserves the
normalized Haar measures. The change of variables $n=\iota(n_-)$ gives
$$
\begin{aligned}
&\Omega^-_{\boldsymbol\eta^{-1}}
\left(I(\boldsymbol\eta^{-1})(\iota(g))
\mathcal T_{\boldsymbol\eta}F\right)\\
&\quad=\int_{N^-}F(\iota(n_-)g)
\psi(\iota(n_-))^{-1}\,dn_-\\
&\quad=\int_N F(ng)\psi(n)^{-1}\,dn.
\end{aligned}
$$
This identity holds in the convergence region and extends with the
Whittaker integrals. By Lemma~\ref{lem:opposite-basis-transport},
$\mathcal T_{\boldsymbol\eta}\bar f_y^{\boldsymbol\eta}
=\Phi_y^{\mathbf z^{-1}}$. The definition in
\cite[equation~(2)]{bbbg-whittaker} uses this parameter
$\mathbf z^{-1}$ and multiplies the integral by
$\delta_B^{1/2}(d_{-f})=\delta_B^{-1/2}(d_f)$.
This proves the displayed comparison, with their weight equal to $f$
and their Weyl indices equal to $y,w$.
\end{proof}

Induction in stages expresses our fixed section as a sum of scalar Iwahori sections.

\begin{lemma}
\label{lem:scalar-fixed-section}
Realize each $\sigma_b$ as an opposite principal series and normalize
its spherical section to have value $1$ at the identity.
Under induction in stages,
$$
\mathcal V^{\mathrm{opp}}
\simeq\overline I(\boldsymbol\beta),
$$
the section $\Phi_\mu^\circ$ becomes
$$
f_\mu^\circ
=\sum_{y\in W_\mu}\bar f_y^{\boldsymbol\beta}.
$$
Moreover, $\Omega_{\boldsymbol\beta}(f_\mu^\circ)\neq0$
and for every $g\in G$,
$$
W_\mu^\circ(g)
=
\frac{\displaystyle\sum_{y\in W_\mu}
\mathcal W_y(\mathbf z_0;g)}
{\displaystyle\sum_{y\in W_\mu}
\mathcal W_y(\mathbf z_0;1)}.
$$
\end{lemma}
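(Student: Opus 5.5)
The proof has three steps. First identify $\Phi_\mu^\circ$ with $f_\mu^\circ$ under induction in stages. Then check that the normalizing denominator is nonzero. Finally read off $W_\mu^\circ(g)$ from Lemma~\ref{lem:opposite-quotient}.

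\textbf{Step 1: identifying the section.} Induction in stages sends $F\in\mathcal V^{\mathrm{opp}}$ to the scalar function $\tilde F(g)=\mathrm{ev}_1(F(g))$. Here each $\sigma_b$ is realized as $\operatorname{Ind}_{\overline B_{\mu_b}}(\xi_{1,b},\ldots,\xi_{\mu_b,b})$, and $\mathrm{ev}_1$ is evaluation of the tensor of sections at the identity of $M_\mu$. The factors $\delta_{\overline P_\mu}^{1/2}$ and $\prod_b\delta_{\overline B_{\mu_b}}^{1/2}$ combine to $\delta_{\overline B}^{1/2}$, so $\tilde F\in\overline I(\boldsymbol\beta)$.

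The image $\tilde\Phi$ of $\Phi_\mu^\circ$ is right $K_\mu$-invariant. By Lemma~\ref{lem:opposite-basis-transport} it is therefore a combination $\sum_{x\in\mathbf W^\mu}c_x f_{x,\mu}^{\boldsymbol\beta}$, and $c_x=\tilde\Phi(\dot x)$ because $f_{x,\mu}^{\boldsymbol\beta}(\dot x)=1$ and the supports $\overline B\dot xK_\mu$ are disjoint. The support of $\Phi_\mu^\circ$ is $\overline P_\mu K_\mu$. I claim this equals $\overline B K_\mu$: since $M_\mu=\overline B_M M_\mu(\mathcal O)$ by the Iwasawa decomposition in the Levi, we get $\overline P_\mu=\overline B M_\mu(\mathcal O)\overline U_\mu$, and $M_\mu(\mathcal O)\subset K_\mu$. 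Hence $c_x=0$ for $x\neq1$, and
$$
c_1=\tilde\Phi(1)=\mathrm{ev}_1(u_\mu^+)=1
$$
by the chosen normalization of the spherical sections. This gives $\tilde\Phi=f_{1,\mu}^{\boldsymbol\beta}=\sum_{y\in W_\mu}\bar f_y^{\boldsymbol\beta}$. Note that $1$ is the minimal representative of $W_\mu$.

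\textbf{Step 2: the functional and the denominator.} Under this identification, $\Lambda_{\mathrm{opp}}$ and $\Omega_{\boldsymbol\beta}$ are both Whittaker functionals for the upper character $\psi$ on the same space. I would argue this directly rather than appealing to one-dimensionality, since irreducibility of the full principal series is not assumed.

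On sections supported in the open cell $\overline B N$, write $N=N_M U_\mu$. Then
$$
\Omega_{\boldsymbol\beta}(\tilde F)=\int_{U_\mu}\int_{N_M}\tilde F(n_M u)\psi^{-1}\,dn_M\,du .
$$
The inner integral is the standard $\overline B_{\mu_b}$-Jacquet integral, which equals $\ell_{M_\mu}(F(u))$ once $\ell_b$ is normalized by $\ell_b(u_b^+)=1$. Both sides of the identity extend from the convergence region by continuation, so it holds in general.

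This gives $\Omega_{\boldsymbol\beta}=c\,\Lambda_{\mathrm{opp}}$, and the constant $c$ is fixed by evaluating both sides on $\Phi_\mu^\circ$. That evaluation also settles the nonvanishing: on the compact section $\Phi_\mu^\circ$ supported in $\overline P_\mu U_\mu(\mathcal O)$, the computation preceding Lemma~\ref{lem:rank-one-preservation} gives $\Lambda_{\mathrm{opp}}(\Phi_\mu^\circ)=1$. The same integral, done scalarwise, shows that
$$
\Omega_{\boldsymbol\beta}(f_\mu^\circ)=\prod_b\ell_b(u_b^+)=1\neq0 .
$$

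\textbf{Step 3: the formula.} Translation commutes with the identification. Lemma~\ref{lem:opposite-quotient} then gives
$$
W_\mu^\circ(g)=\Lambda_{\mathrm{opp}}(\mathcal V^{\mathrm{opp}}(g)\Phi_\mu^\circ)
=\frac{\Omega_{\boldsymbol\beta}\bigl(\overline I(\boldsymbol\beta)(g)f_\mu^\circ\bigr)}{\Omega_{\boldsymbol\beta}(f_\mu^\circ)}.
$$
Expanding $f_\mu^\circ$ by Step 1 and using linearity yields the stated ratio of sums of $\mathcal W_y(\mathbf z_0;g)$.

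The main obstacle is the precise matching in Step 2: the continuation of $\Omega_{\boldsymbol\beta}$ at the specialization $\mathbf z_0$ must agree with heredity, given that the full scalar principal series may be reducible there. I would handle this by working with generic $\mathbf z$ and then specializing. Both functionals are polynomial in $\mathbf z$ on $J$-fixed sections, and the identity $\Omega=c\,\Lambda_{\mathrm{opp}}$ is checked on the single section $f_\mu^\circ$.
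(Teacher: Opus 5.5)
Your Steps 1 and 3 follow the paper's proof: evaluation at the Levi identity, $\overline P_\mu K_\mu=\overline BK_\mu$, value $1$ at the identity, then division using Lemma~\ref{lem:opposite-quotient}. The gap is in Step 2. There you assert that the inner $N_M$-integral ``equals $\ell_{M_\mu}(F(u))$ once $\ell_b$ is normalized by $\ell_b(u_b^+)=1$'', and conclude $\Omega_{\boldsymbol\beta}(f_\mu^\circ)=\prod_b\ell_b(u_b^+)=1$. This conflates two normalizations. The inner integral is the Jacquet functional $J_b(\phi)=\int_{N_{\mu_b}}\phi(n)\psi(n)^{-1}\,dn$ on the opposite-Borel model of $\sigma_b$. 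Uniqueness on the irreducible $\sigma_b$ gives $J_b=J_b(u_b^+)\,\ell_b$, but $J_b(u_b^+)$ is not $1$ in general. The section $\Phi_\mu^\circ$ is compactly supported modulo $\overline P_\mu$. Its scalar version, however, restricts on $N_M$ to the spherical vectors $u_b^+$, which are not compactly supported there. So ``the same integral, done scalarwise'' is a full Casselman--Shalika integral. Already for $\GL_2$ one has $u^+(x_{1,2}(t))=q^{-k}(z_2/z_1)^k$ for $|t|=q^k\geq q$, whence $J(u^+)=1-vz_2/z_1$. In general $\Omega_{\boldsymbol\beta}(f_\mu^\circ)=\prod_bJ_b(u_b^+)=\mathfrak a_\mu(\mathbf z_0)$, exactly as Lemma~\ref{lem:block-symmetrizer} records. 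This is identically $1$ only in the Iwahori case $\mu=(1,\ldots,1)$.

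The ratio formula does not depend on this constant, but it requires the constant to be nonzero. Your only argument for $\Omega_{\boldsymbol\beta}(f_\mu^\circ)\neq0$ is the false value $1$. Also, checking on the single section $f_\mu^\circ$, as in your last paragraph, can only fix the constant; proportionality itself must come from uniqueness. The repair is short. One option is to keep your factorization and note that $J_b(u_b^+)=\prod_{i<j\in C_b}(1-vz_j/z_i)$ at $\mathbf z_0$ is nonzero, because irreducibility of $\sigma_b$ excludes $\beta_i/\beta_j=\nu^{\pm1}$. The other is to argue as the paper does. Your worry about one-dimensionality is unfounded: Lemma~\ref{lem:whittaker-heredity-open} needs only the irreducibility of the $\sigma_b$, not of $\overline I(\boldsymbol\beta)$. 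Then $\Omega_{\boldsymbol\beta}(\bar f_1^{\boldsymbol\beta})=1$, from \cite[Proposition~3.6]{bbbg-whittaker} transported by Lemma~\ref{lem:opposite-whittaker-transport}, shows that $\Omega_{\boldsymbol\beta}=c\,\Lambda_{\mathrm{opp}}$ with $c\neq0$. Hence $\Omega_{\boldsymbol\beta}(f_\mu^\circ)=c\,\Lambda_{\mathrm{opp}}(\Phi_\mu^\circ)=c\neq0$.
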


\begin{proof}
Induction in stages uses the chosen opposite principal-series
realizations of the $\sigma_b$. Its scalar section is obtained by
evaluating the Levi section at the identity. It has value $1$ on
$K_\mu$, and Levi Iwasawa decomposition gives
$$
\overline P_\mu=\overline B M_\mu(\mathcal O),
\qquad
\overline P_\mu K_\mu=\overline B K_\mu.
$$
Thus it is $f_{1,\mu}^{\boldsymbol\beta}$ of
Lemma~\ref{lem:opposite-basis-transport}, proving the section identity.

Proposition~3.6 of \cite{bbbg-whittaker} and
Lemma~\ref{lem:opposite-whittaker-transport}, at $y=w=1$ and $f=0$,
give $\Omega_{\boldsymbol\beta}(\bar f_1^{\boldsymbol\beta})=1$.
Whittaker heredity therefore makes $\Omega_{\boldsymbol\beta}$ a
nonzero multiple of the transported $\Lambda_{\mathrm{opp}}$.
Since $\Lambda_{\mathrm{opp}}(\Phi_\mu^\circ)=1$, the denominator
is nonzero. Normalize by it and apply
Lemma~\ref{lem:opposite-quotient}.
\end{proof}

We next give explicit operators for evaluating the individual
functions $\mathcal W_y$. Set $v=q^{-1}$ and let $s_i$ act on a
Laurent polynomial by exchanging $z_i,z_{i+1}$. Define
$$
\boxed{
\mathscr D_i F(\mathbf z)
=
\frac{(1-v)F(\mathbf z)
-(1-vz_{i+1}/z_i)F(s_i\mathbf z)}
{z_i/z_{i+1}-1}.
}
$$
All these operators act on the parameter variables, not on $g$.

This is the operator $\mathfrak T_i$ of
\cite[equation~(11)]{bbbg-whittaker}. The following two lemmas
record their results in the conventions established by
Lemma~\ref{lem:opposite-whittaker-transport}.

\begin{lemma}
\label{lem:demazure-recurrence}
The operators $\mathscr D_i$ preserve
$\mathbb C[z_1^{\pm1},\ldots,z_n^{\pm1}]$, satisfy the braid
relations, and satisfy
$$
\mathscr D_i^2=(v-1)\mathscr D_i+v,
\qquad
\mathscr D_i^{-1}=v^{-1}\mathscr D_i+(v^{-1}-1).
$$
For every $g\in G$,
$$
\mathcal W_{s_i y}(\mathbf z;g)
=
\begin{cases}
\mathscr D_i\mathcal W_y(\mathbf z;g),
&\ell(s_i y)=\ell(y)+1,\\
\mathscr D_i^{-1}\mathcal W_y(\mathbf z;g),
&\ell(s_i y)=\ell(y)-1.
\end{cases}
$$
In particular, if $y=s_{i_1}\cdots s_{i_a}$ is reduced, then
$$
\mathscr D_y=\mathscr D_{i_1}\cdots\mathscr D_{i_a}
$$
is independent of the reduced expression and
$\mathcal W_y=\mathscr D_y\mathcal W_1$.
\end{lemma}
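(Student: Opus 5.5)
The statement has two layers. The algebraic assertions about the operators $\mathscr D_i$ are intrinsic: Laurent preservation, the braid relations, and the quadratic relation. The recurrence for $\mathcal W_y$ is then transported from \cite{bbbg-whittaker} through Lemma~\ref{lem:opposite-whittaker-transport}.

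First, Laurent preservation. The numerator $(1-v)F(\mathbf z)-(1-vz_{i+1}/z_i)F(s_i\mathbf z)$ vanishes on the hyperplane $z_i=z_{i+1}$, since both terms reduce to $(1-v)F$ there. Hence it is divisible by $z_i-z_{i+1}$ in the Laurent ring, and therefore by $z_i/z_{i+1}-1$ up to a unit.

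Second, the quadratic relation. Write $\mathscr D_i=a(\mathbf z)+b(\mathbf z)s_i$ with
$$a=\frac{1-v}{z_i/z_{i+1}-1},\qquad b=-\frac{1-vz_{i+1}/z_i}{z_i/z_{i+1}-1}.$$
One computes $\mathscr D_i^2=(a^2+b\,s_i(b))+(ab+b\,s_i(a))s_i$ and compares with $(v-1)\mathscr D_i+v$. Setting $x=z_i/z_{i+1}$, this reduces to two rational identities in the single variable $x$. The inverse formula then follows by dividing the quadratic relation by $v$.

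Third, the braid relations. The commuting case $|i-j|>1$ is trivial. For $|i-j|=1$ I would argue that $\mathscr D_i$ is a Demazure--Lusztig operator, namely $\mathscr D_i=v\,T_i^{-1}$ for a standard normalization of $T_i$, up to the change $z\mapsto z^{-1}$ or a conjugation. The braid relation can then be quoted (Lusztig), or verified directly in the three-variable ring. Independence of the reduced expression follows by Matsumoto's theorem.

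Fourth, the recurrence. In the case $\ell(s_iy)=\ell(y)+1$, \cite[Theorem/Proposition on $\mathfrak T_i$, equation~(11)]{bbbg-whittaker} states $\phi_{s_iy}(\mathbf z;g')=\mathfrak T_i\phi_y(\mathbf z;g')$ for their parameter and their convention. Via Lemma~\ref{lem:opposite-whittaker-transport} we have
$$\mathcal W_y(\mathbf z;d_f\dot w)=\delta_B^{1/2}(d_f)\,\phi_y(\mathbf z;d_{-f}\dot w),$$
and the prefactor is independent of $\mathbf z$. Hence the recurrence transfers cell by cell, and by the Iwasawa and Bruhat decomposition together with right $J$-invariance it holds for every $g$.

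The descending case follows by applying the ascending case to $s_iy$ and inverting $\mathscr D_i$. Finally, $\mathcal W_y=\mathscr D_y\mathcal W_1$ follows by induction along a reduced word.

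The main obstacle is the convention matching. One must confirm that $\mathfrak T_i$ in \cite{bbbg-whittaker} acts on their parameter, and that after Lemma~\ref{lem:opposite-basis-transport} the relevant parameter is $\mathbf z$ rather than $\mathbf z^{-1}$. One must also check that left versus right multiplication ($s_iy$ versus $ys_i$) agrees with their indexing, since the section index and the cell index play different roles. I would test this on $\mathrm{GL}_2$ directly: compute $\mathcal W_{s}(\mathbf z;1)$ and $\mathcal W_{s}(\mathbf z;\dot s)$ by the explicit open-cell integral, and compare with $\mathscr D_1\mathcal W_1$. This pins down any inversion or sign before invoking the general result.
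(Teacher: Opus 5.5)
Your proposal is correct and follows the paper's route. The recurrence is \cite[Theorem~3.8]{bbbg-whittaker} transported through Lemma~\ref{lem:opposite-whittaker-transport}, whose $\mathbf z$-independent modulus factor commutes with $\mathscr D_i$, and the convention matching you flag as the main obstacle ($\mathbf z$ versus $\mathbf z^{-1}$, and section index versus cell index) is already settled by that lemma together with Lemma~\ref{lem:opposite-basis-transport}.

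The only difference is that you verify Laurent preservation and the quadratic relation by hand, and both computations check out. The braid relations are most directly seen from $\mathscr D_i=(1-vz_{i+1}/z_i)\partial_i-1$ being itself a Demazure--Lusztig operator, so no $vT_i^{-1}$ detour is needed. You also obtain the descending case by inversion, whereas the paper simply cites \cite[equations~(11)--(13)]{bbbg-whittaker} for all of this.
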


\begin{proof}
These are \cite[equations~(11)--(13) and Theorem~3.8]{bbbg-whittaker}
transported by Lemma~\ref{lem:opposite-whittaker-transport}. The modular factor
is independent of $\mathbf z$, so it commutes with the operators.
Iteration gives $\mathcal W_y=\mathscr D_y\mathcal W_1$.
\end{proof}

The recurrence gives the following formula on each Iwahori cell.

\begin{lemma}
\label{lem:all-cell-initial}
For $w\in\mathbf W$ let
$$
\mathcal C_w=
\left\{f\in\mathbb Z^n:
f_i-f_{i+1}\geq
\begin{cases}
0,&w^{-1}(i)<w^{-1}(i+1),\\
-1,&w^{-1}(i)>w^{-1}(i+1)
\end{cases}
\text{ for every }i\right\}.
$$
For every $y\in\mathbf W$,
$$
\boxed{
\mathcal W_y(\mathbf z;d_f\dot w)
=
q^{-\ell(w)}\delta_B^{1/2}(d_f)
\mathscr D_y\mathscr D_w^{-1}(\mathbf z^f),
\qquad f\in\mathcal C_w.
}
$$
For $f\notin\mathcal C_w$, every
$\mathcal W_y(\mathbf z;d_f\dot w)$ is zero.
Here $\mathscr D_w^{-1}$ means the inverse of the operator
$\mathscr D_w$; for a reduced expression
$w=s_{i_1}\cdots s_{i_a}$ it is
$\mathscr D_{i_a}^{-1}\cdots\mathscr D_{i_1}^{-1}$.
\end{lemma}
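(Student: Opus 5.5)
By Lemma~\ref{lem:demazure-recurrence}, $\mathcal W_y=\mathscr D_y\mathcal W_1$ as functions of $\mathbf z$ for each fixed $g$. Since $\mathscr D_y$ acts only on the parameters, it suffices to prove the boxed formula for $y=1$ and then apply $\mathscr D_y$ to both sides. The factor $q^{-\ell(w)}\delta_B^{1/2}(d_f)$ does not depend on $\mathbf z$, so it commutes with $\mathscr D_y$. The vanishing statement for $f\notin\mathcal C_w$ passes from $y=1$ to every $y$ for the same reason, because $\mathscr D_y$ applied to the zero function is zero. The vanishing can also be seen directly: the argument of Lemma~\ref{supportlemma} with $S=\emptyset$ (so $K_S=J$) shows that a right $J$-invariant Whittaker function vanishes at $d_f\dot w$ unless $\langle\alpha_i,f\rangle\geq-1$ when $w^{-1}\alpha_i<0$ and $\geq0$ otherwise. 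Here $w^{-1}\alpha_i<0$ means exactly $w^{-1}(i)>w^{-1}(i+1)$, so this condition is $f\in\mathcal C_w$.

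For $y=1$, I would use Lemma~\ref{lem:opposite-whittaker-transport}:
\[
\mathcal W_1(\mathbf z;d_f\dot w)=\delta_B^{1/2}(d_f)\,\phi_1(\mathbf z;d_{-f}\dot w).
\]
Then I would quote the Iwahori Whittaker formula of \cite{bbbg-whittaker} for the section $\phi_1$ (their Theorem~3.8 together with Proposition~3.6 and the recurrence in $w$). That formula expresses $\phi_1(\mathbf z;\varpi^{\lambda}\dot w)$ as $q^{-\ell(w)}$ times an inverse Demazure--Whittaker operator applied to a monomial, on the appropriate cone of weights. Equivalently, one can argue internally in this paper. At $w=1$ and dominant $f$, $\mathcal W_1(\mathbf z;d_f)=\delta_B^{1/2}(d_f)\mathbf z^f$: the section $\bar f_1$ is supported on the big cell $\overline BJ$, so the integral over $N$ reduces to $N(\mathcal O)$ after conjugating by $d_f$, and the value comes out as in the proof of Proposition~\ref{prop:diagonal-opposite-model}. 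Then I would induct on $\ell(w)$. Write $w=s_iw'$ with $\ell(w)=\ell(w')+1$, and relate $\mathcal W_1(\cdot;d_f\dot s_i\dot w')$ to $\mathcal W_1(\cdot;d_f\dot w')$ through the right action of $J\dot s_iJ$. Alternatively, use the symmetry $g\mapsto g\dot w^{-1}$ to trade the cell index $w$ for the section index. By Lemma~\ref{lem:demazure-recurrence} this produces exactly one factor $\mathscr D_i^{-1}$ and one factor $q^{-1}$.

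The main obstacle is the bookkeeping of conventions. Three things have to be tracked: the inversion $\mathbf z\mapsto\mathbf z^{-1}$ and $f\mapsto -f$ introduced by $\iota$, whether \cite{bbbg-whittaker} writes the cell operator as $\mathscr D_{w^{-1}}$ or $\mathscr D_w^{-1}$, and whether the cone in that source matches $\mathcal C_w$ or its mirror image under $\iota$. I would first fix these by checking the case $n=2$ by hand. With $w=s_1$, the cone allows $f_1-f_2=-1$. I would compute $\Omega(\overline I(g)\bar f_1)$ directly for $g=d_f\dot s_1$ and confirm that it equals $q^{-1}\delta_B^{1/2}\mathscr D_1^{-1}(\mathbf z^f)$, which also pins down the power of $q$. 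Once this case is settled, the general case follows by the reduced-word induction above, since the braid relations make $\mathscr D_w^{-1}$ well defined.
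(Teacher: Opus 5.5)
Your reduction to $y=1$ via Lemma~\ref{lem:demazure-recurrence} and your vanishing argument (Lemma~\ref{supportlemma} with $S=\emptyset$) are correct. Quoting the Brubaker--Buciumas--Bump--Gustafsson formula after the transport of Lemma~\ref{lem:opposite-whittaker-transport} is what the paper does.

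The gap is in the self-contained argument that your last sentence relies on: an induction on the \emph{cell} index $w$ with the section index fixed at $y=1$. Lemma~\ref{lem:demazure-recurrence} moves only the section index, so it cannot supply this inductive step. In fact no step of the form ``value at $d_f\dot s_i\dot w'$ equals an operator in $\mathbf z$ applied to the value at $d_f\dot w'$'' can hold. Since $i$ is a left descent of $w=s_iw'$ but not of $w'$, the cone $\mathcal C_w$ contains points with $f_i-f_{i+1}=-1$ lying outside $\mathcal C_{w'}$, where every cell-$w'$ value vanishes.

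Concretely, take $n=2$, $w=s_1$, $f=(0,1)$. Then $\mathcal W_1(\mathbf z;d_f)=0$, but the lemma gives $\mathcal W_1(\mathbf z;d_f\dot s_1)=q^{-1}\delta_B^{1/2}(d_f)\mathscr D_1^{-1}(z_2)=-q^{-1}\delta_B^{1/2}(d_f)z_2\neq0$. The right action of $J\dot s_1J$ degenerates exactly at such points: $\sum_{t\in\mathfrak f}\mathcal W_1(\mathbf z;d_fx_{\alpha_1}(\tilde t)\dot s_1)=\sum_{t}\psi_F(\varpi^{-1}\tilde t)\,\mathcal W_1(\mathbf z;d_f\dot s_1)=0$. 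Right translation by $\dot w^{-1}$ does not preserve $J$-fixed sections, so it does not exchange cell and section indices either. The operator order is also wrong: $\mathscr D_w^{-1}=\mathscr D_{w'}^{-1}\mathscr D_i^{-1}$, so the new factor acts first on the monomial, not last on $\mathcal W_1(\mathbf z;d_f\dot w')$. Fixing conventions at $n=2$ does not repair any of this.

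The missing idea is to start on the diagonal $y=w$ and move only the section index. Your $w=1$ computation generalizes verbatim. By the Iwahori factorization of $\dot wJ\dot w^{-1}$ against $\overline B$ and $N$, one has $\bar f_w^{\boldsymbol\eta}(nd_f\dot w)\neq0$ if and only if $d_f^{-1}nd_f\in N\cap\dot wJ\dot w^{-1}$. This group has volume $q^{-\ell(w)}$: its root coordinates lie in $\mathfrak p$ exactly for the $\ell(w)$ roots $\beta>0$ with $w^{-1}\beta<0$. Moreover $\psi$ is trivial on its $d_f$-conjugate exactly when $f\in\mathcal C_w$. Hence
\[
\mathcal W_w(\mathbf z;d_f\dot w)=q^{-\ell(w)}\delta_B^{1/2}(d_f)\mathbf z^f
\]
on the whole cone. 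No continuation is needed, since the integrand is compactly supported. This is their Proposition~3.6, and it is where both $q^{-\ell(w)}$ and $\mathcal C_w$ come from.

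Then, at the fixed point $g=d_f\dot w$, Lemma~\ref{lem:demazure-recurrence} gives $\mathcal W_1=\mathscr D_w^{-1}\mathcal W_w$ and $\mathcal W_y=\mathscr D_y\mathcal W_1$. This is the boxed formula, and it specializes at every parameter because the $\mathscr D_i$ preserve Laurent polynomials. This path $w\to1\to y$ is their Corollary~3.9, as cited in the paper.
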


\begin{proof}
Under Lemma~\ref{lem:opposite-whittaker-transport}, $\mathcal C_w$ is precisely
the $w$-almost-dominant cone of \cite[Definition~3.4]{bbbg-whittaker}.
The formula is their Corollary~3.9, using the path $w\to1\to y$;
its initial value at $y=w$ is their Proposition~3.6.
Vanishing is their Lemma~3.5, and their Corollary~3.10 gives
regular specialization at every parameter. Thus no dominance
condition beyond $f\in\mathcal C_w$ is imposed.
\end{proof}

Define the products over the positive roots of the Levi $M_\mu$ by
$$
\begin{aligned}
\Delta_\mu(\mathbf z)
&=\prod_{b=1}^{t}\prod_{m_{b-1}+1\leq i<j\leq m_b}
(1-z_j/z_i),\\
\mathfrak a_\mu(\mathbf z)
&=\prod_{b=1}^{t}\prod_{m_{b-1}+1\leq i<j\leq m_b}
(1-vz_j/z_i).
\end{aligned}
$$
Thus $i<j$ in every factor. Define the two operators
$$
\mathcal Z_\mu=\sum_{y\in W_\mu}\mathscr D_y,
\qquad
\mathcal P_\mu F
=\sum_{y\in W_\mu}y\left(\frac{F}{\Delta_\mu}\right).
$$
Here $\mathcal Z_\mu$ is the sum of the Demazure--Whittaker operators,
and $\mathcal P_\mu$ is the block Weyl symmetrizer.
The permutation action is
$(yF)(\mathbf z)=F(y^{-1}\mathbf z)$, with
$(y\mathbf z)_i=z_{y^{-1}(i)}$.

Block symmetrization combines the Iwahori sections and determines their normalization.

\begin{lemma}
\label{lem:block-symmetrizer}
For every Laurent polynomial $F$,
$$
\boxed{
\mathcal Z_\mu F
=\sum_{y\in W_\mu}\mathscr D_y F
=\mathfrak a_\mu\,\mathcal P_\mu F.
}
$$
The function $\mathcal P_\mu F$ is a Laurent polynomial and
$$
\mathcal P_\mu1=1,\qquad
\mathcal Z_\mu1=\mathfrak a_\mu.
$$
If each block signature $f^{(b)}$ is dominant, then
$$
\mathcal P_\mu(\mathbf z^f)
=\prod_{b=1}^{t}s_{f^{(b)}}(\mathbf z|_{C_b}).
$$
In particular,
$$
\Omega_{\boldsymbol\beta}(f_\mu^\circ)
=\mathfrak a_\mu(\mathbf z_0)\neq0.
$$
\end{lemma}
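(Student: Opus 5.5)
The plan is to reduce to a single block and identify $\mathcal Z_\mu$ by an invariance argument. Then the Schur and normalization statements follow from the Weyl character formula and Lemma~\ref{lem:all-cell-initial}.

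\textbf{Reduction to one block.} Since $W_\mu=\prod_b S_{\mu_b}$, the operators $\mathscr D_i$ attached to different blocks involve disjoint variables and commute. Also $\ell$ is additive on $W_\mu$, so $\mathscr D_y$ factors blockwise. Hence $\mathcal Z_\mu=\prod_b\mathcal Z_{(b)}$. Likewise $\Delta_\mu$, $\mathfrak a_\mu$ and $\mathcal P_\mu$ factor as products of commuting block operators. It therefore suffices to treat $\mu=(m)$, $W_\mu=S_m$.

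\textbf{Invariance step.} Write $Z=\sum_{y\in S_m}\mathscr D_y$. Pair $y$ with $s_iy$ and use $\mathscr D_i\mathscr D_y=\mathscr D_{s_iy}$ when $\ell(s_iy)>\ell(y)$. When $\ell(s_iy)<\ell(y)$, use instead the quadratic relation $\mathscr D_i^2=(v-1)\mathscr D_i+v$ of Lemma~\ref{lem:demazure-recurrence}. Together these give $\mathscr D_iZ=vZ$ for every $i<m$.

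A direct computation from the definition shows that $\mathscr D_iG=vG$ is equivalent to
$$(1-vz_i/z_{i+1})G=(1-vz_{i+1}/z_i)\,s_iG.$$
Put $H=G/\mathfrak a$ with $\mathfrak a=\prod_{i<j}(1-vz_j/z_i)$. The factor $(1-vz_{i+1}/z_i)$ occurs in $\mathfrak a$, and $s_i$ permutes the remaining factors of $\mathfrak a$. So the displayed relation says exactly that $H$ is $s_i$-invariant. Thus $O:=\mathfrak a^{-1}Z$ takes values in $S_m$-invariant rational functions.

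Next expand each $\mathscr D_i=c_i(\mathbf z)+d_i(\mathbf z)s_i$ with rational coefficients, so that $O=\sum_y b_y(\mathbf z)\,y$. The $y$ are linearly independent over the field of rational functions (Artin). Hence the invariance $xO=O$ for $x\in S_m$ forces $b_{xy}=x(b_y)$, i.e. $O=\sum_y y\circ b_1$. So $O$ is determined by $b_{w_m}=w_m(b_1)$, where $w_m$ is the longest element.

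Only $\mathscr D_{w_m}$ contributes to the coefficient of $w_m$. That coefficient is the ordered product of the factors $d_i=-(1-vz_{i+1}/z_i)/(z_i/z_{i+1}-1)$, transported along a reduced word, which yields one factor per inversion. The main obstacle I expect is the bookkeeping of signs and of which root each factor lands on. The target is to show that this product equals $w_m\bigl(\mathfrak a/\Delta\bigr)$, so that after dividing by $\mathfrak a$ one gets $b_1=1/\Delta$. First I would check $m=2$ by hand, which gives $d_1=(1-vz_2/z_1)\cdot w(1/(1-z_2/z_1))\cdot(\ldots)$, and then induct using $w_m=w_{m-1}\cdot(s_{m-1}\cdots s_1)$. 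This proves $\mathcal Z_\mu=\mathfrak a_\mu\mathcal P_\mu$.

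\textbf{Remaining assertions.} $\mathcal P_\mu F$ is a Laurent polynomial, because it equals $\mathfrak a_\mu^{-1}\mathcal Z_\mu F$, which is a symmetric rational function whose only possible poles lie on the hyperplanes $z_i=z_j$. Those poles cancel in the antisymmetrization: write $\mathcal P_\mu F=A(F\cdot\text{numerator})/A(\text{Vandermonde})$, with $A$ the alternating sum. For dominant $f$, use $\Delta=\prod_{i<j}(1-z_j/z_i)=\mathbf z^{-\rho}a_\rho(\mathbf z)$, where $a_\rho$ is the Vandermonde alternant. Then $\sum_y y(\mathbf z^f/\Delta)=a_{f+\rho}/a_\rho=s_f$ by the Weyl character formula. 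The case $f=0$ gives $\mathcal P_\mu1=1$, hence $\mathcal Z_\mu1=\mathfrak a_\mu$.

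Finally, by Lemma~\ref{lem:scalar-fixed-section} and Lemma~\ref{lem:all-cell-initial} at $w=1$, $f=0$,
$$\Omega_{\boldsymbol\beta}(f_\mu^\circ)=\sum_y\mathcal W_y(\mathbf z_0;1)=\mathcal Z_\mu1\,|_{\mathbf z_0}=\mathfrak a_\mu(\mathbf z_0).$$
This is nonzero because irreducibility of each $\sigma_b$ excludes the within-block ratios $z_j/z_i=q=v^{-1}$.
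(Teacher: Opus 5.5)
Your route is genuinely different from the paper's, and most of it is sound. The paper does not prove $\mathcal Z_\mu=\mathfrak a_\mu\mathcal P_\mu$ directly. It specializes \cite{cgp-demazure} to covering degree $1$ with $x^\gamma=\mathbf z^{-\gamma}$, so that $\mathscr D_i=(1-vz_{i+1}/z_i)\partial_i-1$ with $\partial_i$ the isobaric Demazure operator. It then quotes their Theorem~3 ($\partial_{w_{0,\mu}}=\mathcal P_\mu$, via the Weyl denominator) and Theorem~4 ($\sum_{y\in W_\mu}\mathscr D_y=\mathfrak a_\mu\partial_{w_{0,\mu}}$). Laurent polynomiality is then automatic, since each $\partial_i$ preserves Laurent polynomials. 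You instead derive the identity from the Hecke relations of Lemma~\ref{lem:demazure-recurrence} alone. Your steps are: $\mathscr D_iZ=vZ$; the correct observation that this eigen-equation says exactly that $ZF/\mathfrak a$ is $s_i$-invariant; and Dedekind independence of the $y$ over $\mathbb C(\mathbf z)$, which forces $\mathfrak a^{-1}Z=\sum_y y\circ b_1$. This is self-contained and avoids translating another paper's conventions. The cost is one coefficient that must be computed by hand, and that is where your proposal is incomplete.

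You do not carry out that computation, and the target you state is wrong: the $w_m$-coefficient of $\mathscr D_{w_m}$ is not $w_m(\mathfrak a/\Delta)$. Already for $m=2$, $d_1=(1-vz_2/z_1)/(1-z_1/z_2)$ has numerator $\mathfrak a$, not $w_m(\mathfrak a)=1-vz_1/z_2$; your own $m=2$ line has it right. The stated target would give $b_1=\mathfrak a/(w_m(\mathfrak a)\Delta)$ rather than $1/\Delta$. The correct target is $\mathfrak a\,w_m(\Delta)^{-1}$, and no induction is needed. Write $d_i=\phi(z^{\alpha_i})$ with $\phi(t)=(1-vt^{-1})/(1-t)$. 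For a reduced word $w_m=s_{i_1}\cdots s_{i_k}$, $k=\binom m2$, the coefficient is
\[
\prod_{j=1}^{k}(s_{i_1}\cdots s_{i_{j-1}})(d_{i_j})=\prod_{j=1}^{k}\phi(z^{\beta_j}),\qquad \beta_j=s_{i_1}\cdots s_{i_{j-1}}\alpha_{i_j},
\]
and the $\beta_j$ enumerate the positive roots of $S_m$ once each. Hence the coefficient is $\prod_{\beta>0}(1-vz^{-\beta})\big/\prod_{\beta>0}(1-z^{\beta})=\mathfrak a/w_m(\Delta)$. So $b_{w_m}=w_m(\Delta)^{-1}$ and $b_1=\Delta^{-1}$. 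With this inserted, your argument for the boxed identity is complete.

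One smaller point. Your first reason for Laurent polynomiality is misstated: $\mathfrak a_\mu^{-1}\mathcal Z_\mu F$ can a priori have poles only along $z_i=vz_j$, not $z_i=z_j$. The alternant argument you then give (an antisymmetric Laurent polynomial is divisible by the Vandermonde) is the right one and suffices on its own. Alternatively, the two expressions for $\mathcal P_\mu F$ have coprime denominators. The Schur identity, $\mathcal P_\mu1=1$, and the evaluation and nonvanishing of $\Omega_{\boldsymbol\beta}(f_\mu^\circ)$ agree with the paper.
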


\begin{proof}
In \cite[equations~(10)--(11), p.~4]{cgp-demazure}, set the
covering degree to $1$ and $x^\gamma=\mathbf z^{-\gamma}$.
The Weyl action is then ordinary, and their operators become
$$
\begin{aligned}
D_iF&=\partial_iF=\frac{F-(z_{i+1}/z_i)s_iF}{1-z_{i+1}/z_i},\\
T_i&=(1-vz_{i+1}/z_i)\partial_i-1=\mathscr D_i.
\end{aligned}
$$
Let $\Phi_\mu^+$ be the positive Levi roots and put
$I(y^{-1})=\{\alpha\in\Phi_\mu^+:y^{-1}\alpha<0\}$.
The Weyl denominator satisfies
$$
y(\Delta_\mu)=(-1)^{\ell(y)}
\mathbf z^{\sum_{\alpha\in I(y^{-1})}\alpha}\Delta_\mu.
$$
Thus their Theorem~3, p.~5, applied to the commuting type-$A$
blocks, gives
$$
\partial_{w_{0,\mu}}F
=\frac{1}{\Delta_\mu}\sum_{y\in W_\mu}(-1)^{\ell(y)}
\mathbf z^{-\sum_{\alpha\in I(y^{-1})}\alpha}yF
=\sum_{y\in W_\mu}y\left(\frac{F}{\Delta_\mu}\right)
=\mathcal P_\mu F.
$$
Their Theorem~4 on the same page now gives
$\mathcal Z_\mu=\mathfrak a_\mu\mathcal P_\mu$.
These identities hold on rational functions.

Each $\partial_i$ preserves Laurent polynomials and fixes $1$.
The Weyl character formula gives the stated Schur product.
Finally Lemmas~\ref{lem:scalar-fixed-section}
and~\ref{lem:all-cell-initial}, at $f=0,w=1$, give
$$
\Omega_{\boldsymbol\beta}(f_\mu^\circ)
=(\mathcal Z_\mu1)(\mathbf z_0)=\mathfrak a_\mu(\mathbf z_0).
$$
Irreducibility of $\sigma_b$ excludes
$\beta_i/\beta_j=\nu^{\pm1}$ for distinct $i,j\in C_b$,
so every factor of this product is nonzero.
\end{proof}

\begin{theorem}[General Bruhat Cell Formula]
\label{thm:all-bruhat-cells}
Let $\lambda$ be a partition, let $\mu=\lambda^t$, and assume
that the segments $\Delta_a$ are pairwise unlinked and that
all principal series $\sigma_b$ are irreducible.
Let $\pi_{\lambda,\chi}$ and $\mathbf z_0$ be defined by the
inducing data above. The $K_\mu$-fixed line of $\pi_{\lambda,\chi}$ has a unique
Whittaker function $W_\mu^\circ$ normalized by $W_\mu^\circ(1)=1$.

For $f\in\mathbb Z^n$ and $w\in\mathbf W$, one has
$$
\boxed{
W_\mu^\circ(d_f\dot w)=
\begin{cases}
\displaystyle
\delta_B^{1/2}(d_f)\,q^{-\ell(w)}
\left(\mathcal P_\mu\mathscr D_w^{-1}\mathbf z^f\right)
(\mathbf z_0),
&d_f\in T_w^+,\\[4pt]
0,&d_f\notin T_w^+.
\end{cases}
}
$$
Here $T_w^+$ is the parahoric support cone of Definition~2:
$$
f_i-f_{i+1}\geq
\begin{cases}
-1,&W_\mu w^{-1}\alpha_i<0,\\
0,&\text{otherwise},
\end{cases}
\qquad 1\leq i<n.
$$
Equivalently, use $\mathcal P_\mu=\mathfrak a_\mu^{-1}\mathcal Z_\mu$.
Apply the operators to the monomial in independent variables before
specializing the parameters.
\end{theorem}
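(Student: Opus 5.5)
The plan is to assemble the earlier lemmas; the only genuinely new work is reconciling the two support cones. First, uniqueness: by Proposition~\ref{prop:ferrers-fixed-line} the $K_\mu$-fixed space is a line. By Lemma~\ref{lem:opposite-quotient} the Whittaker functional $\Lambda_\pi$ is nonzero on it, so $g\mapsto\Lambda_\pi(\pi(g)v_\mu)/\Lambda_\pi(v_\mu)$ is the unique normalized Whittaker function on that line, and it equals $W_\mu^\circ$.

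Next, Lemma~\ref{lem:scalar-fixed-section} gives
\[
W_\mu^\circ(g)=\frac{\sum_{y\in W_\mu}\mathcal W_y(\mathbf z_0;g)}{\sum_{y\in W_\mu}\mathcal W_y(\mathbf z_0;1)} .
\]
Lemma~\ref{lem:block-symmetrizer} identifies the denominator as $\mathfrak a_\mu(\mathbf z_0)\neq0$. For the numerator at $g=d_f\dot w$ with $f\in\mathcal C_w$, Lemma~\ref{lem:all-cell-initial} gives, in independent variables,
\[
\sum_{y\in W_\mu}\mathcal W_y(\mathbf z;d_f\dot w)=q^{-\ell(w)}\delta_B^{1/2}(d_f)\,\mathcal Z_\mu\mathscr D_w^{-1}(\mathbf z^f).
\]
Lemma~\ref{lem:block-symmetrizer} then rewrites $\mathcal Z_\mu=\mathfrak a_\mu\mathcal P_\mu$ as an identity of Laurent polynomials. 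We specialize to $\mathbf z=\mathbf z_0$ only after this rewriting, which is legitimate because Lemma~\ref{lem:all-cell-initial} provides regular specialization. The factor $\mathfrak a_\mu(\mathbf z_0)$ cancels against the denominator, giving the boxed value on $\mathcal C_w$. For $f\notin\mathcal C_w$, every summand vanishes by Lemma~\ref{lem:all-cell-initial}.

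It remains to replace $\mathcal C_w$ by $T_w^+$; I expect this to be the main obstacle. We have $\mathcal C_w\subseteq\{f:f_i-f_{i+1}\geq-1\}$, and $T_w^+$ requires $f_i-f_{i+1}\geq0$ unless $W_\mu w^{-1}\alpha_i<0$. The vanishing outside $T_w^+$ does not come from Lemma~\ref{supportlemma} directly, since that lemma is stated for $w\in\mathbf W^\mu$ while here $w$ is arbitrary. For a general $w$, write $w=xy_0$ with $x\in\mathbf W^\mu$ and $y_0\in W_\mu$. Right $K_\mu$-invariance and $\dot y_0\in K_\mu$ give $W_\mu^\circ(d_f\dot w)=W_\mu^\circ(d_f\dot x)$. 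Since $W_\mu w^{-1}=W_\mu x^{-1}$, the cone $T_w^+$ depends only on the coset $wW_\mu$, so the support lemma applied at $x$ yields vanishing outside $T_w^+$ for every $w$.

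For the reverse comparison, we need that on $T_w^+$ the formula agrees with the cone $\mathcal C_w$ description. Two points must be checked.
\begin{enumerate}
\item If $W_\mu w^{-1}\alpha_i<0$ then $w^{-1}(i)>w^{-1}(i+1)$, so $T_w^+\subseteq\mathcal C_w$. This holds because $w^{-1}\alpha_i$ is then a negative root.
\item Points of $\mathcal C_w\setminus T_w^+$ give zero, which is consistent because there $W_\mu^\circ$ vanishes by the support argument. In these cases $w^{-1}\alpha_i\in\Phi_\mu^-$.
\end{enumerate}
So on $T_w^+$ the formula holds, and outside $T_w^+$ the value is zero. In particular the boxed expression is automatically $0$ on $\mathcal C_w\setminus T_w^+$; I would also sanity-check this by noting that $\mathcal P_\mu$ annihilates the relevant antisymmetric combinations.

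Finally, I would verify that the $\mathcal P_\mu$-form is invariant under $w\mapsto wy_0$. This amounts to the identity $\mathcal Z_\mu\mathscr D_{y_0}^{\pm1}=q^{\pm\ell(y_0)}\mathcal Z_\mu$ (up to ordering conventions), which follows from the quadratic relation $\mathscr D_i^2=(v-1)\mathscr D_i+v$ and the fact that $\mathcal Z_\mu$ absorbs $\mathscr D_i$ with eigenvalue $v$ on the appropriate side. This yields consistency with the coset reduction and completes the proof.
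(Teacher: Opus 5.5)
Your proof is correct and follows the paper's own route. You take the ratio from Lemma~\ref{lem:scalar-fixed-section}, sum Lemma~\ref{lem:all-cell-initial} into $\mathcal Z_\mu=\mathfrak a_\mu\mathcal P_\mu$ and cancel $\mathfrak a_\mu(\mathbf z_0)$, obtain vanishing outside $T_w^+$ by writing $w=xy_0$ and applying Lemma~\ref{supportlemma}, and use the inclusion $T_w^+\subseteq\mathcal C_w$. Your last paragraph is unnecessary, since Lemma~\ref{lem:all-cell-initial} already holds for every $w\in\mathbf W$; as written it also has the exponent sign reversed: the correct identity is $\mathcal Z_\mu\mathscr D_{y_0}^{\pm1}=q^{\mp\ell(y_0)}\mathcal Z_\mu$, which is what makes $q^{-\ell(w)}\mathcal Z_\mu\mathscr D_w^{-1}$ depend only on $wW_\mu$.
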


\begin{proof}
Proposition~\ref{prop:ferrers-fixed-line} gives the fixed line,
and Lemma~\ref{lem:opposite-quotient} gives its Whittaker normalization. Lemmas~\ref{lem:scalar-fixed-section}
and~\ref{lem:block-symmetrizer} then give, for every $g$,
$$
W_\mu^\circ(g)=\frac{1}{\mathfrak a_\mu(\mathbf z_0)}
\sum_{y\in W_\mu}\mathcal W_y(\mathbf z_0;g).
$$
For a minimal representative, vanishing outside $T_w^+$ is
Lemma~\ref{supportlemma}. For arbitrary $w=w_{\min}u$ with
$u\in W_\mu$, right $K_\mu$-invariance and
$$
W_\mu w^{-1}\alpha_i
=W_\mu w_{\min}^{-1}\alpha_i
$$
give $W_\mu^\circ(d_f\dot w)=W_\mu^\circ(d_f\dot w_{\min})$
and $T_w^+=T_{w_{\min}}^+$, proving the same vanishing assertion.

If $d_f\in T_w^+$ and $w^{-1}\alpha_i>0$, its orbit contains
a positive root and the parahoric lower bound is $0$.
If $w^{-1}\alpha_i<0$, that bound is $-1$ or $0$.
Thus $f\in\mathcal C_w$. Apply Lemma~\ref{lem:all-cell-initial}
to each summand and use
$\mathcal Z_\mu\mathscr D_w^{-1}\mathbf z^f
=\mathfrak a_\mu\mathcal P_\mu\mathscr D_w^{-1}\mathbf z^f$
from Lemma~\ref{lem:block-symmetrizer}. Cancelling
$\mathfrak a_\mu(\mathbf z_0)$ gives the stated formula.
The operators preserve Laurent polynomials and satisfy the
braid relations, so specialization is regular and the answer
is independent of the reduced word. The Bruhat--Iwahori
decomposition and the two equivariances recover all values on $G$.
\end{proof}

\begin{remark}[Special Cases]
For $w=1$, the theorem recovers Proposition~\ref{prop:diagonal-opposite-model}.

In the spherical case, $\mu=(n)$ and $\lambda=(1,\ldots,1)$.
The representation is an irreducible unramified principal
series, $K_\mu=K$, and $\mathbf W/W_\mu$ has only the identity
representative. The formula reduces to
$$
W_{(n)}^\circ(d_f)
=
\begin{cases}
\delta_B^{1/2}(d_f)s_f(\mathbf z_0),&f_1\geq\cdots\geq f_n,\\
0,&\text{otherwise}.
\end{cases}
$$
This is the normalized Casselman--Shalika formula.

In the Iwahori case, $\mu=(1,\ldots,1)$ and $\lambda=(n)$.
The representation is $\operatorname{St}_n(\chi)$, and
$$
(\mathbf z_0)_i=\chi(\varpi)q^{\,i-(n+1)/2},
\qquad
\frac{(\mathbf z_0)_i}{(\mathbf z_0)_{i+1}}=q^{-1}=v.
$$
The block operator $\mathcal P_\mu$ is the identity. Directly
from the definition, for every Laurent polynomial $F$,
$$
(\mathscr D_iF)(\mathbf z_0)=-F(\mathbf z_0),
\qquad
(\mathscr D_i^{-1}F)(\mathbf z_0)=-F(\mathbf z_0).
$$
In the first identity the coefficient of $F(s_i\mathbf z_0)$
is zero; the second follows from the quadratic relation.
Consequently
$$
(\mathscr D_w^{-1}\mathbf z^f)(\mathbf z_0)
=(-1)^{\ell(w)}\mathbf z_0^f,
\qquad
\mathbf z_0^f=\chi(\det d_f)\delta_B^{1/2}(d_f).
$$
The theorem therefore becomes
$$
W_{(1,\ldots,1)}^\circ(d_f\dot w)
=
\begin{cases}
\chi(\det d_f)\delta_B(d_f)(-q)^{-\ell(w)},
&d_f\in T_w^+,\\
0,&d_f\notin T_w^+.
\end{cases}
$$
This recovers the Iwahori-fixed Steinberg formula of
\cite[Theorem~3]{bar-kar} and its unramified determinant twists,
including the full $w$-dominant support.
\end{remark}

\section{The \texorpdfstring{$\GL_n\times \GL_1$}{GLn x GL1} Rankin--Selberg integral}
\label{sec:rankin-selberg-gl1}

Assume $n\geq2$, and let $W$ be a normalized right $K_S$-fixed
Whittaker eigenfunction as in Theorem~\ref{thm:diag}, where
$K_S=K_{(n_1,\ldots,n_k)}$. Write $z_1,\ldots,z_{n_1}$ for its
first-block parameters.
Let $\eta:F^\times\to\mathbb C^\times$ be unramified and normalize
$d^\times a$ by $\operatorname{vol}(\mathcal O^\times)=1$. For
$\operatorname{Re}(s)$ sufficiently large, set
$$
Z(s,W,\eta)
=
\int_{F^\times}
W\!\left(
\begin{pmatrix}
a&0\\
0&I_{n-1}
\end{pmatrix}
\right)
\eta(a)|a|^{s-\frac{n-1}{2}}\,d^\times a.
$$

\begin{proposition}[The $\GL_n\times \GL_1$ integral]
\label{prop:rankin-selberg-gl1}
With the notation above,
$$
\boxed{
Z(s,W,\eta)
=
\prod_{j=1}^{n_1}
\left(1-z_j\eta(\varpi)q^{-s}\right)^{-1}.
}
$$
\end{proposition}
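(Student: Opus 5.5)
We decompose $a=\varpi^k u$ with $u\in\mathcal O^\times$ and reduce everything to Theorem~\ref{thm:diag}. Since $\operatorname{diag}(u,I_{n-1})\in T^0\subset K_S$ and $W$ is right $K_S$-invariant, the integrand depends only on $k$. With $\operatorname{vol}(\mathcal O^\times)=1$ we obtain
$$
Z(s,W,\eta)=\sum_{k\in\mathbb Z}W(\varpi^{f_k})\,\eta(\varpi)^kq^{-k(s-\frac{n-1}{2})},
\qquad f_k=(k,0,\ldots,0).
$$

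First, we apply the support statement of Theorem~\ref{thm:diag}. For $d=\varpi^{f_k}$ to lie in $T_1^+$ we need $f_1\geq f_2=0$, which forces $k\geq0$. So only $k\geq0$ contributes, and each such $f_k$ is dominant.

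Second, we evaluate the nonvanishing terms. The modulus factor is $\delta_B^{1/2}(\varpi^{f_k})=q^{-k(n-1)/2}$, and it cancels exactly against the factor $q^{k(n-1)/2}$ in $|a|^{s-\frac{n-1}{2}}$. For the Schur factors:
\begin{itemize}
\item the first block has signature $f^{(1)}=(k,0,\ldots,0)\in\mathbb Z^{n_1}$, so $s_{f^{(1)}}(z^{(1)})=h_k(z_1,\ldots,z_{n_1})$, the complete homogeneous symmetric polynomial;
\item every later block has signature $0$, so its Schur polynomial equals $1$.
\end{itemize}
The degenerate case $n_1=1$ is consistent with this: the first block is $(k)$ and $s_{(k)}(z_1)=z_1^k$.

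Third, we sum the resulting series. The integral becomes
$$
\sum_{k\geq0}h_k(z_1,\ldots,z_{n_1})\,X^k,\qquad X=\eta(\varpi)q^{-s}.
$$
By the generating function $\sum_k h_kX^k=\prod_j(1-z_jX)^{-1}$, valid for $\operatorname{Re}(s)$ large, this equals the claimed product.

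The only point needing care is bookkeeping: checking that the normalizations in Theorem~\ref{thm:diag} give exactly $\delta_B^{1/2}$ with the stated exponent, and that the boundary conventions ($f_{n+1}=0$ and trivial central character) do not introduce an extra factor $D_\ell^{k}$. Since only the first coordinate is nonzero and lies in block $1$, the boundary factor $\prod_\ell D_\ell^{f_{m_\ell+1}}$ is trivial when $n_1\geq1$. This holds provided $n_1<n$, or, when $k=1$, by centrality. So no obstacle arises.
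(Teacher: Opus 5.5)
Your proof is correct and follows the paper's argument: unit invariance reduces the integral to the points $\operatorname{diag}(\varpi^k,I_{n-1})$, and the support condition restricts to $k\ge0$. Theorem~\ref{thm:diag} then gives $\delta_B^{1/2}h_k(z_1,\ldots,z_{n_1})$, the modulus factor cancels the shift in $|a|^{s-(n-1)/2}$, and the generating function for $h_k$ finishes the proof. Your closing paragraph on boundary factors is unnecessary, because Theorem~\ref{thm:diag} is already stated in the form $\delta_B^{1/2}\prod_\ell s_{f^{(\ell)}}(z^{(\ell)})$ with those factors absorbed; also, your exponent $k$ clashes with the paper's $k$, the number of blocks.
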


\begin{proof}
Put $a_m=\operatorname{diag}(\varpi^m,1,\ldots,1)$.
Torus-unit invariance, the support lemma, and Theorem~\ref{thm:diag} give
$$
\begin{aligned}
Z(s,W,\eta)
&=\sum_{m\geq0}W(a_m)\eta(\varpi)^m q^{-m(s-(n-1)/2)}\\
&=\sum_{m\geq0}h_m(z_1,\ldots,z_{n_1})
\bigl(\eta(\varpi)q^{-s}\bigr)^m
=\prod_{j=1}^{n_1}(1-z_j\eta(\varpi)q^{-s})^{-1},
\end{aligned}
$$
where $h_m=s_{(m,0,\ldots,0)}$ is the complete homogeneous symmetric
polynomial and the last equality is its generating identity.
\end{proof}

For $\pi_{\lambda,\chi}$, the first column consists of the upper
endpoints $\xi_{a,1}=\chi_a\nu^{(\lambda_a-1)/2}$.
Multiplicativity and the Steinberg factor
\cite[Theorem~8.2 and Proposition~9.4]{jpss} give
$$
L(s,\pi_{\lambda,\chi}\otimes\eta)
=\prod_{a=1}^rL\left(s+\frac{\lambda_a-1}{2},\chi_a\eta\right)
=\prod_{a=1}^r(1-\xi_{a,1}(\varpi)\eta(\varpi)q^{-s})^{-1}.
$$
Together with Proposition~\ref{prop:diagonal-opposite-model}, the same
calculation proves the following, also for nontrivial central character.

\begin{corollary}[Parahoric test vector]
\label{cor:parahoric-rs-test-vector}
Let $W_\mu^\circ$ be the normalized $K_\mu$-fixed Whittaker function in
$\pi_{\lambda,\chi}$ constructed above. Then for every unramified character $\eta$,
$$
\boxed{
Z(s,W_\mu^\circ,\eta)
=
L(s,\pi_{\lambda,\chi}\otimes\eta).
}
$$
\end{corollary}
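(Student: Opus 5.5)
The plan is to repeat the computation of Proposition~\ref{prop:rankin-selberg-gl1}, taking the diagonal values from Proposition~\ref{prop:diagonal-opposite-model} instead of Theorem~\ref{thm:diag}. That proposition needs no central-character hypothesis, so the argument works for arbitrary unramified central character. We then compare the result with the displayed product formula for $L(s,\pi_{\lambda,\chi}\otimes\eta)$.

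First, reduce the integral to a sum over $m\geq0$. Write $a=\varpi^m u$ with $u\in\mathcal O^\times$. The element $\operatorname{diag}(u,1,\ldots,1)$ lies in $T^0\subset K_\mu$, so right $K_\mu$-invariance removes $u$. For the support, $a_m=\operatorname{diag}(\varpi^m,1,\ldots,1)$ lies in $T_1^+$ only when $m\geq0$: the constraint at $\alpha_1$ is $\langle\alpha_1,\lambda\rangle\geq0$, because $\alpha_1\in S_\mu$ when $\mu_1\geq2$, and when $\mu_1=1$ the cone $T_1^+$ still forces nonnegativity for $w=1$. Lemma~\ref{supportlemma} then kills $m<0$. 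This gives
$$
Z(s,W_\mu^\circ,\eta)=\sum_{m\geq0}W_\mu^\circ(a_m)\,\eta(\varpi)^m q^{-m(s-(n-1)/2)}.
$$

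Next, evaluate $W_\mu^\circ(a_m)$ by Proposition~\ref{prop:diagonal-opposite-model}. The factor $\delta_B^{1/2}(a_m)=q^{-m(n-1)/2}$ cancels the shift in the exponent. In the Schur product, only the first block has a nonzero signature, namely $(m,0,\ldots,0)$, and every other block contributes $s_0=1$. So the sum becomes $\sum_m h_m(z^{(1)})(\eta(\varpi)q^{-s})^m$, and the generating identity for complete homogeneous polynomials gives $\prod_{a=1}^{\mu_1}(1-z^{(1)}_a\eta(\varpi)q^{-s})^{-1}$.

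Finally, identify this with the $L$-factor. By definition $z^{(1)}=(\xi_{1,1}(\varpi),\ldots,\xi_{\mu_1,1}(\varpi))$, and $\mu_1=\#\{a:\lambda_a\geq1\}=r$, so the product runs exactly over the upper endpoints $\xi_{a,1}$, $1\leq a\leq r$. This matches the displayed formula for $L(s,\pi_{\lambda,\chi}\otimes\eta)$ obtained from multiplicativity and the Steinberg factors.

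The only delicate step is the support check. We must confirm that no negative $m$ survives, and the case $\mu_1=1$ is the one to watch, since there $\alpha_1\notin S_\mu$. In that case $a_m$ with $m=-1$ might appear to lie in $T_1^+$. But $T_1^+=T^+T^0$ by definition, and Proposition~\ref{prop:diagonal-opposite-model} only covers $d\in T^+$, so we will rely on the definition $T_1^+=T^+T^0$ and on Lemma~\ref{supportlemma} with $w=1$. For $w=1$ every $\alpha\in\Delta$ is positive, so the lower bound is $0$ and the support is as claimed. Convergence for $\operatorname{Re}(s)$ large is then immediate.
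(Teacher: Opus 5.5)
Your proposal is correct and follows the paper's own argument. The paper likewise reruns the computation of Proposition~\ref{prop:rankin-selberg-gl1} with the diagonal values of Proposition~\ref{prop:diagonal-opposite-model}, obtains $\prod_{a=1}^{r}(1-\xi_{a,1}(\varpi)\eta(\varpi)q^{-s})^{-1}$ via $\mu_1=r$, and matches this with the upper-endpoint product for $L(s,\pi_{\lambda,\chi}\otimes\eta)$.

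Your worry about the case $\mu_1=1$ is unnecessary. For $w=1$, Lemma~\ref{supportlemma} gives the bound $0$ at every simple root, because $1\in W_\mu$ keeps each $\alpha\in\Delta$ positive. So $m<0$ is excluded whether or not $\alpha_1\in S_\mu$.
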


\section{The \texorpdfstring{$\GL_n\times \GL_m$}{GLn x GLm} Rankin--Selberg integral}
\label{sec:rankin-selberg-gln-glm}

We compute the integral for $n>m$ when the larger parahoric is obtained
by appending singleton blocks. The all-cell formula determines its lowest
coefficient; the equal-rank functional equation then determines the integral.

\begingroup
\setlength{\emergencystretch}{2em}
\allowdisplaybreaks
\subsection{Representations and statement}

Retain the field, Whittaker character, parahoric, and normalized-induction
conventions above, writing $G_d,B_d,N_d,\psi_d$ in rank $d$ and $\times$ for
normalized induction. Fix $n>m$, put $e=n-m$, and let
$\mu=(\mu_1,\ldots,\mu_k)$ be an ordered composition of $m$. Set
$$
\mu^{(n)}=(\mu,1^e),\qquad
\lambda_a=\#\{b:\mu_b\ge a\},\qquad r=\max_b\mu_b,\qquad
\lambda^{(n)}=(\lambda_1+e,\lambda_2,\ldots,\lambda_r).
$$
For unramified characters $\chi_a,\eta_a$, consider
$$
\begin{aligned}
\pi_n&=\operatorname{St}_{\lambda_1+e}(\chi_1)
            \times\operatorname{St}_{\lambda_2}(\chi_2)\times\cdots\times\operatorname{St}_{\lambda_r}(\chi_r),\\
\pi_m&=\operatorname{St}_{\lambda_1}(\eta_1)
            \times\operatorname{St}_{\lambda_2}(\eta_2)\times\cdots\times\operatorname{St}_{\lambda_r}(\eta_r),\\
\tau&=\operatorname{St}_{\lambda_1}(\chi_1\nu^{e/2})
            \times\operatorname{St}_{\lambda_2}(\chi_2)\times\cdots\times\operatorname{St}_{\lambda_r}(\chi_r).
\end{aligned}
$$
For arbitrary ordered $\mu$ assume
$$
\chi_a/\chi_c,\ \eta_a/\eta_c\notin\nu^{\frac12\mathbb Z}\quad(a\ne c),
\qquad \nu^{\frac12\mathbb Z}=\{\nu^{j/2}:j\in\mathbb Z\}.
$$
This separates different rows of the half-integrally centered segments,
including those of $\tau$, so all three segment products and their column
principal series are irreducible. No condition between $\chi_a$ and $\eta_c$
is imposed. For decreasing $\mu$, the weaker hypotheses of
Theorem~\ref{thm:all-bruhat-cells} suffice separately for all three
representations: pairwise unlinked segments and irreducible column principal series.

Let the unique fixed Whittaker functions, each of value $1$ at the identity, be
$$
W_n\in\mathcal W(\pi_n,\psi_n)^{K_{\mu^{(n)}}},\qquad
W_m\in\mathcal W(\pi_m,\psi_m^{-1})^{K_\mu},\qquad
W_\tau\in\mathcal W(\tau,\psi_m)^{K_\mu}.
$$
For decreasing $\mu$, Lemma~\ref{lem:opposite-quotient} and
Theorem~\ref{thm:all-bruhat-cells} apply directly. For ordered $\mu$, number
each row's boxes by their occurrences from left to right. The reversed
column tuple has each row in Steinberg-quotient order; regrouping by rows
exchanges only characters from different rows. The stronger hypothesis
excludes ratios $\nu^{\pm1}$, so Lemma~\ref{lem:rank-one-preservation}
provides regular Whittaker-preserving intertwiners.
Proposition~\ref{prop:ferrers-fixed-line} already covers ordered compositions,
and the remaining symmetrizer and support arguments apply block by block.
Here we impose no restriction on the central characters, as allowed in
Section~\ref{sec:opposite-induced-model}.

Use the block endpoints $m_b$ and intervals $C_b$ of this composition,
and put $\kappa_i=m-m_b$ for $i\in C_b$. Set
$$
\mathcal L_\mu=\sum_{b<c}\mu_b\mu_c=\sum_i\kappa_i,
\qquad \ell_* =\sum_{b<c}\sum_{a=1}^{\mu_c}\min(a,\mu_b).
$$
For decreasing $\mu$, $\ell_*=\sum_b(b-1)\binom{\mu_b+1}{2}$.
Put $\gamma_a=\chi_a(\varpi)$, $\beta_a=\eta_a(\varpi)$ and
$r_a(b)=\#\{c\le b:\mu_c\ge a\}$. The block parameters are
$$
z_a^{(b)}=\gamma_a q^{r_a(b)-(\lambda_a^{(n)}+1)/2},\qquad
y_a^{(b)}=\beta_a q^{r_a(b)-(\lambda_a+1)/2},\qquad a\le\mu_b.
$$
Their concatenations are $\mathbf z=(z_1,\ldots,z_m)$ and
$\mathbf y=(y_1,\ldots,y_m)$. Thus $\mathbf z$ is both the first $m$
parameters of $\pi_n$ and the complete column tuple of $\tau$.
Write $\det z^{(b)}$ for the product of the entries of block $b$.
With $\operatorname{vol}(K_\mu)=\operatorname{vol}(N_m(\mathcal O))=1$
and the resulting quotient measure, define
$$
Z(s)=\int_{N_m\backslash G_m}
W_n(\operatorname{diag}(g,I_e))W_m(g)|\det g|^{s-e/2}\,dg.
$$

\begin{theorem}\label{thm:rs-gln-glm}
Put
$$
C_\mu(\mathbf z,\mathbf y)
=q^{-\ell_*}\prod_{b=1}^k
\left(\det z^{(b)}\det y^{(b)}\right)^{-(m-m_b)}.
$$
Then
$$
\boxed{Z(s)=C_\mu(\mathbf z,\mathbf y)q^{s\mathcal L_\mu}
L(s,\pi_n\times\pi_m).}
$$
The Euler factor is
$$
L(s,\pi_n\times\pi_m)=
\prod_{a,c=1}^r\prod_{j=1}^{\min(\lambda_a^{(n)},\lambda_c)}
\left(1-\gamma_a\beta_c
q^{-s-(\lambda_a^{(n)}+\lambda_c)/2+j}\right)^{-1}.
$$
The identity holds initially for $\operatorname{Re}(s)$ sufficiently large,
and then as an identity of rational functions in $q^{-s}$.
\end{theorem}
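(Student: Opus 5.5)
We follow the outline announced at the start of this section: reduce to an equal-rank integral, extract its lowest Laurent coefficient from the all-cell formula, and then let the ideal theorem and functional equation fix the rest.

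\emph{Step 1: reduction to equal rank.} Since $K_{\mu^{(n)}}$ is obtained by appending $e$ singleton blocks, we use Iwasawa coordinates $g=u\,d_f\dot w\,k$ with $k\in K_\mu$. The values $W_n(\operatorname{diag}(d_f\dot w,I_e))$ then lie on the Bruhat cells of $G_n$ indexed by $\operatorname{diag}(w,1)$. In Theorem~\ref{thm:all-bruhat-cells} for $\pi_n$, the last $e$ coordinates carry $f_i=0$ and belong to singleton blocks, so the symmetrizer $\mathcal P_{\mu^{(n)}}$ acts only on the first $m$ variables. The operators $\mathscr D_w$ for $w\in S_m$ likewise involve only $z_1,\ldots,z_m$. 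Comparing the closed formulas, and using that $\mathbf z$ is the complete column tuple of $\tau$, we expect
$$
W_n(\operatorname{diag}(g,I_e))\,|\det g|^{-e/2}=W_\tau(g)
$$
on all of $G_m$. The modulus factors match because $\delta_{B_n}^{1/2}(\operatorname{diag}(d_f,I_e))=\delta_{B_m}^{1/2}(d_f)|\det d_f|^{e/2}$. The supports also match: the cones $T_w^+$ agree, since the added singleton blocks impose only $f_m\ge -1$ or $f_m\ge0$, and both of these are already implied on the $G_m$ side. Hence $Z(s)=\Psi(s,W_\tau,W_m)$, the equal-rank $\GL_m\times\GL_m$ integral without Schwartz function. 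For an Iwahori-spherical pair, this integral lies in $L(s,\tau\times\pi_m)\,\mathbb C[q^{\pm s}]$. We also check that $L(s,\tau\times\pi_m)=L(s,\pi_n\times\pi_m)$: only the $\lambda_1$ versus $\lambda_1+e$ segments differ, and the $\min$ in the displayed product accounts for this, using the upper-endpoint convention.

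\emph{Step 2: lowest coefficient.} We write the integral as a sum over $K_\mu$-cosets: $N_m\backslash G_m=\bigsqcup_{f,x}N_m d_f\dot x K_\mu$ with $x\in\mathbf W^\mu$, where each piece has volume $\delta_B^{-1}(d_f)\cdot\operatorname{vol}$. The powers of $q^{-s}$ are $q^{-s|f|}$. The support cones allow $f_i-f_{i+1}=-1$ across block boundaries, so $|f|$ is bounded below. We expect the minimum to be $-\mathcal L_\mu$, attained at a unique pair $(f,x)$: $x$ is the longest element of $\mathbf W^\mu$ reversing block order, and $f_i=-\kappa_i$. At this point Theorem~\ref{thm:all-bruhat-cells} for both $W_\tau$ and $W_m$ (the latter with $\psi^{-1}$, which does not affect the formula) gives monomials. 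Here $\mathscr D_x^{-1}$ applied to $\mathbf z^{f}$ is a single block-reversed monomial evaluated after symmetrization, and it produces $\prod_b(\det z^{(b)})^{-(m-m_b)}$. The powers of $q$, namely $q^{-2\ell(x)}$ together with the coset volume $|K_\mu d_f x K_\mu/K_\mu|$ and the modulus factors, should combine to $q^{-\ell_*}$. The quantity $\ell_*$ counts the relevant inversions weighted by $\min(a,\mu_b)$, which is where the Ferrers shape enters. This step is the main obstacle. We must prove both the uniqueness of the extremal term, showing that every other cell gives a strictly larger $|f|$ or cancels, and the exact power $q^{-\ell_*}$. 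We would first attempt a direct inversion count for $x$ restricted to the cone; failing that, we would use a triangularity of $\mathscr D_x^{-1}$ with respect to dominance order.

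\emph{Step 3: the functional equation forces a monomial.} By Step 2, $Z(s)=q^{s\mathcal L_\mu}L(s)\,P(q^{-s})$, where $P$ is a polynomial in $q^{-s}$ with constant term $C_\mu$. Apply the $\GL_m\times\GL_m$ functional equation of \cite{jpss} to the translated pair, with the Schwartz function taken as the characteristic function of a suitable lattice. The conductor of $\tau\times\pi_m$ is explicit: $\varepsilon(s)=\varepsilon_0 q^{-c s}$ with $c$ computed from the Steinberg segments. Run the same lowest-coefficient argument for the contragredient pair under $g\mapsto{}^tg^{-1}w_0$. This bounds the degree of $P$ from above by matching the top degree with the lowest degree of the dual side. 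Together with $\deg P\ge0$, the degrees force $P$ to be the constant $C_\mu$. Continuation in $q^{-s}$ then gives the identity as rational functions.
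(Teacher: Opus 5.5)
Your overall architecture is the paper's: reduce to equal rank, extract the lowest Laurent coefficient, then let the ideal theorem, the conductor and the functional equation force a monomial. However, Step~1 contains a false claim that breaks the later steps.

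\textbf{Step 1.} The appended singleton blocks impose a condition that is not implied on $G_m$. For $w\in S_m\subset S_n$ the root $w^{-1}\alpha_m=e_{w^{-1}(m)}-e_{m+1}$ is positive, so the support cone of $K_{\mu^{(n)}}$ requires exactly $f_m\ge0$. Nothing bounds $f_m$ on the $G_m$ side: at $g=\varpi^{-1}I_m$ one has $W_\tau(g)\neq0$ but $W_n(\operatorname{diag}(g,I_e))=0$. The correct restriction is $W_n(\operatorname{diag}(g,I_e))=|\det g|^{e/2}W_\tau(g)\varphi(e_mg)$ with $\varphi=\mathbf 1_{\mathcal O^m}$, because $e_m u d_f\dot w k$ is $\varpi^{f_m}$ times a primitive integral row.

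Without $\varphi$ the equal-rank integral diverges for every $s$: along $f\mapsto f+c\mathbf 1$ it is a two-sided geometric series. So neither the ideal theorem nor the functional equation applies to it. The indicator is also exactly what your later steps need:
\begin{itemize}
\item The bound $|f|\ge-\mathcal L_\mu$ comes from $f_m\ge0$. With $h=f+\rho$, the support conditions make $h$ a weakly decreasing sequence of nonnegative integers whose block labels strictly decrease along runs of equal height. Hence block $b$ contributes at least $\binom{\mu_b}2$.
\item The functional equation must be applied to $Z(s)=\Psi(s,W_\tau,W_m,\mathbf 1_{\mathcal O^m})$ itself. There $\widehat\varphi=\varphi$ makes the dual integral of the same shape, so the support lemma for $K_{\mu^{\rm rev}}$ bounds its lowest degree by $-\mathcal L_\mu$. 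A ``suitable lattice'' for a ``translated pair'' does not connect to $Z(s)$.
\item You also need the exact conductor $a(\tau\times\pi_m)=m^2-\sum_b\mu_b^2=2\mathcal L_\mu$ for the two degree bounds to meet.
\end{itemize}

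\textbf{Step 2.} You leave this step open, and it also misidentifies the extremal term. Your pair is in general outside the support. For $\mu=(2,1)$, the longest element $x$ of $\mathbf W^\mu$ has $x^{-1}=(3,1,2)$ in one-line notation, so $\varepsilon_2(x)=0$ and $f=-\kappa=(-1,-1,0)$ violates $f_2-f_3\ge0$. Moreover $\ell(x)=\mathcal L_\mu$ exceeds $\ell_*$ whenever a non-final block has size at least $2$.

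The correct extremal point is $f_*=h_*-\rho$. Here $h_*$ is the decreasing rearrangement of $\rho_\mu$ with equal heights ordered by decreasing block label, and $w_*$ assigns successive occurrences of block $b$ to successive indices of $C_b$. Uniqueness is purely combinatorial, with no cancellation needed, and counting inversions gives $\ell(w_*)=\ell_*$.

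The monomial value $\mathcal P_\mu\mathscr R_{w_*}z^{f_*}=(-v)^{\ell_*}z^{-\kappa}$, where $\mathscr R_w=v^{\ell(w)}\mathscr D_w^{-1}$ absorbs the factor $q^{-\ell(w)}$, is then the dominance-triangularity argument you anticipate:
\begin{itemize}
\item conjugate by $z^\rho$;
\item factor $w_*^{-1}=d_0w_{0,H}$ through the stabilizer $H$ of $h_*$;
\item follow the leading monomial, each exchange contributing $-v$;
\item note that $\operatorname{Alt}_\mu$ kills every other term of degree $\deg\mathcal V_\mu$.
\end{itemize}
Combined with the cell measure $q^{\ell_*}$, this gives $C_\mu$.
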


\subsection{Summing over Bruhat cells}

Use $\mathbf W^\mu$ for the minimal representatives in rank $m$, and let
$b(j)$ denote the block containing $j$. Write $\varepsilon_i(w)=1$ if
$b(w^{-1}(i))>b(w^{-1}(i+1))$, and $0$ otherwise. These are precisely the
indicators in the support condition of Theorem~\ref{thm:all-bruhat-cells}.
Put $X=q^{-s}$, $\varphi=\mathbf1_{\mathcal O^m}$, and $e_m=(0,\ldots,0,1)$.
The same theorem gives
\begin{equation}\label{eq:rs-restriction}
W_n(\operatorname{diag}(g,I_e))
=|\det g|^{e/2}W_\tau(g)\varphi(e_mg).
\end{equation}
Indeed, by Lemma~\ref{decomp} and equivariance it suffices to take
$g=d_fw$, $w\in \mathbf W^\mu$. The symmetrizer and Demazure operators act
only on the first $m$ variables, where the parameters agree. The support
conditions also agree for $i<m$; the root
$e_{w^{-1}(m)}-e_{m+1}>0$ adds exactly $f_m\ge0$.
The remaining appended exponents are zero, and
$$
\delta_{B_n}(\operatorname{diag}(d_f,I_e))
=\delta_{B_m}(d_f)|\det d_f|^e.
$$
For general $g=u d_fw k$, $e_mu=e_m$ and $e_mwk$ is a primitive integral
row, so $\varphi(e_mg)=\mathbf1_{\{f_m\ge0\}}$. This proves
\eqref{eq:rs-restriction} and identifies $Z(s)$ with the equal-rank integral
$$
Z(s)=\int_{N_m\backslash G_m}W_\tau(g)W_m(g)\varphi(e_mg)|\det g|^s\,dg.
$$

The finite Bruhat decomposition, with the inversion-root representatives
of Lemma~\ref{cosrep}, gives $q^{\ell(w)}$ right $K_\mu$-cosets $uwK_\mu$
for each minimal $w$, with $u\in N_m(\mathcal O)$. Each has volume $1$,
and the two Whittaker characters cancel in the product. Iwasawa integration, with torus units
of volume $1$, therefore gives
\begin{equation}\label{eq:rs-cell-sum}
Z(s)=\sum_{w\in \mathbf W^\mu}q^{\ell(w)}
\sum_{\substack{f_m\ge0\\f_i-f_{i+1}\ge-\varepsilon_i(w)}}
\delta_{B_m}^{-1}(d_f)W_\tau(d_fw)W_m(d_fw)X^{|f|},
\quad |f|=\sum_i f_i.
\end{equation}
The two modulus factors in the all-cell formula cancel
$\delta_{B_m}^{-1}(d_f)$.

\subsection{The lowest Laurent coefficient}

Set
$$
\rho=(m-1,\ldots,0),\qquad
\rho_\mu=(\mu_1-1,\ldots,0;\ldots;\mu_k-1,\ldots,0).
$$
Thus $\kappa=\rho-\rho_\mu$ is constant on each block.

The support conditions determine a unique lowest-degree contribution.

\begin{lemma}\label{lem:rs-minimum}
Every pair $(f,w)$ permitted by the support conditions in
\eqref{eq:rs-cell-sum} satisfies $|f|\ge-\mathcal L_\mu$.
There is exactly one such pair $(f_*,w_*)$ with equality, and
$\ell(w_*)=\ell_*$.
\end{lemma}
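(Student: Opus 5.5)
The plan is to turn the lemma into a purely combinatorial statement about words, and then handle that statement by induction on the number of blocks.

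\emph{Step 1: reduce to the support inequalities.} Fix $w\in\mathbf W^\mu$ and write
$$
f_i=f_m+\sum_{j=i}^{m-1}(f_j-f_{j+1}).
$$
Then
$$
|f|=mf_m+\sum_{i=1}^{m-1}i\,(f_i-f_{i+1}).
$$
The support conditions in \eqref{eq:rs-cell-sum} are $f_m\ge0$ and $f_i-f_{i+1}\ge-\varepsilon_i(w)$. They give
$$
|f|\ge-\sum_{i}i\,\varepsilon_i(w),
$$
and equality holds exactly when $f_m=0$ and $f_i-f_{i+1}=-\varepsilon_i(w)$ for every $i$. So for each $w$ there is a unique $f$ attaining the bound. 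It therefore suffices to show that $M(w):=\sum_i i\,\varepsilon_i(w)$ satisfies $M(w)\le\mathcal L_\mu$, with equality for exactly one $w_*$, and that $\ell(w_*)=\ell_*$.

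\emph{Step 2: translate to words.} To $w\in\mathbf W^\mu$ attach the word $u=u_1\cdots u_m$ with $u_i=b(w^{-1}(i))$. Minimality of $w$ makes $w^{-1}$ increasing on each block $C_b$. Hence $w\mapsto u$ is a bijection from $\mathbf W^\mu$ onto the words containing $\mu_b$ copies of the letter $b$.

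Under this bijection, $\varepsilon_i(w)=1$ exactly when $u_i>u_{i+1}$, so $M(w)$ is the major index $\operatorname{maj}(u)$. Likewise $\ell(w)$ is the inversion number $\operatorname{inv}(u)$, the number of pairs $i<j$ with $u_i>u_j$.

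The target bound is $\mathcal L_\mu=\sum_{b<c}\mu_b\mu_c$, which is also the largest possible value of $\operatorname{inv}$. As a check: for $\mu=(2,1)$ the words $112,121,211$ have $\operatorname{maj}=0,2,1$, so the unique maximizer is $121$, and its inversion number is $1=\ell_*$.

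\emph{Step 3: induction on the number of blocks $k$.} Delete all $\mu_k$ copies of the largest letter $k$ from $u$. This leaves a word $u'$ of length $m'=m-\mu_k$ with content $(\mu_1,\ldots,\mu_{k-1})$. Since
$$
\mathcal L_\mu=\mathcal L_{\mu'}+\mu_k m',
$$
the induction step reduces to one claim: inserting $\mu_k$ copies of a new largest letter into a word of length $m'$ raises $\operatorname{maj}$ by at most $\mu_k m'$, and there is exactly one insertion pattern achieving this.

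This is the step I expect to be the main obstacle. The plan is to insert the copies one at a time and track the change in $\operatorname{maj}$ for each single insertion, in the spirit of Foata's insertion argument. An inserted maximal letter creates a descent at its own position unless it is last or is followed by another copy of itself. It also shifts every later descent one position to the right.

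The difficulty is that a crude count gives a bound of $m'+(j-1)$ for the $j$-th copy, which is too weak. The fix should use the fact that equal letters never form a descent. The aim is to show that the $j$-th copy contributes at most $m'$ beyond what is already forced, and that equality forces the copies to occupy a rigid pattern of positions determined by $u'$. Once the pattern is pinned down, uniqueness of the extremal word $u_*$ follows from the inductive uniqueness of the extremal $u'$.

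\emph{Step 4: compute $\ell(w_*)$.} Identify the extremal word $u_*$ explicitly from the recursion of Step 3. Then count $\operatorname{inv}(u_*)$ one pair of letters at a time. For each pair $b<c$, the goal is to show that the copies of $c$ sitting before copies of $b$ contribute $\sum_{a=1}^{\mu_c}\min(a,\mu_b)$ inversions. Summing over $b<c$ then gives $\ell(w_*)=\ell_*$. This is consistent with the Ferrers-matrix description of Proposition~\ref{prop:ferrers-fixed-line}.
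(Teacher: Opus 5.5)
Your Steps 1 and 2 are correct. For fixed $w$, the least value of $|f|$ is $-\operatorname{maj}(u)$, attained only at $f_m=0$ and $f_i-f_{i+1}=-\varepsilon_i(w)$, and $\ell(w)=\operatorname{inv}(u)$. There is one small slip: minimality in $wW_\mu$ makes $w$, not $w^{-1}$, increasing on each $C_b$. The bijection with words is unaffected.

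\textbf{The gap.} What comes after Step 2 is the actual content of the lemma, and it is not proved. In Step 3 you note yourself that inserting the copies of $k$ one at a time only gives $\mu_k m'+\binom{\mu_k}{2}$. Your proposed repair (the $j$-th copy contributes at most $m'$ beyond what is forced, with equality only in a rigid pattern) is stated as an aim and never carried out. Nothing in the proposal establishes either the bound $\mu_k m'$ or the uniqueness of the extremal insertion.

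The claim is in fact true. One way to see it is the Garsia--Gessel shuffle theorem: after standardization, the shuffles of $u'$ with $\mu_k$ copies of a larger letter have major-index generating function $x^{\operatorname{maj}(u')}\binom{m'+\mu_k}{\mu_k}_x$. This Gaussian binomial has degree $\mu_k m'$ and leading coefficient $1$. But you would have to prove or cite such a result. Step 4 inherits the gap: it needs an explicit extremal word $u_*$, which the recursion was supposed to supply and never does. So $\ell(w_*)=\ell_*$ is also not established.

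\textbf{The missing idea.} The step that makes the induction unnecessary is a shift by $\rho=(m-1,\ldots,0)$, which is the paper's proof. Put $h=f+\rho$. The support conditions then say exactly two things:
\begin{itemize}
\item $h_1\ge\cdots\ge h_m\ge0$;
\item $h_i=h_{i+1}$ is possible only when $u_i>u_{i+1}$.
\end{itemize}
So every run of equal heights carries strictly decreasing labels. Hence the $\mu_b$ occurrences of the letter $b$ get distinct nonnegative heights, and $\sum_i h_i\ge\sum_b\binom{\mu_b}{2}$. Since $|f|=\sum_i h_i-\binom m2$, this is exactly $|f|\ge-\mathcal L_\mu$. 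In your language, at the minimal $f$ this reads $\binom m2-\operatorname{maj}(u)=\sum_i h_i$, where $h_i$ counts the weak ascents at positions $\ge i$.

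Equality forces the letter $b$ to occupy heights $0,\ldots,\mu_b-1$. The word is then forced: order by decreasing height, breaking ties by decreasing label. This gives existence and uniqueness of $(f_*,w_*)$ and the explicit extremal word at once. The length is then immediate. For $b<c$, the copy of $c$ at height $t$ precedes exactly the $\min(t+1,\mu_b)$ copies of $b$ at heights $\le t$. Summing over $0\le t<\mu_c$ and over $b<c$ gives $\ell_*$.
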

\begin{proof}
Put $h=f+\rho$. The support conditions give
$h_1\ge\cdots\ge h_m\ge0$; along equal heights the labels
$b(w^{-1}(i))$ must be strictly decreasing. Hence the $\mu_b$ heights
assigned to block $b$ are distinct nonnegative integers, and
$$
|f|=\sum_i h_i-\binom m2
\ge\sum_b\binom{\mu_b}{2}-\binom m2=-\mathcal L_\mu.
$$
Equality forces block $b$ to have heights $0,\ldots,\mu_b-1$.
Thus $h_*$ is the decreasing rearrangement of $\rho_\mu$, with equal
heights ordered by decreasing block label. Assigning successive
occurrences of block $b$ to successive indices of $C_b$ gives the unique
minimal $w_*$, and $f_*=h_*-\rho$ satisfies the support conditions.

For $b<c$, the entry of block $c$ at height $t$ precedes exactly
$\min(t+1,\mu_b)$ entries of block $b$. Summing over $0\le t<\mu_c$
and $b<c$ counts all inversions; entries in the same block create none.
This gives $\ell(w_*)=\ell_*$.
\end{proof}

Retain $v=q^{-1}$ and $\mathscr D_i,\mathcal P_\mu$ from Section~\ref{sec:all-bruhat-cells}, and set
$$
R_i=v\mathscr D_i^{-1}=\mathscr D_i+1-v,\qquad
\mathscr R_w=v^{\ell(w)}\mathscr D_w^{-1}
=R_{i_l}\cdots R_{i_1}
\quad(w=s_{i_1}\cdots s_{i_l}\text{ reduced}).
$$
The order is reversed because of the inverse. The block symmetrizer of
Lemma~\ref{lem:block-symmetrizer} can equivalently be written
$$
\mathcal P_\mu F=
\frac{\operatorname{Alt}_\mu(z^{\rho_\mu}F)}{\mathcal V_\mu},\qquad
\operatorname{Alt}_\mu=\sum_{u\in W_\mu}(-1)^{\ell(u)}u,\qquad
\mathcal V_\mu=\prod_b\prod_{\substack{i<j\\i,j\in C_b}}(z_i-z_j).
$$

The next identity evaluates the Whittaker functions at this extremal point.

\begin{lemma}\label{lem:rs-extremal}
With independent variables $z_1,\ldots,z_m$,
$$
\boxed{\mathcal P_\mu\mathscr R_{w_*}z^{f_*}
=(-v)^{\ell_*}z^{-\kappa}.}
$$
\end{lemma}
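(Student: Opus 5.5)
The plan is to compute the action of the operators $R_i$ on monomials explicitly. I would then follow a reduced word of $w_*$ step by step. The aim is to show that, modulo the kernel of $\mathcal P_\mu$, each step either multiplies the current monomial by $-v$ or transports it by $s_i$ with coefficient $-v$. Afterwards I would evaluate $\mathcal P_\mu$ on the resulting monomial using the alternant form $\mathcal P_\mu F=\operatorname{Alt}_\mu(z^{\rho_\mu}F)/\mathcal V_\mu$.

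\emph{Step 1 (rank-one formula).} From the definition of $\mathscr D_i$ and $R_i=\mathscr D_i+1-v$, write $x=z_i/z_{i+1}$. Then
$$
R_iF=\frac{(1-v)xF-(1-vx^{-1})\,s_iF}{x-1}.
$$
For $F=z^a$ with $d=a_i-a_{i+1}$ this gives a finite geometric sum, i.e.\ an explicit Laurent polynomial. Two cases matter. If $d=-1$, a direct check gives $R_iz^a=-v\,z^a$, so such monomials are eigenvectors. If $d\le-2$ (or $d\ge 1$), the result is $-v\,s_i(z^a)$ up to the factor conventions, plus a sum of intermediate monomials $z^{a+k(e_i-e_{i+1})}$ whose $(i,i+1)$ exponents lie strictly between $a$ and $s_ia$. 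I would record these expansions in the Hall--Littlewood style, since $R_i$ is a Demazure--Lusztig operator.

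\emph{Step 2 (following the extremal word).} Write $z^{f_*}=z^{h_*-\rho}$, with $h_*$ the decreasing rearrangement of $\rho_\mu$ from Lemma~\ref{lem:rs-minimum}. Choose a reduced word for $w_*$ that unsorts $h_*$ one adjacent transposition at a time, exchanging entries from different blocks $b<c$ as counted in that lemma. There are two kinds of steps. At positions of equal height, $f_i-f_{i+1}=-1$, so Step~1 gives the scalar $-v$. At positions of different heights, the leading term moves the exponent by $s_i$ with coefficient $-v$, and the remaining terms are intermediate monomials.

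The key claim is that every intermediate monomial, once carried through the rest of the word, contributes a monomial $z^{c}$ for which $c+\rho_\mu$ has a repeated entry inside some block $C_b$. Such a monomial is killed by $\operatorname{Alt}_\mu$. I would justify this by induction along the word, using the minimality in Lemma~\ref{lem:rs-minimum}. An intermediate monomial corresponds to a pair $(f,w)$ with $|f|=-\mathcal L_\mu$ that violates the distinct-heights condition; this forces two equal heights within one block. Since the word has length $\ell(w_*)=\ell_*$, the surviving term is $(-v)^{\ell_*}z^{c_*}$. Here $c_*$ is obtained by transporting $h_*-\rho$ to positions indexed by the blocks $C_b$, which gives $c_*+\rho_\mu$ equal to $\rho_\mu-\kappa+\rho_\mu$ up to a $W_\mu$-permutation.

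\emph{Step 3 (symmetrizing).} Since $\kappa=\rho-\rho_\mu$ is constant on each block, $z^{-\kappa}$ is $W_\mu$-invariant. Hence $\mathcal P_\mu z^{-\kappa}=z^{-\kappa}\mathcal P_\mu1=z^{-\kappa}$ by Lemma~\ref{lem:block-symmetrizer}, which is the Schur shift identity with the empty signature in each block. If the surviving monomial is a $W_\mu$-permutation of $z^{-\kappa}$, the alternant sign cancels against the reordering, and the stated identity follows.

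The main obstacle is Step~2: controlling the intermediate monomials and proving that they all die under $\operatorname{Alt}_\mu$. I would first test the claim in the case $\mu=(1,1)$ and in the case $\mu=(2,1)$. If the inductive bookkeeping becomes unwieldy, I would instead use the triangularity of $\mathscr R_w$ with respect to the Bruhat order on monomials. Combined with the uniqueness part of Lemma~\ref{lem:rs-minimum}, this would show that only the extremal monomial survives after symmetrization.
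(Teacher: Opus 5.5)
\textbf{There is a genuine gap, and it starts in Step 1.} The skeleton (track one leading term along a reduced word, then kill everything else with $\operatorname{Alt}_\mu$) matches the paper's, but your rank-one expansion is wrong. Conjugate by $z^\rho$ as the paper does: put $\mathscr B_i=z^\rho R_iz^{-\rho}$ and $h=a+\rho$. For $h_i>h_{i+1}$ one gets
\[
\mathscr B_iz^h=(1-v)z^h+(1-v)\,(\text{monomials strictly between } h \text{ and } s_ih)-v\,z^{s_ih}.
\]
For $h_i<h_{i+1}$, the swapped endpoint has coefficient $-1$ and $z^h$ does not occur. In your coordinates this has three consequences:
\begin{itemize}
\item The $-v$ sits on $z^{s_i\cdot a}$ with $s_i\cdot a=s_i(a+\rho)-\rho$, not on $s_i(z^a)$.
\item The input $z^a$ survives with coefficient $1-v$ whenever $d\ge0$, and the case $d=0$ is missing from your list.
\item For $d\le-2$ the coefficient is $-1$, not $-v$; for instance, $d=-2$ gives $R_iz^a=-z^{a+e_i-e_{i+1}}$.
\end{itemize}

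\textbf{The real problem is the omitted $(1-v)z^a$ terms.} Every branch that skips a swap produces, after the $\rho$-shift, a monomial $z^{xh_*}$, where $x$ is a proper-subword product.
\begin{itemize}
\item These monomials have the same decreasing rearrangement as the leading term, so no dominance or degree count removes them.
\item Lemma~\ref{lem:rs-minimum} does not help either. It concerns support pairs $(f,w)$, not monomials in an operator expansion, so your correspondence ``intermediate monomial $\leftrightarrow$ pair $(f,w)$'' has no content.
\end{itemize}

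\textbf{The missing idea} is the factorization $w_*^{-1}=d_0w_{0,H}$. Here $H$ is the stabilizer of $h_*$, and $d_0$ is the minimal element of $W_\mu d_0H$, with $d_0h_*=\rho_\mu$.
\begin{itemize}
\item Proper-subword products have length $<\ell(d_0)$, so they lie outside $W_\mu d_0H$. Hence $z^{xh_*}$ is not a block permutation of $\rho_\mu$, and $\operatorname{Alt}_\mu$ kills it.
\item The same minimality (suffixes of $d_0$ are minimal modulo $H$) shows that each swap on the leading path exchanges strictly decreasing heights. This gives the factor $-v$ rather than $-1$; you assert this without proof.
\item For the strictly intermediate terms, the valid argument is a degree count. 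All shifted exponents are nonnegative with total degree $\sum_b\binom{\mu_b}{2}=\deg\mathcal V_\mu$, so only block permutations of $\rho_\mu$ survive $\operatorname{Alt}_\mu$, and strictly dominated exponents are not of that form.
\end{itemize}

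\textbf{The endpoint must be pinned down exactly.} The leading monomial is $z^{-\kappa}$, so $c_*+\rho_\mu=\rho_\mu-\kappa$, not $2\rho_\mu-\kappa$. With your value, $\mathcal P_\mu$ would produce block Schur factors, contradicting your own Step~3. The phrase ``up to a $W_\mu$-permutation'' is also not harmless. A leading term $z^{u\rho_\mu}$ in the alternant picture with $u\in W_\mu$ nontrivial would contribute the sign $(-1)^{\ell(u)}$, and that sign does not cancel.

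Your fallback via Bruhat triangularity points in the right direction. However, the input it needs is the minimality of $d_0$ in $W_\mu d_0H$, not the uniqueness part of Lemma~\ref{lem:rs-minimum}.
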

\begin{proof}
Conjugating by $z^\rho$ gives the degree-preserving polynomial operators
$$
\mathscr B_i=z^\rho R_i z^{-\rho},\qquad
\mathscr B_iF=
\frac{(1-v)z_iF-(z_i-vz_{i+1})s_iF}{z_i-z_{i+1}}.
$$
Write $\mathscr B_u$ in the usual reduced-word order. Expanding the
difference quotient shows that all monomial exponents have decreasing
rearrangement dominated by that of the input. Apart from the endpoints
$a,s_i a$, they are strictly dominated: two unequal coordinates are
replaced by intermediate ones with the same sum. Moreover, for
nonnegative $a$,
$$
\mathscr B_i z^a\equiv-vz^{s_i a}+(1-v)z^a
\quad(a_i>a_{i+1}),\qquad
\mathscr B_i z^a=-vz^a\quad(a_i=a_{i+1}),
$$
where the congruence is modulo monomials with strictly smaller decreasing
rearrangement. In particular the span of monomials strictly dominated
by $h_*$ is stable under every $\mathscr B_i$.

Let $H$ stabilize $h_*$, and let $d_0$ be the shortest permutation with
$d_0h_*=\rho_\mu$. It assigns equal-height positions to their targets
in increasing order. Thus $d_0$ is increasing on each $H$ block and
$d_0^{-1}$ is increasing within each $C_b$, so $d_0$ is minimal in
$W_\mu d_0H$. The reversed equal-height order defining $w_*$ gives
$$
w_*^{-1}=d_0w_{0,H},\qquad
\ell_* =\ell(d_0)+\ell(w_{0,H}),\qquad
\mathscr B_{w_{0,H}}z^{h_*}=(-v)^{\ell(w_{0,H})}z^{h_*}.
$$
Along a reduced word for $d_0$, each leading swap exchanges strictly
decreasing heights: successive suffixes $u,s_i u$ are minimal modulo
$H$, with $u^{-1}(i)<u^{-1}(i+1)$; equal source heights would give the
same $H$-coset and contradict minimality. The exchanged endpoint
therefore always has coefficient $-v$, giving
$$
\mathscr B_{d_0}z^{h_*}
=(-v)^{\ell(d_0)}z^{\rho_\mu}+\sum_x c_xz^{xh_*}+Q.
$$
Every other endpoint path omits a swap, so each $x$ has a representative
shorter than $d_0$; every monomial of $Q$ is strictly dominated by $h_*$.

All these terms have degree $\sum_b\binom{\mu_b}{2}=\deg\mathcal V_\mu$.
A nonnegative monomial of this degree survives $\operatorname{Alt}_\mu$
only if each block has exactly the exponents $0,\ldots,\mu_b-1$.
This excludes $Q$. For $z^{xh_*}$ it requires $x\in W_\mu d_0H$,
which excludes every shorter representative by the minimality of $d_0$. Consequently
$$
\operatorname{Alt}_\mu\bigl(\mathscr B_{w_*^{-1}}z^{h_*}\bigr)
=(-v)^{\ell_*}\operatorname{Alt}_\mu(z^{\rho_\mu})
=(-v)^{\ell_*}\mathcal V_\mu.
$$
Since $z^\rho\mathscr R_{w_*}z^{f_*}
=\mathscr B_{w_*^{-1}}z^{h_*}$ and $\kappa$ is block-constant,
$$
\mathcal P_\mu\mathscr R_{w_*}z^{f_*}
=z^{-\kappa}
\frac{\operatorname{Alt}_\mu(z^\rho\mathscr R_{w_*}z^{f_*})}{\mathcal V_\mu}
=(-v)^{\ell_*}z^{-\kappa}.
$$
\end{proof}

After removing the common factor $\delta_{B_m}^{1/2}(d_{f_*})$,
Theorem~\ref{thm:all-bruhat-cells} gives the values of $W_\tau$ and $W_m$
at $d_{f_*}w_*$ as $(-v)^{\ell_*}z^{-\kappa}$ and
$(-v)^{\ell_*}y^{-\kappa}$, respectively. Replacing $\psi_m$ by $\psi_m^{-1}$
does not change values at the points $d_fw$: the conjugating diagonal unit can
be moved to the right into $K_\mu$. Their product, including the cell
measure, is explicitly
$$
\begin{aligned}
q^{\ell_*}\delta_{B_m}^{-1}(d_{f_*})
W_\tau(d_{f_*}w_*)W_m(d_{f_*}w_*)
&=q^{\ell_*}(-v)^{2\ell_*}z^{-\kappa}y^{-\kappa}\\
&=C_\mu(\mathbf z,\mathbf y)\ne0.
\end{aligned}
$$
Thus the Laurent expansion begins with
\begin{equation}\label{eq:rs-leading-term}
Z(X)=C_\mu(\mathbf z,\mathbf y)X^{-\mathcal L_\mu}
+\text{terms of higher degree}.
\end{equation}

\subsection{The functional equation and completion of the proof}

\noindent\textbf{Proof of Theorem~\ref{thm:rs-gln-glm}.}
Multiplicativity and the Steinberg-pair formula
\cite[Theorem~8.2 and Proposition~9.4]{jpss} give the displayed Euler factor and
$$
L_0(s):=L(s,\tau\times\pi_m)=L(s,\pi_n\times\pi_m).
$$
Indeed, only the first row differs. Since $\lambda_1\ge\lambda_c$,
both first-row products run over $1\le j\le\lambda_c$, and
$$
(\chi_1\nu^{e/2})(\varpi)\beta_c
q^{-(\lambda_1+\lambda_c)/2+j}
=\gamma_1\beta_c q^{-(\lambda_1+e+\lambda_c)/2+j}.
$$
The ideal theorem \cite[Theorem~2.7(ii)]{jpss} gives
$P(X):=Z(s)/L_0(s)\in\mathbb C[X,X^{-1}]$.
Since $L_0^{-1}(0)=1$, equation~\eqref{eq:rs-leading-term} implies
$$
\min\deg P=-\mathcal L_\mu,\qquad
[X^{-\mathcal L_\mu}]P=C_\mu(\mathbf z,\mathbf y).
$$

For unramified $\alpha,\beta$, the conductor formula is
$a(\operatorname{St}_A(\alpha)\times\operatorname{St}_B(\beta))
=AB-\min(A,B)$; see
\cite[Introduction, paragraph~2; Section~1.3; Proposition in Section~1.4]
{bushnell-henniart-exponents} for compatibility, additivity, and this formula.
Consequently
$$
a(\tau\times\pi_m)
=m^2-\sum_{a,c}\min(\lambda_a,\lambda_c)
=m^2-\sum_b\mu_b^2=2\mathcal L_\mu,
$$
where $\sum_{a,c}\min(\lambda_a,\lambda_c)
=\sum_{j\ge1}\#\{a:\lambda_a\ge j\}^2$ and the column lengths of $\lambda$
are the decreasing rearrangement of $\mu$. As $\psi_F$ has conductor $\mathcal O$,
$$
\varepsilon(s,\tau\times\pi_m,\psi_F)=cX^{2\mathcal L_\mu},\qquad c\ne0.
$$

To obtain the upper degree bound, put
$$
\widetilde W(g)=W(w_0g^{-t}),\qquad
W^\natural(g)=\widetilde W(gw_0)=W(w_0g^{-t}w_0).
$$
For either of our $K_\mu$-fixed functions, $W^\natural$ has the opposite
Whittaker character and is fixed by $K_{\mu^{\rm rev}}$, where
$\mu^{\rm rev}=(\mu_k,\ldots,\mu_1)$, since
$w_0(K_{\mu^{\rm rev}})^{-t}w_0=K_\mu$.
Lemma~\ref{supportlemma} and Lemma~\ref{lem:rs-minimum} therefore give
the same lower bound $-\mathcal L_\mu$ for the dual integral.
Only the general support lemma is needed here.

With self-dual additive measure $\widehat\varphi=\varphi$. Simultaneous right
translation of both dual functions by $w_0$ preserves the integral,
because $\varphi(e_mgw_0)=\varphi(e_mg)$ and $|\det w_0|=1$.
Thus the equal-rank functional equation
\cite[Theorem~2.7(iii), equation~(2.7.1)]{jpss}, after division by the
two $L$-factors, reads
$$
\widetilde P(q^{-1}X^{-1})=cX^{2\mathcal L_\mu}P(X),
\qquad \min\deg\widetilde P\ge-\mathcal L_\mu.
$$
Here $\widetilde P(Y)$ is the dual integral divided by
$L(s',\tau^\vee\times\pi_m^\vee)$, $Y=q^{-s'}$; its reciprocal
$L$-factor has constant term $1$, so division preserves the lower bound.
The central-character factor at $-1$ is $1$ because the twists are unramified.
The left side has largest exponent at most $\mathcal L_\mu$, hence
$\max\deg P\le-\mathcal L_\mu$. Together with the lowest coefficient above this gives
$$
P(X)=C_\mu(\mathbf z,\mathbf y)X^{-\mathcal L_\mu},
$$
which proves the theorem.\hfill$\Box$

\subsection{Exact test vectors}

The theorem gives a generator of the Rankin--Selberg ideal.
A simultaneous translation and constant rescaling give the $L$-factor itself.

\begin{corollary}[Exact Test Vectors]\label{cor:rs-exact-test-vectors}
Put $h=d_{-\kappa}$, so $|\det h|=q^{\mathcal L_\mu}$, and define
$$
\begin{aligned}
W_n^\sharp(g)&=C_\mu(\mathbf z,\mathbf y)^{-1}q^{-e\mathcal L_\mu/2}
W_n\bigl(g\operatorname{diag}(h,I_e)\bigr),\\
W_m^\sharp(g)&=W_m(gh).
\end{aligned}
$$
Then $Z(s,W_n^\sharp,W_m^\sharp)=L(s,\pi_n\times\pi_m)$.
\end{corollary}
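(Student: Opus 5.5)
The plan is to change variables $g\mapsto gh^{-1}$ in the integral defining $Z(s,W_n^\sharp,W_m^\sharp)$ and reduce to Theorem~\ref{thm:rs-gln-glm}. By definition,
$$
Z(s,W_n^\sharp,W_m^\sharp)=C_\mu^{-1}q^{-e\mathcal L_\mu/2}\int_{N_m\backslash G_m}
W_n\bigl(\operatorname{diag}(gh,I_e)\bigr)W_m(gh)|\det g|^{s-e/2}\,dg .
$$

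The first step is to substitute $g'=gh$. Right translation by the torus element $h$ preserves the right Haar measure on $G_m$, and it commutes with the left $N_m$-quotient. So the quotient measure on $N_m\backslash G_m$ is invariant, with no modulus factor, since $G_m$ is unimodular and the quotient is taken on the left.

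The second step is to account for the determinant factor. We have $|\det g|^{s-e/2}=|\det g'|^{s-e/2}|\det h|^{-(s-e/2)}$. Since $h=d_{-\kappa}$ with $\kappa\ge0$, we get $|\det h|=q^{\sum_i\kappa_i}=q^{\mathcal L_\mu}$. Hence the extra factor is $q^{-s\mathcal L_\mu}q^{e\mathcal L_\mu/2}$.

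The third step is to combine. The $q^{e\mathcal L_\mu/2}$ cancels the normalizing $q^{-e\mathcal L_\mu/2}$, and what remains is
$$
Z(s,W_n^\sharp,W_m^\sharp)=C_\mu(\mathbf z,\mathbf y)^{-1}q^{-s\mathcal L_\mu}Z(s).
$$
Theorem~\ref{thm:rs-gln-glm} gives $Z(s)=C_\mu q^{s\mathcal L_\mu}L(s,\pi_n\times\pi_m)$, and substituting yields $L(s,\pi_n\times\pi_m)$.

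Convergence for $\operatorname{Re}(s)\gg0$ follows because the change of variables is exact on absolutely convergent integrals. The identity then extends as rational functions in $q^{-s}$. Here $W_n^\sharp$ and $W_m^\sharp$ are still Whittaker functions in the same spaces with characters $\psi_n$ and $\psi_m^{-1}$, because right translations preserve the Whittaker models.

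The only point needing care is bookkeeping. One must check that $\operatorname{diag}(g,I_e)\operatorname{diag}(h,I_e)=\operatorname{diag}(gh,I_e)$, which is immediate, and that the sign convention for $\kappa$ really gives $|\det h|=q^{+\mathcal L_\mu}$. Since $|\varpi^{-\kappa_i}|=q^{\kappa_i}$, this holds, matching the statement. I expect no genuine obstacle; the nonvanishing of $C_\mu$, needed to divide by it, was already established in \eqref{eq:rs-leading-term}.
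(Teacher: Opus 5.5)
Your proposal is correct and matches the paper's own proof: the substitution $g'=gh$ preserves the right-invariant quotient measure and produces $|\det h|^{-s+e/2}=q^{-s\mathcal L_\mu}q^{e\mathcal L_\mu/2}$. Combined with the normalizing constant $C_\mu^{-1}q^{-e\mathcal L_\mu/2}$ and Theorem~\ref{thm:rs-gln-glm}, this gives $L(s,\pi_n\times\pi_m)$, with your checks on $|\det h|=q^{\mathcal L_\mu}$ and $C_\mu\neq0$ filling in the details the paper leaves implicit.
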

\begin{proof}
The substitution $g'=gh$ gives
$$
Z(s,W_n^\sharp,W_m^\sharp)
=C_\mu^{-1}q^{-e\mathcal L_\mu/2}|\det h|^{-s+e/2}Z(s)
=L(s,\pi_n\times\pi_m).
$$
\end{proof}

The translated vectors are fixed by the corresponding conjugate parahorics.

For $\mu=(m)$, one has $\mathcal L_\mu=\ell_*=0$, $\kappa=0$, and $Z(s)=L(s)$.
For $\mu=(1,\ldots,1)$ and trivial row twists,
$$
\mathcal L_\mu=\ell_*=\binom m2,\qquad
z_b=q^{b-(n+1)/2},\qquad y_b=q^{b-(m+1)/2},
$$
so the scalar specializes to
$$
\boxed{Z(s)=q^{\binom m2(s+(n+m)/2)-m(m-1)(m+1)/3}
L(s,\operatorname{St}_n\times\operatorname{St}_m).}
$$
The expression in \cite[Theorem~6]{bar-kar-iwahori} agrees with
this formula when interpreted with $\operatorname{vol}(J_m)=1$.
Keeping $\operatorname{vol}(N_m(\mathcal O))=1$, the convention stated at the start of Section~6 of our previous work
requires dividing our formula by the parahoric index
$$
[\GL_m(\mathcal O):K_\mu]
=\frac{\prod_{j=1}^m(q^j-1)}
{\prod_b\prod_{j=1}^{\mu_b}(q^j-1)}.
$$
\endgroup

\section*{Acknowledgement of AI use}
The author used AI tools to assist with editing and checking Sections~1--5,
based on the author's own ideas and prompts. ChatGPT Astra supplied the
arguments in Section~\ref{sec:rankin-selberg-gln-glm} which the author verified and simplified.

\end{document}